\newcommand{\Z}{{\mathbb Z}}
\newcommand{\R}{{\mathbb R}}
\newcommand{\N}{{\mathbb N}}
\renewcommand{\P}{\mathbb{P}}
\newcommand{\E}{{\mathbb E}}
\newcommand{\ind}{{\mathbf 1}}
\newcommand{\eps}{\epsilon}
\newcommand{\eqd}{\stackrel{\mathrm{d}}{=}}
\newcommand{\tod}{\stackrel{\mathrm{d}}{\rightarrow}}
\newcommand{\red}{{\mathcal R}}
\newcommand{\blue}{{\mathcal B}}
\newcommand{\mat}{{\mathcal M}}
\newcommand{\leb}{{\mathcal L}}
\newcommand{\X}{{\mathcal X}}
\newcommand{\A}{{\mathcal S}}
\newcommand{\df}{\textbf}
\newcommand{\bigmid}{\,\big\vert\,}
\newcommand{\Bigmid}{\,\Big\vert\,}
\DeclareMathOperator*{\essinf}{ess\,inf}
\DeclareMathOperator*{\supp}{supp}
\newtheorem{thm}{Theorem}
\newtheorem{lemma}[thm]{Lemma}
\newtheorem{prop}[thm]{Proposition}
\newtheorem{cor}[thm]{Corollary}
\theoremstyle{definition}
\newtheorem{remark}[thm]{Remark}
\newcommand{\rs}{resp.\ }
\newcommand{\e}[1]{\langle #1 \rangle}
\crefname{thm}{Theorem}{Theorems}
\crefname{lemma}{Lemma}{Lemmas}
\crefname{prop}{Proposition}{Propositions}
\crefname{cor}{Corollary}{Corollaries}
\crefname{section}{Section}{Sections}
\crefname{figure}{Figure}{Figures}
\crefname{table}{Table}{Tables}
\crefname{definition}{Definition}{Definitions}
\crefname{construction}{Construction}{Constructions}
\newcommand\bigpar[1]{\bigl(#1\bigr)}
\title{Minimal matchings of point processes}
\date{12 December 2020}
\author{Alexander E.~Holroyd}
\address{Alexander E.~Holroyd, School of Mathematics, University of Bristol, Bristol BS8~1UG, United Kingdom}
\email{a.e.holroyd@bristol.ac.uk}
\author{Svante Janson}
\address{Svante Janson, Department of Mathematics, Uppsala University, PO Box 480,
SE-751~06 Uppsala, Sweden}
\email{svante.janson@math.uu.se}
\author{Johan W\"astlund}
\address{Johan W\"astlund, Department of Mathematical Sciences, Chalmers University of Technology, SE-412~96 Gothenburg, Sweden}
\email{wastlund@chalmers.se}
\thanks{Funded in part by
the Knut and Alice Wallenberg Foundation (SJ and AEH) and the Royal Society (AEH)}
\keywords{Matching, Poisson process, point process, stationary process}
\subjclass[2010]{60D05, 60G55, 05C70}
\begin{document}

\begin{abstract}
  Suppose that red and blue points form independent homogeneous Poisson processes of equal intensity in $\R^d$.  For a positive (respectively, negative) parameter $\gamma$ we consider red-blue matchings that locally minimize (respectively, maximize) the sum of $\gamma$th powers of the edge lengths, subject to locally minimizing the number of unmatched points. The parameter can be viewed as a measure of fairness. The limit $\gamma\to-\infty$ is equivalent to Gale-Shapley stable matching.  We also consider limits as $\gamma$ approaches $0$, $1-$, $1+$ and $\infty$.  We focus on dimension $d=1$.  We prove that almost surely no such matching has unmatched points. (This question is open for higher $d$).  For each $\gamma<1$ we establish that there is almost surely a unique such matching, and that it can be expressed as a finitary factor of the points.  Moreover, its typical edge length has finite $r$th moment if and only if $r<1/2$.  In contrast, for $\gamma=1$ there are uncountably many matchings, while for $\gamma>1$ there are countably many, but it is impossible to choose one in a translation-invariant way.  We obtain existence results in higher dimensions (covering many but not all cases).  We address analogous questions for one-colour matchings also.
\end{abstract}

\maketitle

\section{Introduction}

Let $R$ and $B$ be discrete subsets of a metric space.  We call their elements \df{red points} and \df{blue points} respectively.  We are primarily interested in random infinite sets, and in particular the case where $R$ and $B$ are independent homogeneous Poisson processes of equal intensity on $\R^d$.  We will focus especially on dimension $d=1$.

A \df{$2$-colour perfect matching} of $R$ and $B$ is a set $M$ of ordered pairs $\e{r,b}$, called \df{edges}, with $r\in R$ and $b\in B$, such that each red or blue point belongs to exactly one edge.  We wish to address matchings that minimize the distances between matched pairs.  To make this precise, one must decide on the relative weighting of short versus long edges, and also make sense of minimization of infinitely many variables together.

We therefore introduce the following concept.  Suppose $R,B\subseteq\R^d$ and let $\gamma\in(0,\infty)$ be a parameter.  We say that a perfect matching $M$ is \df{$\gamma$-minimal} if for every \emph{finite} set of edges $\{\e{r_1,b_1},\ldots,\e{r_n,b_n}\}\subseteq M$ we have
\begin{equation}\label{mm}
\sum_i |r_i-b_i|^\gamma =\min_{\sigma} \sum_i |r_i-b_{\sigma(i)}|^\gamma,
\end{equation}
where $|\cdot|$ denotes the Euclidean norm, and the minimum is over all permutations $\sigma$ of $1,\ldots,n$.  In other words, $M$ \emph{locally} minimizes the total \emph{cost} given by the sum of $\gamma$th powers of the edge lengths.

We also define \df{$\gamma$-minimal} matchings for negative $\gamma\in(-\infty,0)$ in the same way, except that, since $x\mapsto x^\gamma$ is decreasing, we replace $|\cdot|^\gamma$ with $-|\cdot|^\gamma$ on both sides of \eqref{mm} (which is equivalent to replacing ``$\min$" with ``$\max$'').  We also consider matchings given by replacing $|\cdot|^\gamma$ with $\log|\cdot|$ in \eqref{mm}.  We call such matchings \df{$0$-minimal}, because they can be interpreted via the limit $\gamma\to 0$ (applied to finite sets of edges).   One could choose to minimize other functions of distance, but it turns out that powers and logarithms are the only scale-invariant choices.  (We spell out and prove this and other such claims in \cref{defs}.)

We can imagine each point as an agent that wants a partner as close as possible.  The parameter $\gamma$ can then be interpreted as a measure of fairness or altruism.  For large $\gamma$, long edges are heavily penalized, so that costs tend to be shared evenly among many points.  In contrast, for small $\gamma$, points that can match close by tend to do so selfishly, regardless of impact on others.

We will also consider \df{$\infty$-minimal} and \df{$(-\infty)$-minimal} matchings, which arise as the limits $\gamma\to\pm\infty$.  Here, any finite set of edges is required to minimize the length of the longest or the shortest edge, respectively.  Subject to a regularity condition on the point sets (satisfied almost surely by Poisson processes), a matching is ($-\infty$)-minimal if and only if it is \df{stable}, which is defined to mean that there do not exist a red-blue pair that are both strictly closer to each other than their partners.  Stable matching was introduced in the celebrated work of Gale and Shapley \cite{gs} and has been studied in the context of point processes in \cite{hpps} and a number of subsequent articles.  
At the other extreme, we sometimes call $\infty$-minimal matchings \df{altruistic}.  Under stable matching, a point exclusively pursues its self-interest, whereas in the altruistic case it promotes the needs of any less fortunate point.

%
The case $\gamma=1$ in dimension $d=1$ is special in that there are many ties: for any $r,r'\in R$ and $b,b'\in B$ with $r<r'<b'<b$, the costs $|r-b|+|r'-b'|$ and $|r-b'|+|r'-b|$ of the two possible matchings are equal.  It is therefore natural to consider the limits $\gamma\uparrow 1$ and $\gamma\downarrow 1$, which amounts to insisting that such ties are always resolved in favour of the first or the second matching respectively.  We call the resulting matchings \df{$(1-)$-minimal} and \df{$(1+)$-minimal} respectively.  All $(1-)$- and $(1+)$-minimal matchings are also $1$-minimal.

See \cref{1d,2d} for pictures of minimal matchings of random points on bounded regions.  A primary motivation is to understand when such pictures can be meaningfully extended to infinite space.

\begin{figure}\centering
  \includegraphics[height=.49\textwidth]{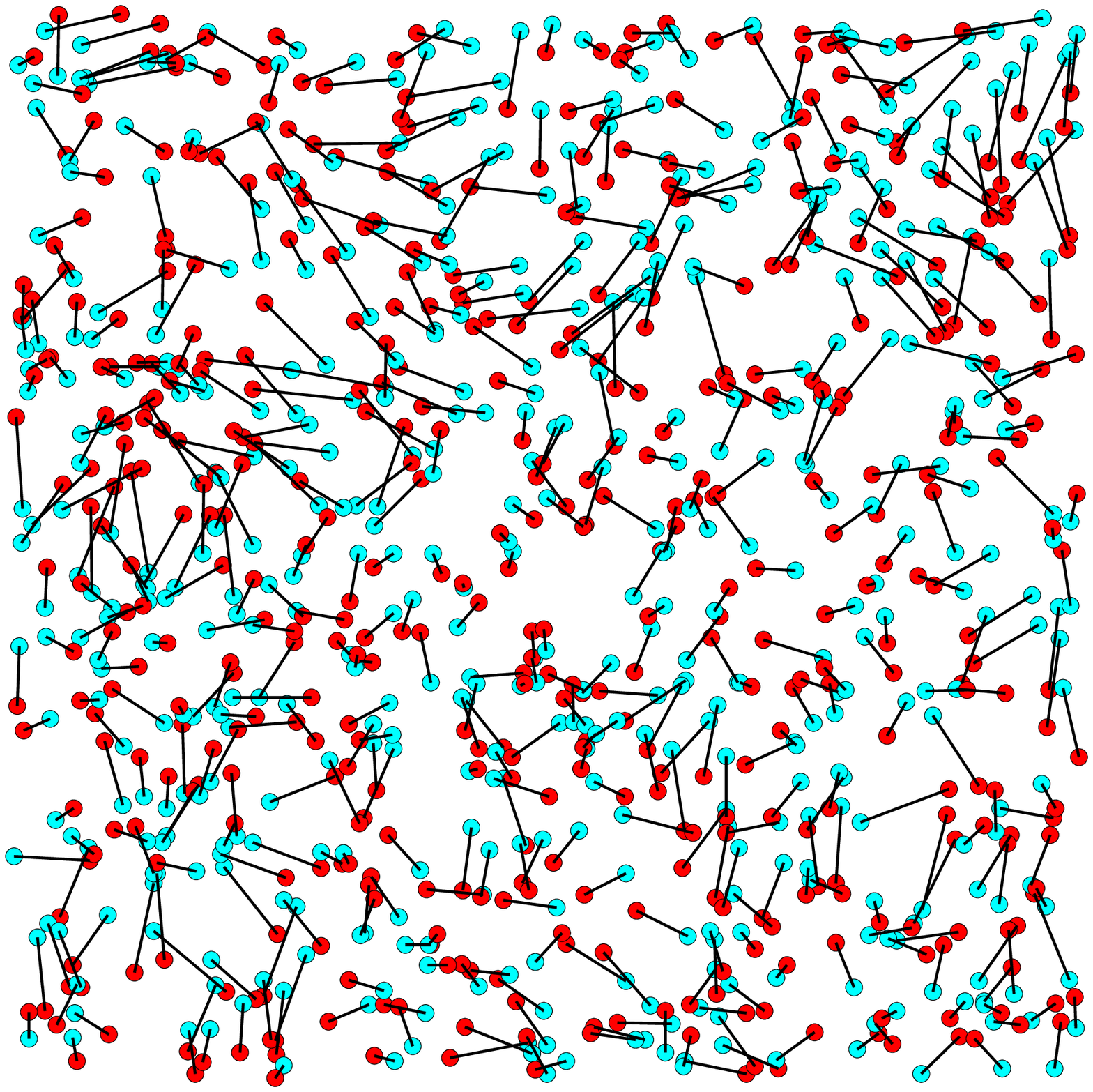}\hfill%
  \includegraphics[height=.49\textwidth]{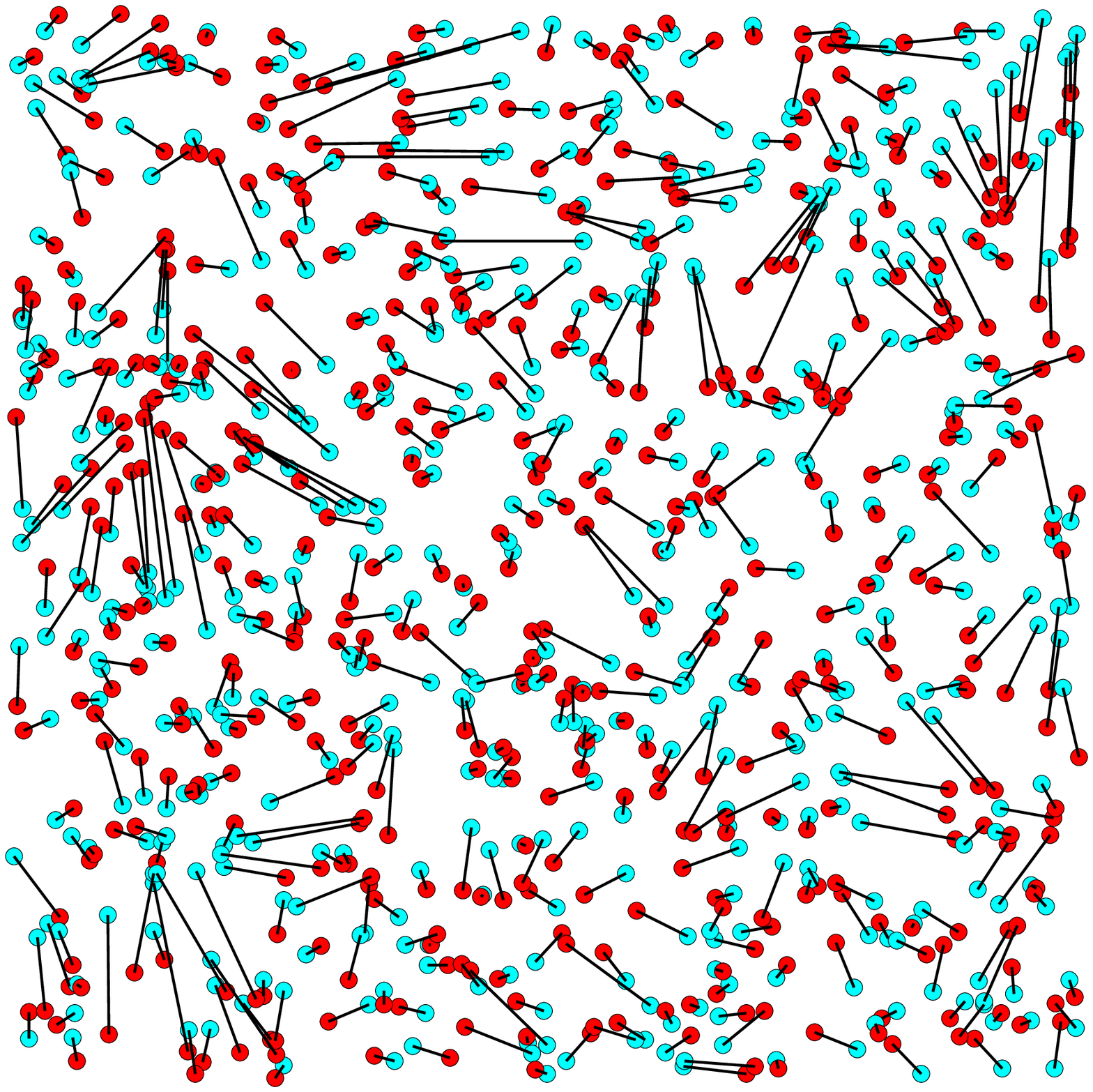}\\[1ex]
  \includegraphics[height=.49\textwidth]{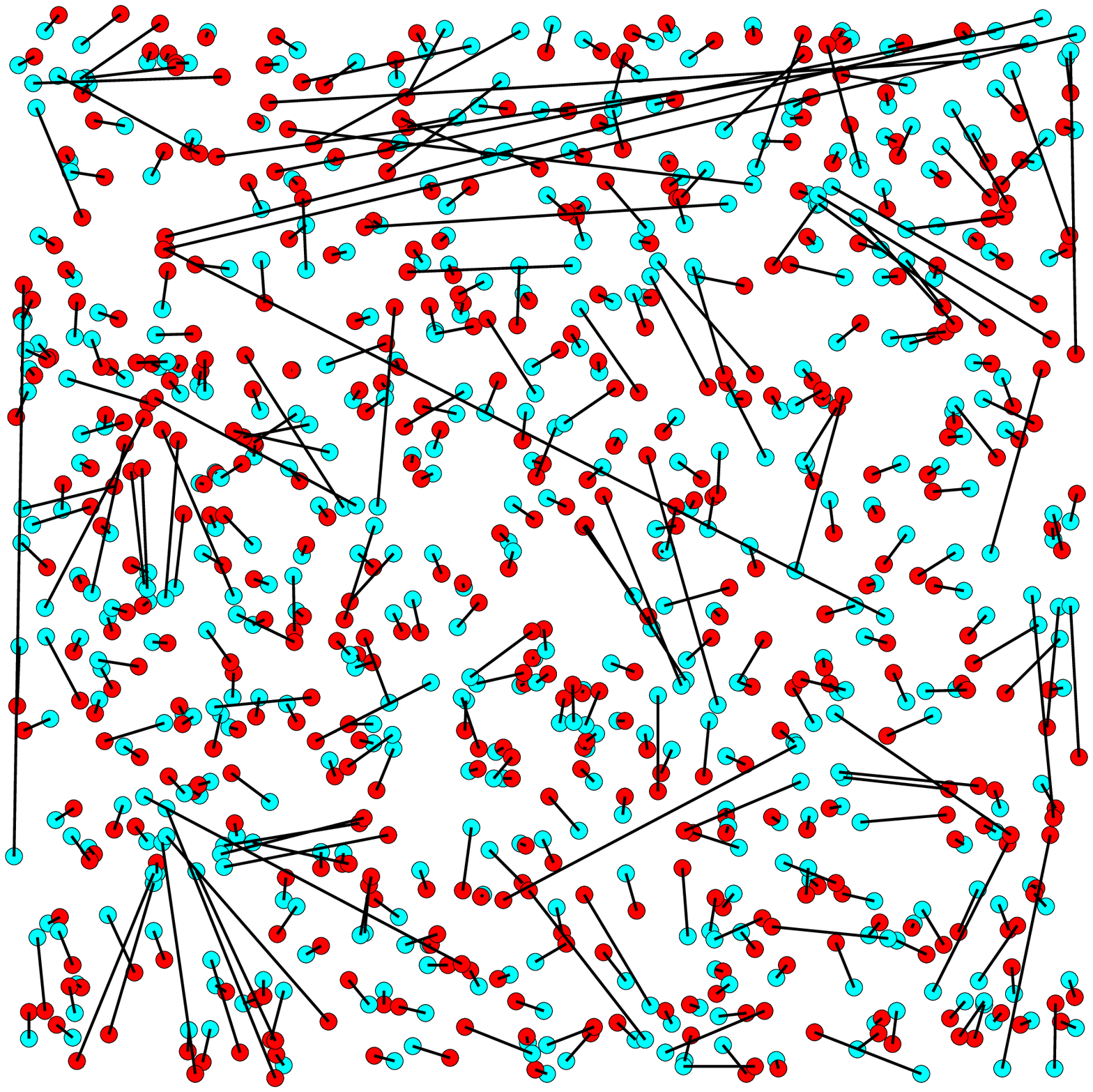}

  \caption{Uniformly random red and blue points in equal numbers on a square, together with $\gamma$-minimal matchings for $\gamma=\infty$ (altruistic; top-left), $1$ (top-right), and $-\infty$ (stable; bottom).  Of these, only the last is known to exist on the infinite plane.}\label{2d}
\end{figure}

\begin{figure}\centering
  \includegraphics[width=.9\textwidth]{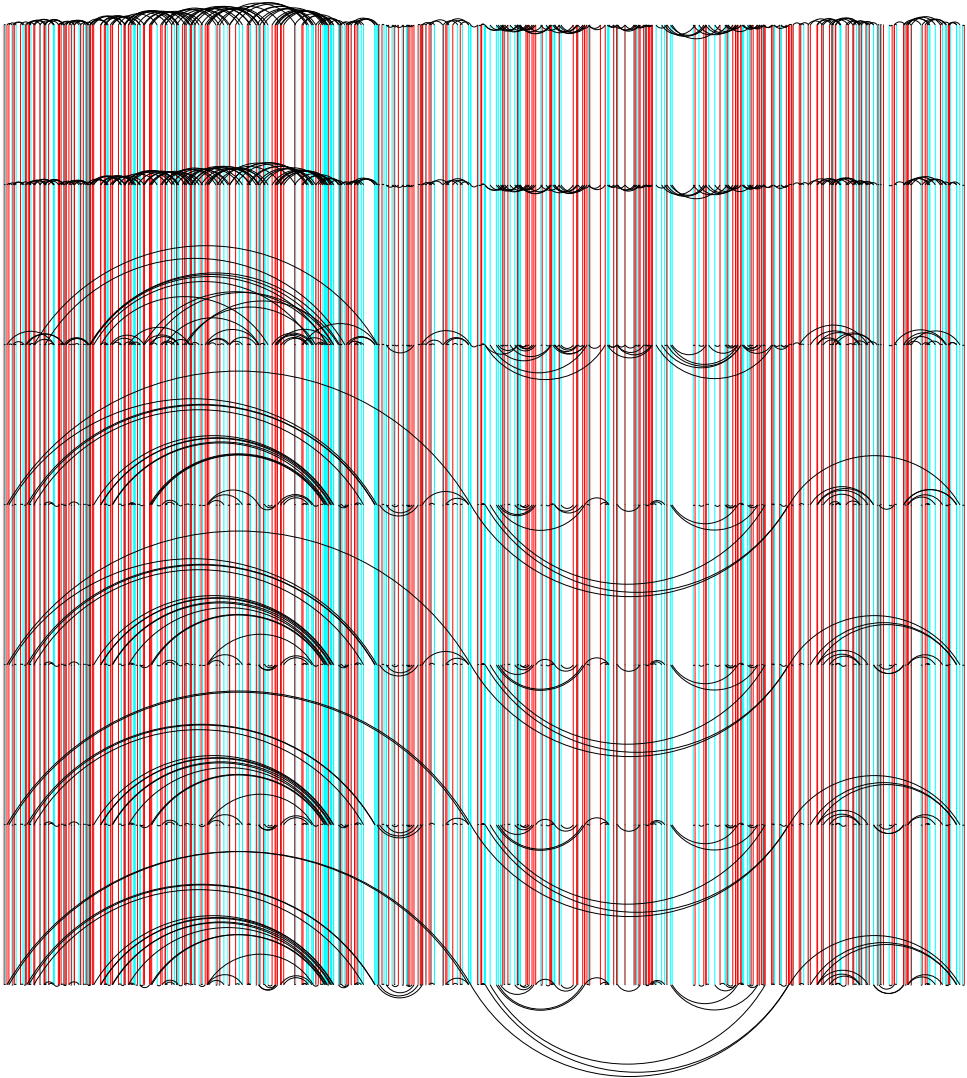}
  \caption{Uniformly random red and blue points in equal numbers on an interval, together with $\gamma$-minimal matchings for $\gamma=3,2,1,0,-1,-2,-3$ (top to bottom).  The points are identical for each matching, and are shown as vertical lines.  Edges are shown as upward or downward arcs depending on whether the red or blue point is on the left.}\label{1d}
\end{figure}

We will address the questions: when do $\gamma$-minimal matchings exist?  When are they unique?  When are they translation-invariant?  When can they be constructed locally, and when is extra randomness needed to do so?  How do the typical edge lengths behave?  These questions are natural under the interpretation of points as agents: given a societal choice about how much fairness is appropriate (i.e.\ a choice of $\gamma$), does an optimal solution exist?  If so, are there multiple solutions (leading to potential conflict)?  Are there solutions that treat all locations equally?  Can a solution be obtained by local procedures, without recourse to a central authority?  Such questions are of interest far more broadly, but matching provides a clean mathematical setting in which answers are already intricate and subtle.  Our results indicate that fairness tends to lead to difficulties: larger $\gamma$ means that the set of minimal matchings is less well-behaved.

Besides the $2$-colour case introduced above, we consider \df{$1$-colour} matching.  A perfect $1$-colour matching of $R$ is a set $M$ of unordered pairs in which each point of $R$ appears exactly once.  All the above definitions apply analogously to the $1$-colour case.

We introduce one further complication.  Unless stated otherwise we allow \df{partial} matchings, in which each point belongs to \emph{at most} one edge. Points that belong to no edge are \df{unmatched}.  A matching is \df{perfect} if it has no unmatched points.  We extend the definition of $\gamma$-minimality to a partial matching $M$ by declaring each unmatched point to have infinite cost (where infinities add to give strictly larger infinities in the manner of infinite ordinals).  More precisely, we insist that for any finite set of edges together with any finite set of unmatched points, the restriction of $M$ to the incident and unmatched points has as few unmatched points as possible, and then subject to that constraint, the total cost of the edges is minimized (as before).
Our first main result is that in fact unmatched points never occur in dimension $d=1$.

\begin{thm}[Perfectness]\label{perfect}
 Fix any $\gamma\in[-\infty,\infty]\cup\{1-,1+\}$.
  Let $R$ be a Poisson process of intensity~$1$ on $\R$.  (Respectively, let $R$ and $B$ be independent Poisson processes of intensity~$1$ on $\R$.) Almost surely, every $\gamma$-minimal $1$-colour (respectively, $2$-colour) matching of $R$ (respectively, $R$ and $B$) is perfect.
\end{thm}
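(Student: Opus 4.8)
The plan is to separate a deterministic part, valid for every realisation, from a probabilistic part that uses the recurrence and balance of the Poisson processes. I treat the $2$-colour case in detail, the $1$-colour case being analogous (and easier) as indicated below; throughout I use that, almost surely, only finitely many edges cross each fixed point, so the quantities below are well defined. The first, deterministic, step is that in any $\gamma$-minimal matching $M$ all unmatched points have the same colour. Indeed, were there an unmatched red $r$ and an unmatched blue $b$, I would invoke the definition of $\gamma$-minimality with the empty set of edges and the finite set of unmatched points $\{r,b\}$: the restriction of $M$ to $\{r,b\}$ leaves both unmatched, whereas the matching $\{\e{r,b}\}$ of the same two points leaves none, contradicting local minimality of the number of unmatched points. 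Fixing $M$, I may thus assume that every unmatched point is red.

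Next I would set up a bookkeeping identity on $\R$. For $x<y$ set $D(x)=R((0,x])-B((0,x])$ (with the usual sign convention for $x<0$), and for $p\notin R\cup B$ let $\Phi(p)$ be the number of edges $\e{r,b}\in M$ with $r<p<b$ minus the number with $b<p<r$, i.e.\ the net red-to-blue flux across $p$. Counting the red and blue points of $(x,y]$ and sorting the incident edges into internal ones and the four kinds of boundary-crossing ones yields
\[
U(x,y] \;=\; \bigl(D(y)-D(x)\bigr)-\bigl(\Phi(y)-\Phi(x)\bigr),
\]
where $U(x,y]\ge 0$ counts the unmatched (red) points in $(x,y]$. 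Hence $\Psi:=\Phi-D$ is non-increasing, and the total number of unmatched reds equals $\Psi(-\infty)-\Psi(+\infty)$. This identity also explains why a purely local argument cannot finish the proof: for every finite window the incident point set has red-excess exactly equal to its number of unmatched reds, so no finite re-matching can reduce the unmatched count. Perfectness must therefore be extracted from the global structure of the process, which is why the result is special to $d=1$.

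For the probabilistic step I argue by contradiction. The event that some $\gamma$-minimal matching has an unmatched point is translation-invariant, so by ergodicity of the Poisson process it has probability $0$ or $1$; assume it is $1$. When $\gamma<1$ the matching is unique (stated in the introduction), so the unmatched reds form a stationary point process, whose intensity is either $0$ — forcing it to be empty almost surely, as desired — or some $\rho>0$. In the latter case the ergodic theorem gives $\Psi(x)/x\to-\rho$, while the strong law for the walk $D$ gives $D(x)/x\to 0$, whence $\Phi(x)/x\to-\rho$, so the net number of edges crossing a typical point grows linearly. I would contradict this using the recurrence of $D$ (namely $\liminf_{x\to\infty}D(x)=-\infty$ and $\limsup_{x\to\infty}D(x)=+\infty$, and likewise as $x\to-\infty$) together with an almost-sure sublinear bound $\Phi(x)=o(x)$ forced by $\gamma$-minimality. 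The $1$-colour case is parallel but simpler: pairing any two unmatched points shows that each matching has at most one unmatched point, and a single translation-invariant defect is impossible; here the parity of $R((0,x])$ plays the role of $D$.

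I expect the main obstacle to be precisely this global input, in two respects. First, since the typical edge length is not integrable (finite $r$th moment only for $r<1/2$), the number of edges crossing a point has infinite mean, so the sublinear flux bound $\Phi(x)=o(x)$ cannot be read off from any first-moment computation; it must be wrung from the recurrent, sublinear fluctuations of $D$ and the fine structure of minimal matchings — for instance, for $\gamma<1$ one can exhibit recurrent cut points at which $\Phi$ vanishes, so that $\Psi(p)=-D(p)$ there, and then the monotonicity of $\Psi$ collides directly with the recurrence of $D$. Second, for $\gamma\ge 1$ there is provably no translation-equivariant way to select a $\gamma$-minimal matching, so the clean reduction to a stationary unmatched-point process is unavailable; it must be replaced by a mass-transport or ergodic argument carried out directly on the translation-invariant event that an unmatched point exists.
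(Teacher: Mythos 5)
Your proposal contains genuine gaps, and the two places you flag as ``obstacles'' are exactly where the whole proof has to happen. First, a false premise: you assume that almost surely only finitely many edges cross each fixed point, so that the flux $\Phi(p)$ is well defined. This fails for precisely the matchings at issue: for $\gamma<1$ the minimal matching is locally \emph{infinite} (\cref{2colclass}(iv), \cref{locinf}), and indeed every point is crossed by infinitely many edges of each orientation (\cref{nest}), so $\Phi$ is an $\infty-\infty$ expression and the bookkeeping identity and monotonicity of $\Psi$ have no content. Second, circularity: for $\gamma<1$ you invoke uniqueness of the minimal matching to obtain a stationary process of unmatched points, but uniqueness (\cref{2colclass}(iv)) is proved in the paper \emph{using} perfectness, so it cannot be an input here. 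Third, the decisive steps are simply absent, as you acknowledge: the sublinear bound $\Phi(x)=o(x)$ ``forced by $\gamma$-minimality'' is never established (and, given local infiniteness, is not even well posed), and for $\gamma\ge 1$ you offer no argument beyond saying that a mass-transport argument is needed. A further structural point: the theorem quantifies over \emph{all} (uncountably many) minimal matchings, not over invariant matching schemes, so stationarity of ``the'' unmatched points cannot be used unless one first constructs an invariant selection --- which for $\gamma>1$ provably does not exist.

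The paper's route avoids all of these difficulties and is worth internalizing. The set of \emph{potentially} unmatched points --- points unmatched in \emph{some} minimal matching --- is a deterministic, translation-equivariant function of the point configuration, hence an ergodic invariant point process; if it is not a.s.\ empty it is infinite, and by colour symmetry it contains points of both colours. The key step (\cref{shorter}) is then deterministic: if $x$ and $y$ are potentially unmatched (of opposite colours) in minimal matchings $M$ and $N$ respectively, the component of $x$ in the multigraph $M\cup N$ is a path; it cannot be finite, by local uniqueness (\cref{coincidence}), and along the infinite path the minimality of both $M$ and $N$ yields telescoping inequalities forcing every consecutive distance to be at most $|x-y|$. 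An infinite sequence of points with uniformly bounded consecutive gaps is a.s.\ impossible for a Poisson process on $\R$, since every point is a.s.\ flanked by arbitrarily long empty intervals; this gives the contradiction, uniformly over all minimal matchings and for all $\gamma$ at once, with no flux accounting, no uniqueness, and no case split at $\gamma=1$.
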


The conclusion of \cref{perfect} is stronger and more subtle than it might at first appear.  Indeed, for random $R$ (and $B$) we call a \emph{random} matching $M$ on a joint probability space with $R$ (respectively, and $B$) a \df{matching scheme}, and we say that it is \df{invariant} if the joint law of $(R,M)$ (respectively $(R,B,M)$) is invariant under the action of every translation of $\R^d$.  Standard arguments imply relatively easily that any invariant matching scheme is a.s.\ perfect, for any $d$.  However, $\gamma$-minimal matchings need not be invariant (as we shall see).  Proving \cref{perfect} requires ruling out all the uncountably many possible non-perfect matchings simultaneously, for almost every choice of $R$ and $B$.  Note that we do not know whether the same conclusion holds in $\R^d$ for $d\geq 2$.

Our next goal is to classify the set of all $\gamma$-minimal matchings
according to the value of $\gamma$.  We start with the $2$-colour case in
dimension $d=1$, where our picture is essentially complete.  It turns out
that there is a pronounced change in behavior at $\gamma=1$.  We also
address matching schemes, and besides invariance we distinguish the
following properties.  We say that a matching scheme is a
\df{factor} if it is invariant, and if $M$ can be expressed as a deterministic function of $R$ (respectively, of $(R,B)$).
We say that an edge \df{crosses} a set $S$ if the closed line segment joining its endpoints intersects $S$.  We call a matching \df{locally finite} if every bounded set is crossed by only finitely many edges; otherwise it is \df{locally infinite}.

\begin{samepage}
\begin{thm}[$2$-colour classification]\label{2colclass}
  Let $R$ and $B$ be independent Poisson processes of intensity~$1$ on $\R$, and fix $\gamma$.  Almost surely, the set of all $\gamma$-minimal $2$-colour matchings of $R$ and $B$ is as follows.
  \begin{enumerate}[\rm (i)]
    \item Let $\gamma\in\{1+\}\cup(1,\infty]$.  The $\gamma$-minimal matchings form a countable family $(M^k)_{k\in\Z}$.  Each $M^k$ is locally finite.  There is no invariant $\gamma$-minimal matching scheme.
    \item Let $\gamma=1$. There are uncountably many $\gamma$-minimal matchings.  There are uncountably many factor $\gamma$-minimal matching schemes.
    \item Let $\gamma=1-$.  The $\gamma$-minimal matchings consist of a countable family $(M_k)_{k\in\Z}$ of locally finite matchings together with two locally infinite matchings $M_\infty$ and $M_{-\infty}$.  The only factor schemes are $M_\infty$ and $M_{-\infty}$, and the only invariant schemes are these two and mixtures 
        of them.
    \item Let $\gamma\in[-\infty,1)$.  There is exactly one $\gamma$-minimal matching $M$. It is locally infinite and a factor.
  \end{enumerate}
\end{thm}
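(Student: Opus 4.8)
The plan is to reduce the infinite optimization defining $\gamma$-minimality to purely local conditions, and then read off the global structure from the signed counting walk of the two processes. First I would show that a perfect matching is $\gamma$-minimal precisely when every \emph{pair} of its edges is locally optimal, i.e.\ satisfies the two-point swap inequality obtained by restricting \eqref{mm} to $n=2$. The engine is the sign of the four-point quantity $\Delta=|r-b|^\gamma+|r'-b'|^\gamma-|r-b'|^\gamma-|r'-b|^\gamma$ for $r<r'$, which is governed by the convexity or concavity of $t\mapsto t^\gamma$. A direct computation yields the dichotomy: for $\gamma>1$ (and, by the tie-breaking convention, for $\gamma=1+$) the swap inequality forces the matching to be \emph{monotone}, meaning that $r<r'$ implies $b<b'$ for the partners; for $\gamma<1$ it forces the matching to be \emph{non-crossing} and, among non-crossing matchings, locally shortest; at $\gamma=1$ the quantity $\Delta$ vanishes in the crossing configuration, producing ties, and the $1-$ convention resolves every such tie toward the non-crossing option while still demanding length-minimality. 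I would also verify that pairwise optimality of a monotone (resp.\ non-crossing) matching upgrades to optimality of every finite sub-collection, so these local conditions are genuinely equivalent to $\gamma$-minimality.

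Next I would establish rigidity through the symmetric difference $M\triangle M'$ of two $\gamma$-minimal matchings, which decomposes into finite alternating cycles and bi-infinite alternating paths. On a finite cycle the two matchings realize two competing matchings of the same finite point set; strict convexity or concavity ($\gamma\neq1$) makes the optimum unique, so no finite cycle can occur. It then remains to classify bi-infinite alternating paths. For $\gamma>1$ and $\gamma=1+$ the monotone matchings are exactly the index shifts $r_i\mapsto b_{i+k}$, where $(r_i)$ and $(b_i)$ enumerate $R$ and $B$ in increasing order, and consecutive shifts differ by a single bi-infinite path; this yields the countable family $(M^k)_{k\in\Z}$ of part (i). For $\gamma<1$ I would show that an improving or cost-neutral bi-infinite path cannot coexist with minimality, because a long truncation of such a path admits a strictly improving finite swap; this gives the uniqueness in part (iv).

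The remaining, more probabilistic, properties follow from the walk $D(x)=\#\bigpar{R\cap(-\infty,x]}-\#\bigpar{B\cap(-\infty,x]}$. Equal intensity makes $D$ a mean-zero process that is a.s.\ recurrent, with $\limsup D=+\infty$ and $\liminf D=-\infty$, while the law of large numbers gives $r_i=i+o(i)$ and $b_i=i+o(i)$. For the monotone family this yields $|r_i-b_{i+k}|=o(i)$, so each $M^k$ is locally finite. For the non-crossing matchings of parts (iii)--(iv) recurrence forces infinitely many nested edges to enclose any fixed point, giving local infiniteness of the $\gamma<1$ matching and of the two extremal matchings $M_{\pm\infty}$, obtained by matching every point to a partner on its right, resp.\ left. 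Existence throughout is guaranteed because \cref{perfect} rules out unmatched points, so the local construction always completes to a perfect matching.

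Finally I would settle invariance. Uniqueness makes the $\gamma<1$ matching automatically translation-equivariant, hence a factor, and the a.s.\ finiteness of each enclosing excursion makes it finitary. For $\gamma>1$ and $\gamma=1+$ the family $(M^k)_{k\in\Z}$ is a $\Z$-torsor under relabelling, and a mass-transport argument shows that no index can be selected invariantly, ruling out invariant schemes. At $\gamma=1$ the abundance of ties lets one build uncountably many measurable equivariant tie-breaking rules, hence uncountably many factor schemes. At $\gamma=1-$ I would show, again by mass transport, that an invariant scheme assigns probability zero to each locally finite $M_k$, so only the two equivariant matchings $M_{\pm\infty}$ survive and any mixture of them is invariant. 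I expect the main obstacle to be exactly these global steps: proving uniqueness for $\gamma<1$ by excluding improving bi-infinite modifications with only averaged control on edge lengths; pinning down the precise families at the delicate boundary $\gamma=1\pm$, where the concave-optimal and length-optimal matchings genuinely differ, so that parts (iii) and (iv) are discontinuous neighbours; and the mass-transport and ergodic-decomposition arguments underlying the (non)existence and mixture classification of invariant schemes.
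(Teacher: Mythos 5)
Your outline for the supercritical and critical-tie cases is broadly sound: for $\gamma>1$ and $\gamma=1+$ the pairwise swap inequalities do reduce to order-preservation, the order-preserving perfect matchings are exactly the shifts $(M^k)_{k\in\Z}$, and the tie-based construction of uncountably many factor schemes at $\gamma=1$ is essentially the paper's argument. But there are two genuine gaps, one of which sits at the heart of part (iv). First, your opening claim that pairwise (two-edge) swap-optimality is equivalent to $\gamma$-minimality is false for $\gamma<1$: there the pairwise conditions (no entwined edges, plus a \emph{distance-dependent} choice between separate and straddling for each pair) do not upgrade to optimality of finite sub-collections --- concave costs admit $2$-swap-optimal matchings that are not optimal, and nothing in your sketch proves the upgrade. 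Second, and more seriously, your uniqueness argument for $\gamma<1$ does not work. The symmetric difference of two perfect $\gamma$-minimal matchings indeed has no finite cycles (that part is fine, by \cref{coincidence}), so everything hinges on excluding bi-infinite alternating paths. Your proposed mechanism --- ``a long truncation of such a path admits a strictly improving finite swap'' --- fails because a truncation of a bi-infinite alternating path is \emph{not} a compatible set: its two endpoints have partners outside the truncation, so minimality of the two matchings yields no cost comparison between the truncated pieces. What minimality does yield is exactly the one-sided inequalities in the paper's \cref{shorter} (costs along the path are dominated by the initial cost), and these produce no contradiction. Note also that bi-infinite path differences are precisely how the distinct minimal matchings $M^k$ and $M^{k+1}$ differ when $\gamma>1$, so any exclusion argument for $\gamma<1$ must use concavity in a quantitative way; and the paper explicitly leaves $1$-colour uniqueness for $\gamma\in(-\infty,1)$ open, which a soft path-exclusion argument of your type would settle as well.

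The paper's actual route to part (iv) supplies exactly the quantitative input your sketch lacks: quasistability (\cref{quasi}: in any minimal matching, $|x-M(x)|\wedge|y-M(y)|\leq\kappa|x-y|$ for opposite-colour points) combined with a random-walk estimate (\cref{intervals}: a.s.\ there is $Y$ with $W>0$ on $[-aY,-Y]\cup[Y,aY]$, $a=2\kappa+1$). Together with the level decomposition (\cref{levels}) this traps the level of the first point right of the origin inside $(-Y,Y)$ and forces that finite set to be matched to itself in \emph{every} minimal matching, whereupon finite local uniqueness (\cref{coincidence}) finishes the proof. Nothing in your proposal plays the role of \cref{quasi} or \cref{intervals}. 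Separately, several of your invariance claims (no invariant scheme in (i), concentration on $M_{\pm\infty}$ in (iii), local infiniteness in (iv)) are asserted via unspecified mass-transport arguments; in the paper all of these follow from the single external input \cref{locinf} (invariant perfect schemes on $\R$ are locally infinite), and your part (iii) also omits the adjacent-level consistency argument showing the $(1-)$-minimal matchings are exactly $(M_k)_{k\in\Z\cup\{\pm\infty\}}$. Those are repairable sketchiness; the uniqueness step is not.
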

\end{samepage}

We sometimes call the parameter regimes $\gamma>1$ \df{supercritical}, $\gamma\in\{1,1\pm\}$ \df{critical}, and $\gamma<1$ \df{subcritical}.

We next compare the matchings for different $\gamma$.  As suggested by \cref{1d}, for all $\gamma>1$ the sets of matchings are identical, while for $\gamma<1$ they are closely related. Given two matchings $M$ and $M'$ of the same set(s) of points, their union forms the edge-set of a multigraph.  If each of its components is finite then we say that $M$ and $M'$ have \df{finite differences}.

\begin{thm}[$2$-colour comparisons]\label{2colcomp}
    Let $R$ and $B$ be independent Poisson processes of intensity~$1$ on $\R$, and consider their $2$-colour matchings.
    \begin{enumerate}[\rm (i)]
      \item Almost surely, the set of $\gamma$-minimal matchings is identical to the set of $\gamma'$-minimal matchings for all $\gamma,\gamma'\in \{1+\}\cup(1,\infty]$.
      \item Let $\gamma\in[-\infty,1)$ and $\gamma'\in[-\infty,1)\cup\{ 1-\}$.  Almost surely, the $\gamma$-minimal matching and any $\gamma'$-minimal matching have finite differences.
    \end{enumerate}
\end{thm}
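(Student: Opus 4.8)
The plan is to handle the two parts by quite different means: part (i) reduces to a $\gamma$-independent statement about finite configurations, whereas part (ii) is an exchange argument on the symmetric difference whose only real difficulty is an infinite one. For part (i) I would reduce $\gamma$-minimality to a single combinatorial condition that makes no reference to $\gamma$. By \eqref{mm}, $M$ is $\gamma$-minimal exactly when, for every finite set of its edges, the induced assignment of the incident red to the incident blue points is cost-minimal among all assignments. For equally many red and blue points on $\R$ and any strictly convex increasing cost — in particular $x\mapsto x^\gamma$ with $\gamma\in(1,\infty)$, and the bottleneck objective $\max$ corresponding to $\gamma=\infty$ — the rearrangement (Monge) inequality shows the unique minimizer is the \emph{monotone} assignment matching the sorted reds to the sorted blues, and, crucially, this minimizer is the same for all such costs: any non-monotone pair of edges can be uncrossed, which strictly lowers every convex cost and does not increase the maximum, the strictness holding almost surely because the Poisson coordinates and gaps are almost surely distinct. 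Hence, for every $\gamma\in(1,\infty]$, ``$M$ is $\gamma$-minimal'' is equivalent to the single requirement that every pair of edges of $M$ be monotone, so the set of $\gamma$-minimal matchings is the same throughout $(1,\infty]$. For $\gamma=1+$ the monotone assignment is precisely the one selected among the $\gamma=1$ optima in the limit $\gamma\downarrow1$, so $(1+)$-minimality is the same condition; this yields (i), the common family being that of \cref{2colclass}(i).

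For part (ii), \cref{perfect} gives that $M$ and $M'$ are almost surely perfect, so every point meets one edge of each and $M\cup M'$ is a disjoint union of finite even alternating cycles and bi-infinite alternating paths. Thus finite differences is equivalent to the absence of bi-infinite paths, and this is all that must be shown. Finite cycles are expected and harmless: on such a cycle $M$ and $M'$ are different matchings of the same points, each optimal for its own exponent, which is perfectly consistent when $\gamma\neq\gamma'$ and is exactly the mechanism producing nonempty differences. I would also record a structural fact from the $n=2$ case of \eqref{mm}: since $x\mapsto x^\gamma$ is concave for $\gamma<1$ (and likewise in the $\gamma=1-$ limit), every pair of edges is uncrossed into the \emph{non-crossing} arrangement, so each of $M$ and $M'$ is a non-crossing matching as an arc diagram.

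To exclude a bi-infinite path $P=(\dots,v_{-1},v_0,v_1,\dots)$, along which both the matchings and (necessarily) the colours alternate, I would compare across finite windows. Applying \eqref{mm} to the $M$-edges inside a window bounded by $v_a$ and $v_b$ shows that $\sum(M\text{-edges})^\gamma$ there is at most $\sum(\text{interior }M'\text{-edges})^\gamma+|v_a-v_b|^\gamma$, the last ``cap'' edge being admissible because $v_a,v_b$ have opposite colours; the symmetric inequality holds for $M'$ with exponent $\gamma'$. Concavity is what one wants: a long edge is cheap relative to its length, so whenever the cap can be made short the interior $M'$-edges undercut the $M$-edges and contradict minimality of $M$ (or, symmetrically, of $M'$). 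When $P$ is spatially recurrent — both of its ends heading the same way, so $P$ returns to a compact region infinitely often — I can choose the window endpoints at two such nearby returns, making the cap short and obtaining the contradiction; this is the easy case.

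The main obstacle is the spatially \emph{transient} path, with one end at $+\infty$ and the other at $-\infty$: then every window-spanning cap is long and the inequality above is vacuous, so $M$ and $M'$ can never be pitted against one another on a common finite vertex set. Here I would instead use the non-crossing rigidity noted above: I expect a transient bi-infinite path whose arcs are non-crossing within each of $M$ and $M'$ to force an essentially consecutive, perfectly colour-alternating run of points along $P$, which has probability zero under the independent red--blue Poisson colouring; turning this expectation into a proof is the crux of part (ii). Finally, invariance makes the dichotomy rigorous and upgrades it to an almost-sure statement: since $M$, and $M'$ when $\gamma'<1$, are factors by \cref{2colclass}(iv), the configuration $(R,B,M,M')$ is invariant and ergodic, so ``$M\cup M'$ contains a bi-infinite path'' has probability $0$ or $1$ and is killed by the above; for $\gamma'=1-$ I would run the same argument against each matching in the explicit list of \cref{2colclass}(iii), namely the locally finite $M_k$ and the two locally infinite $M_{\pm\infty}$.
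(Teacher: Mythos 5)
Part (i) of your proposal is correct and is essentially the paper's own argument: both reduce $\gamma$-minimality, for every $\gamma\in\{1+\}\cup(1,\infty]$, to the same $\gamma$-free condition that the matching respect the order of the points (no straddling pair, and no entwined pair of opposite orientations; cf.\ \cref{ineq} and \cref{possibilities}), whence the set of minimal matchings is a.s.\ the common family $(M^k)_{k\in\Z}$ of \cref{2colclass}(i).

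Part (ii), however, has a genuine gap, which you yourself flag: the exclusion of bi-infinite alternating paths in $M\cup M'$ is never carried out, and the mechanism you propose for closing it cannot work. Your ``easy'' recurrent case is vacuous as stated --- a path with distinct vertices in the locally finite set $R\cup B$ meets every compact set only finitely often, so no bi-infinite path ``returns to a compact region infinitely often'' --- and even where a short cap is available, the two minimality inequalities at your disposal carry different exponents ($f_\gamma$ for $M$, $f_{\gamma'}$ for $M'$), so they cannot be added or played against each other to yield a contradiction; that exchange argument proves uniqueness when $\gamma=\gamma'$, but not a comparison across distinct exponents. More fundamentally, non-crossing structure does not force a probability-zero colour pattern. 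Both matchings match each point within its level (\cref{levels}); the points of a level alternate in colour perfectly, almost surely; and the union of the two alternating matchings $m_j^+$ and $m_j^-$ of a level is precisely a bi-infinite alternating non-crossing path. This genuinely occurs among minimal matchings: the Remark at the end of \cref{orient} notes that two distinct $(1-)$-minimal matchings $M_k$, $M_{k'}$ do \emph{not} have finite differences. So no ``essentially consecutive, colour-alternating run'' of probability zero is forced by the existence of a transient path, and your crux cannot be resolved along the lines you expect.

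What is actually needed is a quantitative use of minimality, and the paper supplies it in two different ways. For $\gamma,\gamma'<1$ it combines quasistability (\cref{quasi}) with the random-walk estimate of \cref{intervals}: around a typical point one finds a finite block $H$ (a level intersected with $(-Y,Y)$, where $W>0$ on $[-aY,-Y]\cup[Y,aY]$ and $a$ is chosen from $\kappa(\gamma)\vee\kappa(\gamma')$) which \emph{both} matchings must match to itself, so the component of that point in $M\cup M'$ is confined to $H$. For $\gamma'=1-$ it invokes \cref{nest} --- every level of the $\gamma$-minimal matching contains nested edges of both orientations crossing any given location, a result resting on ergodicity and \cref{locinf} --- which supplies a right-oriented $M$-edge $\e{r,b}$ around any point of the level; the set $\Lambda_j\cap[r,b]$ is then closed under both $M$ and $M'$, giving a finite component. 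Without an input of one of these kinds, your reduction to ``no bi-infinite paths,'' while correct, cannot be completed.
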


We next turn to quantitative questions: how efficient are the matchings, and how locally can they be determined?  The following standard concept allows us to consider typical points.  For an invariant perfect $2$-colour matching scheme $M$, let $(R^*,B^*,M^*)$ be the Palm process of $R$, with $(B,M)$ as background processes.  This can be interpreted as $(R,B,M)$ viewed from a typical red point, or conditioned to have a red point at $0$. (See \cref{defs} or \cite[Ch.~7]{kallenberg} or \cite{hpps} for details). We denote the associated Palm probability measure $\P^*$.  Analogous definitions apply in the $1$-colour case.  The \df{matching distance} $X$ of $M$ is the random distance from the origin to its partner under the Palm measure, i.e.\
$X:=|M^*(0)|$ where $M^*(0)$ is defined by $\e{0,M^*(0)}\in M^*$.

Suppose that $M$ is a factor matching scheme.  We say that it is \df{finitary} if, under the Palm measure, the partner $M^*(0)$ of the origin can be determined by examining the restriction of the point processes $(R^*,B^*)$ (or $R^*$ in the $1$-colour case) to a ball $\{x\in\R^d:|x|< L\}$, where the \df{coding radius} $L$ is an almost surely finite random variable (itself a function of $(R^*,B^*)$, or $R^*$ respectively).  When this holds it is immediate that $X\leq L$ almost surely.

\begin{thm}[$2$-colour tail bounds]\label{2coltail}
    Let $R$ and $B$ be independent Poisson processes of intensity~$1$ on $\R$, and consider $2$-colour matchings.  Let $\gamma\in [-\infty,1)$ and let $M$ be the unique $\gamma$-minimal matching, or let $\gamma=1-$ and let $M$ be one of the two $\gamma$-minimal factor matchings $M_\infty,M_{-\infty}$.
\begin{enumerate}[\rm (i)]
  \item The matching distance $X$ satisfies
  $$\E^* X^{1/2}=\infty \qquad\text{but}\qquad \P^*(X>x)<cx^{-1/2},\quad x>0,$$
  for some $c=c(\gamma)>0$.
  \item The matching is a finitary factor of $(R,B)$, with coding radius $L$ satisfying
  $$\P^*(L>\ell)<C\ell^{-\alpha},\qquad \ell>0,$$
  for some $\alpha=\alpha(\gamma)>0$ and $C=C(\gamma)>0$.
\end{enumerate}
\end{thm}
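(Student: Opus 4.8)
The plan is to reduce the geometry to excursions of the discrepancy walk of the two processes and then invoke classical first-passage asymptotics. Assign $+1$ to each red point and $-1$ to each blue point and let $D$ be the resulting two-sided partial-sum process; $D$ is a recurrent, mean-zero walk whose variance over an interval of length $t$ is of order $t$. A one-line computation with the cost $|\cdot|^\gamma$ (comparing the two re-pairings of any two red and two blue points) shows that for every $\gamma<1$, and for the tie-breaking rule $1-$, the non-crossing re-pairing is the selected one; hence in this whole regime every $\gamma$-minimal matching is a.s.\ non-crossing. For a non-crossing matching the partner of a point lies on the side where its enclosing excursion of $D$ first closes, so the two locally infinite factor matchings $M_{\pm\infty}$ of \cref{2colclass} are exactly the rightward and leftward ``arch'' matchings, in which each point is joined to the closure of its enclosing excursion, i.e.\ to a first passage of $D$ to the appropriate level; the stable case $\gamma=-\infty$ and the unique subcritical matching admit the same description up to finite differences. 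By \cref{2colcomp}(ii) all the matchings in the statement differ from a single representative only in finite components, so it suffices to prove the bounds for one arch matching and separately to bound the diameter of the difference component containing the origin's edge; the latter is itself an excursion length, with a tail of the same order or smaller, so the exponent is preserved.

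\emph{Tails of $X$.} For the upper bound, $\{X>x\}$ forces the excursion of $D$ enclosing the origin's edge to have width exceeding $x$, which requires $D$ to stay one-signed across an interval carrying of order $x$ points on one side of the origin. The probability that a recurrent finite-variance walk stays nonnegative for $n$ steps is of order $n^{-1/2}$, and a law-of-large-numbers estimate turns ``length $x$'' into ``$\asymp x$ steps'' (the number of points in an interval of length $x$ concentrates around its mean with fluctuations $O(\sqrt x)$); combining these, and adding the same-order contribution of the difference component, gives $\P^*(X>x)<c\,x^{-1/2}$. For the matching lower bound, under the Palm measure the origin is a red point, i.e.\ an up-step of $D$; the event that this up-step opens an excursion of width greater than $x$ has probability of order $x^{-1/2}$, and on it the origin is joined to the closure of that excursion, so $X>x$. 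Hence $\P^*(X>x)\asymp x^{-1/2}$, and since $\E^* X^{1/2}=\int_0^\infty \P^*(X>y^2)\,dy\ge \int_1^\infty c' y^{-1}\,dy=\infty$, the half-moment diverges.

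\emph{Coding radius.} To certify the partner of the origin it is enough to observe $(R,B)$ until the excursion of $D$ enclosing the origin's edge has closed on both sides, that is, until one reaches on each side a point at which the two-sided walk has sealed off the block containing the origin. This coding radius $L$ is an a.s.\ finite two-sided first-passage functional of $D$, and the same survival estimate (now applied on both sides, together with the difference-component diameter) yields $\P^*(L>\ell)<C\ell^{-\alpha}$ for some $\alpha>0$; since only positivity of $\alpha$ is claimed, the crude first-passage bound suffices. Finiteness of $L$ a.s.\ also shows that the matching is a finitary factor.

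\emph{Main obstacle.} The probabilistic inputs are classical, so the real work is structural: identifying the stated matchings with the arch/first-passage matchings uniformly over $[-\infty,1)\cup\{1-\}$ --- in particular justifying that the global minimality condition (not merely optimality of pairwise swaps) selects these matchings, correctly treating the two locally infinite matchings $M_{\pm\infty}$ and the stable endpoint, and transferring the tails across finite differences with the exponent intact --- together with the size-biased excursion bookkeeping under the Palm measure and the count-to-length conversion. Pinning down on which side, and how far, the origin is matched in the two-sided recurrent walk is the delicate point; once $X$ and $L$ are expressed as genuine first-passage functionals of $D$, the estimates follow from the $n^{-1/2}$ survival asymptotics.
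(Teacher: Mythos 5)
Your handling of the case $\gamma=1-$ is essentially the paper's own argument: the arch/first-return representation \eqref{meshalkin} of $M_{\pm\infty}$ together with the $t^{-1/2}$ first-passage asymptotic (\cref{hit}) gives both the tail of $X$ and the coding radius, with exponent $1/2$. The genuine gap is your reduction of the subcritical case $\gamma\in[-\infty,1)$ to the arch matching via finite differences. \cref{2colcomp}(ii) is purely qualitative: it says the components of $M\cup M_{-\infty}$ are a.s.\ finite, but provides no tail bound for the diameter of the component containing the origin's edge. Your assertion that this diameter ``is itself an excursion length, with a tail of the same order or smaller'' is unjustified and, worse, circular: in the paper's proof of \cref{2colcomp}(ii) the difference component at a point $x$ is confined to an interval $[r,b]$ spanned by an edge $\e{r,b}$ of the $\gamma$-minimal matching $M$ \emph{itself} (supplied by \cref{nest}), so bounding the component diameter requires bounding the edge lengths of $M$ --- exactly the quantity you are trying to estimate. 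The only quantitative control the paper's machinery gives on such localization scales is \cref{intervals}, whose exponent $\alpha(a)>0$ comes from a geometric-scales argument and is in general far smaller than $1/2$. Consequently your transfer cannot deliver the stated bound $\P^*(X>x)<cx^{-1/2}$, nor (in the reverse direction) the divergence $\E^*X^{1/2}=\infty$ for the subcritical matching, since your excursion lower bound is proved only for the arch matching and a difference component of unknown tail sits between the two.

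The paper avoids the transfer entirely, and this is the missing idea. For the upper bound with $\gamma<1$ it counts ``bad'' points directly: calling $x$ bad if $|x-M(x)|>\kappa t$, quasistability (\cref{quasi}) shows $[0,t]$ contains bad points of at most one colour, and the no-entwining property forces the bad points of that colour to be counted by an increment of the walk $W$; hence
$t\,\P^*(X>\kappa t)\le 2\,\E\sup_{x\in[0,t]}|W(x)|\le Ct^{1/2}$
by Doob's inequality, giving the exponent $1/2$ with no reference to $M_{-\infty}$. The divergence $\E^*X^{1/2}=\infty$ is quoted from \cite{hpps}, where it is proved for \emph{every} invariant $2$-colour matching scheme, again bypassing any comparison. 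For part (ii) with $\gamma<1$ your scheme is circular in a second way: to ``observe until the difference component is sealed off'' one must first identify that component, which presupposes knowing $M$ locally --- the very thing a finitary factor must compute. The paper instead builds a genuine local certificate: with $a=2\kappa+1$, \cref{intervals} supplies $Y$ (with tail $y^{-\alpha}$, $\alpha>0$ unspecified) such that $W>0$ on $[-aY,-Y]\cup[Y,aY]$; quasistability then forces the finite set $H=\Lambda\cap(-Y,Y)$ to be matched to itself in \emph{every} $\gamma$-minimal matching, and local uniqueness (\cref{coincidence}) determines the partner of the origin from $(R^*,B^*)$ restricted to $[-aY^*,aY^*]$. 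This is why the theorem claims exponent $1/2$ in part (i) but only some $\alpha>0$ in part (ii); your proposal, as written, cannot obtain either statement in the subcritical regime.
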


Turning to the $1$-colour case, there is again a sharp change in behavior at $\gamma=1$, but some details are different.  The $\gamma>1$ regime features a matching scheme that is invariant but not a factor.  In fact we can make a slightly stronger statement:  assign i.i.d.\ labels, each uniform on $[0,1]$, to the points of $R$ (conditional on $R$).  We say that $M$ is a factor \df{of i.i.d.\ labels} if $M$ can be expressed as a deterministic function of $R$ and the labels, where the function commutes with translations.

\begin{thm}[$1$-colour classification]\label{1colclass}
  Let $R$ be a Poisson process of intensity $1$ on~$\R$.
  \begin{enumerate}[\rm (i)]
    \item Let $\gamma\in [1,\infty]\cup\{1-,1+\}$.  Almost surely there
exist exactly two $\gamma$-minimal matchings, which we denote
$M_{+},M_{-}$.  Both are locally finite.  The set of matchings is
identical for all such $\gamma$.  The only invariant
$\gamma$-minimal matching
scheme is an equal mixture of $M_{+},M_{-}$.  It is not a factor, nor a factor of i.i.d.\ labels.
    \item
Let $\gamma\in [-\infty,1)$.
There exists an invariant $\gamma$-minimal $1$-colour matching scheme.
  \end{enumerate}
\end{thm}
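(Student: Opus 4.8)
The plan is to show that for $\gamma\in[1,\infty]\cup\{1-,1+\}$ every $\gamma$-minimal matching pairs consecutive points. By \cref{perfect} such a matching is a.s.\ perfect, so I enumerate the points as $\cdots<r_{-1}<r_0<r_1<\cdots$. The engine is a four-point swap: for $p_1<p_2<p_3<p_4$ I compare the three pairings and check that for $\gamma\ge 1$ the disjoint pairing $\{p_1,p_2\},\{p_3,p_4\}$ is the \emph{strict} minimizer, both the crossing pairing $\{p_1,p_3\},\{p_2,p_4\}$ and the nested pairing $\{p_1,p_4\},\{p_2,p_3\}$ costing strictly more. At $\gamma=1$ the crossing and nested costs coincide but the disjoint one is still strictly cheapest --- this is precisely why $1$, $1-$, $1+$ behave identically --- and $\gamma=\infty$ is handled by the lexicographic min--max refinement. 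Applying this to any two edges forbids crossings and nestings, and a perfect, non-crossing, non-nested matching of a bi-infinite point set has every edge joining consecutive points. Exactly two such matchings exist, namely the two perfect matchings $M_+,M_-$ of the nearest-neighbour path on $R$; each is locally finite and, conversely, $\gamma$-minimal for every $\gamma$ in the range, so the family is common to all these $\gamma$.

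\textbf{Part (i): invariance and non-factor statements.} Selecting $M_+$ or $M_-$ is the same as selecting one of the two alternating colourings of $R$ (colour each point by whether it is the left or right endpoint of its edge); let $C\in\{0,1\}$ be the colour of the root under the Palm measure $\P^*$. Real translations preserve this colouring, so for any invariant scheme $\P(C=0\mid R)$ is a translation-invariant function of $R$, hence a.s.\ constant $p$ by ergodicity; the selection is thus an independent coin. The bilateral point-shift $\theta$ (re-root at the next point) preserves $\P^*$ and flips the colour of the root, so $C\circ\theta=1-C$; measure-preservation forces $p=\tfrac12$, identifying the unique invariant scheme as the equal mixture. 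The same identity gives $\theta^2$-invariance of $C$, and since $\theta^2$ is ergodic --- for $R$ alone, and also after adjoining i.i.d.\ labels, as the labelled point-shift is still Bernoulli --- a deterministic $C=C(R)$ or $C=C(R,\mathrm{labels})$ would be a.s.\ constant, contradicting $p=\tfrac12$. Hence the mixture is neither a factor nor a factor of i.i.d.\ labels.

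\textbf{Part (ii): construction by a torus limit.} For $\gamma\in[-\infty,1)$ I would produce an invariant scheme as a weak limit of finite-volume minimizers. On the torus $\mathbb T_n=\R/n\Z$ let $R^{(n)}$ be a unit-intensity Poisson process and $M^{(n)}$ the a.s.\ unique cost-minimizing perfect matching for the functional attached to $\gamma$ (namely $\sum|e|^\gamma$, its negative, $\sum\log|e|$, or the lexicographic min--max for $\gamma=\pm\infty$). A global minimizer is automatically $\gamma$-minimal, since improving any finite edge set would lower the total. Encode $M^{(n)}$ as the symmetric point process $\Xi^{(n)}$ on $\R^2$ of ordered pairs (point, partner). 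The number of points of $\Xi^{(n)}$ in a compact set is dominated by the number of $R^{(n)}$-points in its projection, so $(\Xi^{(n)})$ is vaguely tight and has a subsequential limit $\Xi$ encoding an a priori partial matching $M$ of a Poisson process on $\R$; torus homogeneity makes this limit an invariant scheme. As $x\mapsto|x|^\gamma$ (resp.\ $\log|x|$, resp.\ the min--max order) is continuous and finite-set optimality is a closed condition, the optimality inequalities pass to the limit, so $M$ is $\gamma$-minimal.

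\textbf{The main obstacle: no escape of mass.} What the soft argument does not give is that $M$ is \emph{perfect}: in a vague limit a partner can run off to infinity, leaving a point unmatched, so the crux is a tightness (no-escape) estimate --- uniformly in $n$, the Palm probability that a typical point's partner lies at distance $>t$ should tend to $0$ as $t\to\infty$. For $\gamma\in[0,1)$ this comes from a cost comparison: the consecutive matching has expected cost $O(n)$ (its edges are Poisson gaps and $\E[\text{gap}^\gamma]<\infty$), whence $\E\,\mathrm{cost}(M^{(n)})=O(n)$; since an edge of length $>t$ costs at least $t^\gamma$, the density of long edges, and hence the matching-distance tail, is $O(t^{-\gamma})\to 0$ uniformly in $n$. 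This is where the real work sits, and the regime $\gamma\in[-\infty,0)$ is genuinely harder: there long edges are cost-cheap (indeed $\gamma$-minimal matchings may even contain crossings), the cost bound is useless, and one must instead control long edges structurally, comparing with the stable matching and its heavy-tailed but summable edge statistics in the spirit of \cite{hpps}. Once no-escape is in hand $M$ is a perfect, invariant, $\gamma$-minimal scheme; alternatively, having established $\gamma$-minimality of the (possibly partial) limit in the full sense that also forbids locally reducing the number of unmatched points, one may invoke \cref{perfect} to obtain perfectness directly.
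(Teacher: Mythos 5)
Your part (i) reduction to the two alternating matchings $M_\pm$ (four-point swap inequalities, perfectness from \cref{perfect}, non-crossing and non-nesting forcing consecutive pairing) is essentially the paper's argument. Your route to the scheme classification, however, is genuinely different from the paper's (which quotes the mixing/local-approximation argument of \cite{hpps}), and one step of it is wrong as written: $\P(C=0\mid R)$ is \emph{not} a translation-invariant function of the configuration, because the identification of which alternating matching is which is root-dependent --- translating across a single point of $R$ exchanges the roles of $M_+$ and $M_-$ (equivalently, flips the colour of the first point to the right of the origin). This parity flip is precisely the delicacy the paper flags, so translation-ergodicity cannot be invoked to make the conditional probability constant. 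The gap is repairable with the tool you already deploy for the factor statement: writing $\psi(R^*):=\P^*(C=0\mid R^*)$ under the Palm measure, invariance of the scheme together with the fact that the point-shift $\theta$ is a bimeasurable bijection on rooted configurations (so conditioning on $R^*$ and on $\theta R^*$ is the same) yields $\psi\circ\theta=1-\psi$; hence $\psi$ is $\theta^2$-invariant, and since under the Palm measure the gap sequence is i.i.d.\ (so $\theta$ is a Bernoulli shift and $\theta^2$ is ergodic), $\psi$ is a.s.\ constant and therefore $\equiv\tfrac12$. With that correction, your $\theta^2$-ergodicity argument for ``not a factor, nor a factor of i.i.d.\ labels'' does go through and is arguably more self-contained than the citation the paper uses.

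Part (ii) contains the more serious gap. Your torus limit parallels the paper's construction (\cref{quasilimit} uses randomly shifted tiles rather than tori), and you correctly isolate the crux as a no-escape estimate, but you only supply one for $\gamma\in(0,1)$; for $\gamma=0$ the cost $\log|e|$ is unbounded below, so ``a long edge costs at least $t^\gamma$'' needs repair (patchable by controlling the negative part via nearest-neighbour distances), and for $\gamma\in[-\infty,0)$ you offer only the suggestion to compare with stable matching ``in the spirit of \cite{hpps}'', which is not an argument: for negative $\gamma$ long edges are essentially free, and no such comparison is carried out. The missing idea is the paper's quasistability inequality (\cref{quasi}): for every $\gamma\in[-\infty,1)$ there is $\kappa=\kappa(\gamma)<\infty$ such that in \emph{any} $\gamma$-minimal matching, any two points satisfy $|x-M(x)|\wedge|y-M(y)|\le\kappa\,|x-y|$. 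This is an elementary two-edge swap estimate, it holds uniformly for the finite-volume minimizers, and it implies that a ball $S_t$ contains at most one point without a partner in $S_{(2\kappa+1)t}$, uniformly in the approximation; that bound survives the vague limit, and combined with \cref{unm} gives perfectness simultaneously for all $\gamma\in(-\infty,1)$ (with $\gamma=-\infty$ handled separately via stable matching). Without this, or an equivalent uniform estimate, your construction leaves the entire regime $\gamma\in[-\infty,0)$ unproven; your alternative suggestion of invoking \cref{perfect} does not circumvent this, since verifying that the limiting partial matching is $\gamma$-minimal in the sense that also minimizes unmatched points requires essentially the same swap estimate.
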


For $1$-colour matchings in the $\gamma<1$ regime we lack proofs of uniqueness and finite differences (although we expect that they hold), and we do not know whether there exist factor matching schemes.  An exception is $\gamma=-\infty$: stable matchings are almost surely unique and perfect (for all $d$ and for $1$ and $2$ colours), and much more is known about them -- see \cite{hpps}.  Turning to matching distance, we establish the following bounds.

\begin{samepage}
\begin{thm}[$1$-colour tail bounds]\label{1coltail}
    Let $R$ be a Poisson processes of intensity~$1$ on~$\R$.
\begin{enumerate}[\rm (i)]
    \item Let $\gamma\in [1,\infty]\cup\{1-,1+\}$ and let $M$ be the invariant $\gamma$-minimal matching scheme.  The matching distance $X$ satisfies
        $$\P^*(X>x)=e^{-x}, \qquad x>0.$$
    \item Let $\gamma\in [-\infty,1)$ and let $M$ be any invariant $\gamma$-minimal matching scheme.  We have
        $$\P^*(X>x)<c x^{-1}, \quad x>0,$$
        for some $c=c(\gamma)>0$.
\end{enumerate}
\end{thm}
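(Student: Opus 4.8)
For part~(i), the plan is to combine the structural input of \cref{1colclass}(i)---that there are exactly two $\gamma$-minimal matchings $M_+,M_-$, whose equal mixture is the only invariant scheme---with an explicit identification of $M_\pm$. The first step is to show that for $\gamma\ge1$ every $\gamma$-minimal matching pairs \emph{consecutive} points. Indeed, for three consecutive gaps $a,b,c$, both the crossing cost $(a+b)^\gamma+(b+c)^\gamma$ and the nesting cost $(a+b+c)^\gamma+b^\gamma$ strictly exceed the consecutive cost $a^\gamma+c^\gamma$ whenever $b>0$, so \eqref{mm} forces the non-crossing, non-nesting configuration; the boundary values $1\pm$ and $\infty$ follow by taking limits. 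There are exactly two consecutive pairings, given by the two parities, so these are $M_+$ and $M_-$, and each point is matched to one of its two nearest neighbours, oppositely under the two matchings. Writing the points in increasing order as $(r_i)_{i\in\Z}$, the matching distance of $r_i$ is the gap $r_{i+1}-r_i$ under one matching and $r_i-r_{i-1}$ under the other. Under $\P^*$ I place a point at the origin; by Slivnyak's theorem the rest is again Poisson of intensity~$1$, so the distances to the nearest point on each side are independent and each $\mathrm{Exp}(1)$. Under the equal mixture the matching distance $X$ is the right gap or the left gap with probability $\tfrac12$ each, and as both are $\mathrm{Exp}(1)$ we get $X\sim\mathrm{Exp}(1)$, i.e.\ $\P^*(X>x)=e^{-x}$.

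For part~(ii) I would first reduce the tail bound to an edge-intensity estimate. Exactly half the points are left endpoints, so by Palm calculus $\lambda(x)$, the intensity of edges of length exceeding $x$ counted by left endpoint, equals $\tfrac12\,\P^*(X>x\mid \text{origin is a left endpoint})$; by the left/right symmetry of the matching this gives $\P^*(X>x)=2\lambda(x)$, and it suffices to prove $\lambda(x)<c/x$. The key structural input is that for $\gamma<1$ the matching is \emph{non-crossing}: the crossing cost $(a+b)^\gamma+(b+c)^\gamma$ is never optimal in \eqref{mm}, since for $\gamma\in(0,1)$ it strictly exceeds the consecutive cost $a^\gamma+c^\gamma$ by monotonicity, and the cases $\gamma\le0$ (including $\log$) are analogous after the sign change. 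Hence the edges form a laminar family: any edge $[r,r']$ encloses an even number of points, all matched among themselves, and the edges crossing a fixed location are nested. Recording at each point whether it opens or closes an edge yields a height process $H(t)=\#\{\text{edges crossing }t\}\ge0$, and the origin's edge has length $>x$ precisely when the excursion of $H$ begun at the origin survives a spatial distance $x$.

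The core of the argument, and the step I expect to be the main obstacle, is to show that minimality forces this excursion to be short enough that $\lambda(x)<c/x$. I would exploit \eqref{mm} through a local surgery: if the origin is matched far to $v>x$, re-pairing it with a nearby enclosed point and splicing the freed partner into the long edge yields an inequality constraining the enclosed gaps, and iterating these constraints along the excursion shows that a long edge demands a whole cascade of atypical gaps. The delicacy is that the nesting decisions come from a \emph{global} minimization, so the increments of $H$ are strongly dependent; the naive heuristic of independent $\pm1$ increments gives a first-passage tail of only order $x^{-1/2}$, which is too weak. The improvement to $x^{-1}$ must come from the fact that long edges are costly and are therefore systematically avoided by the minimizer, effectively biasing $H$ to return quickly. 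I expect the cleanest route is to encode the excursion length as a first-passage time of an auxiliary random walk built from the gap sequence, and to show that a long edge requires the simultaneous failure of a short match on \emph{both} sides of a growing interval---each of probability of order $x^{-1/2}$ and asymptotically decoupled---so that their product yields the desired $O(x^{-1})$ bound.
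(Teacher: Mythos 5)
Part (i) of your proposal is correct and is essentially the paper's argument: by \cref{1colclass}(i) the only invariant scheme is the equal mixture of the two alternating matchings, and the Palm computation (Slivnyak plus the independent fair coin) gives $\P^*(X>x)=e^{-x}$ exactly. (One small imprecision: in the infinite setting the cases $\gamma=1\pm,\infty$ are not defined as limits, but the same cost comparisons handle them directly, as in \cref{possibilities}.)

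Part (ii) has a genuine gap, and you have located it yourself: the ``core of the argument'' that you ``expect to be the main obstacle'' is exactly the step you never supply. Your plan --- laminar structure, height process $H$, excursion-length analysis, and a hoped-for decoupling of ``failures of short matches on both sides,'' each allegedly of order $x^{-1/2}$ --- is a heuristic with no proof that those events have those probabilities, that they decouple, or even that minimality forces them; as a route to the theorem it is not an argument but a conjecture. The paper's resolution is entirely different and far more elementary, and uses none of the structure you set up (no laminarity, no height process, no random-walk or excursion analysis). The key is quasistability (\cref{quasi}): for $\gamma<1$ there is a constant $\kappa=\kappa(\gamma)$ such that in any $\gamma$-minimal matching, any two points $x,y$ satisfy $|x-M(x)|\wedge|y-M(y)|\leq\kappa|x-y|$. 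This is proved by a single two-point surgery (re-pair $x$ with $y$, and $M(x)$ with $M(y)$), the point being that for $\gamma<1$ the saving from shortening two very long edges beats the cost of the new edge $\e{M(x),M(y)}$; this is the computation with $g(u,v)=u^\gamma+v^\gamma-1-(1+u+v)^\gamma\to\infty$. Quasistability immediately implies that an interval of length $t$ contains \emph{at most one} point whose matching distance exceeds $\kappa t$, since two such points would violate the inequality. Palm calculus then finishes in one line: $t\,\P^*(X>\kappa t)=\E\#\{x\in R\cap[0,t]:|x-M(x)|>\kappa t\}\leq 1$, giving $\P^*(X>x)<c\,x^{-1}$ with $c$ depending only on $\kappa$ (this is \cref{dmom} with $d=1$). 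Note that this argument is immune to the global-versus-local worry that derailed your plan: it applies to \emph{any} invariant $\gamma$-minimal scheme, requires no uniqueness or structural classification of the matching, and works verbatim in every dimension, where it yields the stronger bound $\P^*(X>x)<c\,x^{-d}$.
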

\end{samepage}

If it could be established that there were a unique $\gamma$-minimal $1$-colour matching for $\gamma\in(-\infty,1)$ (as in the $2$-colour case, and the case $\gamma=-\infty$) then it would automatically be a factor.  By \cite[Theorem~3(i)]{hpps} it would follow that $\E^* X=\infty$, complementing the bound in \cref{1coltail}(ii) and establishing a sharp change in tail behavior across $\gamma=1$.

In higher dimensions the picture is much less complete.  We establish existence of $\gamma$-minimal matchings, and even of invariant schemes, in many but not all cases.

\begin{thm}[Higher dimensions]\label{higher}
Let $R$ (and $B$) be (independent) Poisson process(es) of intensity~$1$ on $\R^d$ and let $\gamma\in [-\infty,\infty]$.  There exists an invariant perfect $\gamma$-minimal matching scheme in each of the following three cases:
 \begin{enumerate}[\rm (i)]
  \item $1$-colour, $d\geq 2$, $\gamma <\infty$;
  \item $2$-colour, $d=2$, $\gamma<1$;
  \item $2$-colour, $d\geq 3$, $\gamma<\infty$.
 \end{enumerate}
\end{thm}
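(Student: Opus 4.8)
The plan is to construct, in each of the three cases, an \emph{invariant} $\gamma$-minimal matching scheme; perfectness will then be automatic. Indeed, $\gamma$-minimality alone forbids the coexistence of an unmatched red and an unmatched blue point (respectively, of two unmatched points in the $1$-colour case), since matching such a pair strictly reduces the number of unmatched points, which by definition takes precedence over cost. For an invariant $2$-colour scheme the intensities of unmatched red and unmatched blue points coincide (edges pair the two colours, and the two colours have equal intensity); were this common intensity positive, both processes would be almost surely nonempty, contradicting the previous sentence, so the intensity is $0$ and the scheme is perfect. The $1$-colour case is similar: an invariant scheme with at most one unmatched point has unmatched intensity $0$, hence none. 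It therefore suffices to produce an invariant $\gamma$-minimal scheme.

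I would build one by a finite-volume approximation. On the torus $\mathbb{T}_L=(\R/L\Z)^d$, place the Poisson process(es), and let $M_L$ be a $\gamma$-minimal \emph{partial} matching there: it minimizes the number of unmatched points and then, subject to that, the total $\gamma$-cost. Such a minimiser exists because there are finitely many points, and by the exact translation symmetry of the torus the law of $(R_L,B_L,M_L)$ is invariant under all torus translations. The number of unmatched points is $|N_R-N_B|=O(L^{d/2})$ in the $2$-colour case (and at most $1$ in the $1$-colour case), so its density tends to $0$. I would then extract a weak limit of $(R_L,B_L,M_L)$, as seen in a fixed window, along a subsequence $L\to\infty$.

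The crux is to show that this limit is a genuine \emph{perfect} matching rather than a configuration in which edges have escaped to infinity, and for this I need a tail bound on edge lengths that is uniform in $L$. Since $M_L$ is cost-minimal, its mean $\gamma$-cost per point is at most that of any reference matching of the same points; so it is enough to exhibit an invariant (or torus) matching whose mean $\gamma$-cost is bounded uniformly in $L$. This is exactly where the three cases and their thresholds enter, via optimal-matching and transportation estimates. In the $1$-colour case with $d\ge 2$, and in the $2$-colour case with $d\ge 3$, one can match each point to a nearby partner using a hierarchical (dyadic) scheme whose edge length has light tails, giving finite mean $\gamma$-cost for every $\gamma<\infty$. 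In the $2$-colour case with $d=2$ the Ajtai--Koml\'os--Tusn\'ady bound forces the mean $\gamma$-cost per point to diverge logarithmically at $\gamma=1$, but for $\gamma<1$ a reference matching of finite mean $\gamma$-cost still exists. A uniform bound on the mean $\gamma$-cost yields, via Markov's inequality and the Palm calculus, a uniform bound $\P^*(X_L>x)\le Cx^{-\gamma}$ on the edge $X_L$ at a typical point (for $\gamma>0$; for $\gamma\le 0$ the favouring of short edges must instead be exploited directly). This gives tightness, so no edge escapes and the limit is perfect.

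It remains to check that the limit is invariant and $\gamma$-minimal. Invariance under every translation of $\R^d$ follows from the torus symmetry: any fixed translation eventually fits inside $\mathbb{T}_L$, so it preserves the window law in the limit (alternatively one averages over a uniform random shift of the torus before passing to the limit). For $\gamma$-minimality, suppose the limit admitted a finite improving rearrangement of its edges; by the local weak convergence and the uniform boundedness of edge lengths this same rearrangement would be available in $M_L$ for all large $L$, and since Poisson points are almost surely in general position the strict cost improvement would persist, contradicting the minimality of $M_L$. I expect the main obstacle to be the uniform cost bound of the previous paragraph: constructing reference matchings with the required finite mean $\gamma$-cost, in particular in the borderline $2$-colour regime $d=2$, $\gamma<1$ just below the Ajtai--Koml\'os--Tusn\'ady threshold, and adapting the tightness argument to the cases $\gamma\le 0$, where long edges carry no cost penalty and so must be controlled through the structure forced by the discrepancy rather than by a direct moment bound.
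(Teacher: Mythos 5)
Your overall skeleton (finite-volume minimizers, weak limit along a subsequence, invariance from the torus/random-shift symmetry, and minimality of the limit via a local-rearrangement contradiction) is sound, and it is essentially the strategy the paper uses in \cref{quasilimit}, with randomly shifted tiles of $\R^d$ in place of tori. Two smaller remarks: your reduction of perfectness to invariance is the paper's \cref{inv-perfect}, but your step ``equal positive intensities $\Rightarrow$ both unmatched processes are a.s.\ nonempty'' is false for non-ergodic schemes (a mixture of a ``reds unmatched'' scheme and a ``blues unmatched'' scheme is a counterexample); one needs ergodic decomposition as in \cref{unm}(ii). Also, for $\gamma\in[1,\infty)$ in cases (i) and (iii), your route requires proving uniform torus estimates (hierarchical/AKT-type reference matchings), whereas the paper sidesteps this by quoting invariant schemes on $\R^d$ with tails $e^{-ct^d}$ from \cite{hpps} and running a minimizing-sequence argument over invariant schemes (\cref{minlimit}); your version is plausible but is genuinely extra work.

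The real gap is the one you flag and do not close: every one of the three cases includes all of $\gamma\in[-\infty,0]$, and there your tightness mechanism fails, not for technical reasons but structurally. For $\gamma<0$ the cost $f_\gamma(x)=-x^\gamma$ is bounded above by $0$, so a uniform bound on mean cost per point says nothing whatsoever about long edges; for $\gamma=0$ the cost $\log x$ is unbounded below, so short edges can offset arbitrarily long ones and Markov's inequality again gives nothing; for $\gamma=-\infty$ there is no additive cost to bound at all. The idea that fills this hole in the paper is \emph{quasistability} (\cref{quasi}): for every $\gamma\in[-\infty,1)$ there is a constant $\kappa(\gamma)$ such that in \emph{any} $\gamma$-minimal matching, any two points $x,y$ (of opposite colours in the $2$-colour case) satisfy $|x-M(x)|\wedge|y-M(y)|\le\kappa\,|x-y|$; this is proved by the deterministic two-edge swap $\{x,M(x)\},\{y,M(y)\}\to\{x,y\},\{M(x),M(y)\}$, whose cost strictly decreases when both partners are far, precisely because $\gamma<1$. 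This property needs no cost estimates, no AKT bounds, and no reference matchings: it directly shows that in each finite-volume minimal matching a ball $S_t$ contains points of at most one colour whose partners leave $S_{(2\kappa+1)t}$, hence in the weak limit unmatched points have at most one colour, and the equal-intensity argument then forces perfectness. That single lemma covers all of $(-\infty,1)$ at once — in particular the whole range your moment argument misses — while $\gamma=-\infty$ is classical stable matching \cite{hpps}. Without quasistability (or a substitute controlling long edges ``through the discrepancy,'' which you mention but do not supply), your proof establishes the theorem only for $\gamma\in(0,\infty)$ in cases (i) and (iii) and $\gamma\in(0,1)$ in case (ii).
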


It is far from clear whether any $\gamma$-minimal matching exists in the remaining cases.  The cases $\gamma=1$ and $\gamma=\infty$ of $2$-colour matching in $\R^2$ seem particularly interesting.  The former case was discussed in \cite{h}, where in particular it was shown that any such matching must be locally finite.

\subsection*{Related work}

Minimal matchings of finitely many random points on bounded regions have been studied extensively.  Much of the focus has been on asymptotic behaviour of the total cost, which of course can be bounded without considering the structure of the minimal matching itself.  See for example \cite{bobkov-ledoux,talagrand,akt,fmr,holden-peres-zhai}.  We mention in particular the recent remarkably precise analysis of $2$-minimal matchings in dimension $2$ in \cite{ambrosio-glaudo-trevisan,ambrosio-glaudo}.

Our focus on infinite point sets and the structure of the set of minimal matchings is, for the most part, new.  An exception is stable matchings (or $(-\infty)$-minimal matchings in our terminology).  Gale and Shapley \cite{gs} introduced stable matching of finite sets under general preference orders. Stable matching of infinite point processes was first considered in \cite{hp} and investigated further in \cite{hpps}.  Various extensions and applications appear in  \cite{deijfen-holroyd-martin,holroyd-martin-peres,deijfen-haggstrom-holroyd,
deijfen-holroyd-peres,deijfen-lopes,daley-last,hhp}.  Note that perfectness,
existence and uniqueness are straightforward to prove for the stable case
\cite{hpps}.  As we shall see, the subcritical regime $\gamma<1$ shares certain features with the stable case.  Some arguments from stable matching carry over relatively easily to the more general setting, but others apparently do not, providing a useful perspective on their limitations.

The closest approach to the questions considered here is the article \cite{h} by one of the current authors, where $1$-minimal matchings were considered in relation to the problem of non-crossing matchings in the plane.  A notion of minimality is also considered for the related problem of allocations between the Poisson process and Lebesgue measure in \cite{huesmann-sturm}.

\subsection*{Outline of the paper}

\cref{defs} contains detailed definitions and elementary results; particularly useful are uniqueness of minimal matchings of finite point sets (\cref{coincidence}) and classification of edge arrangements in dimension~$1$ (\cref{possibilities}).  In \cref{perf} we prove perfectness, \cref{perfect}, via a somewhat surprising alternating path argument.  \cref{super,lev} analyse supercritical and critical matchings in dimension $1$.  The arguments are combinatorial in nature and involve precise characterizations of the matchings.  A key tool is a random walk representation introduced in \cref{lev}.  In \cref{lim} we give two different limiting arguments for establishing existence of minimal matchings, proving \cref{higher} as well as parts of the earlier theorems.  Moreover this section introduces quasistability, a key property of the subcritical regime.  In \cref{uniq} we use quasistability and random walk properties to establish uniqueness (\cref{2colclass}(iv)) as well as finite differences (\cref{2colcomp}(ii)) and finitariness (\cref{2coltail}) in the $2$-colour subcritical regime.  \cref{orient} proves a further structural property of this regime, and \cref{tail} proves the remaining tail bounds of \cref{1coltail,2coltail}.

\section{Preliminaries}
\label{defs}

In this section we give full formal definitions and establish some basic facts, including the various side remarks made in the introduction.

\subsection{Matchings}
Let $R$ be a set, whose elements we call \df{points}.  A \df{$1$-colour matching} $M$ of $R$ is a set of unordered pairs of points, called \df{edges}, such that each point belongs to at most one edge.  Similarly, let $R$ and $B$ be two sets whose points we call \df{red} and \df{blue} respectively.  A \df{$2$-colour matching} of $R$ and $B$ is a set $M \subseteq R\times B$ of ordered pairs, called \df{edges}, such that each point belongs to at most one edge.  We denote an edge $e=\e{x,y}$, where $x,y$ are its endpoints.
If a point $x$ belongs to some edge of a matching $M$ then we call $x$ \df{matched} and write $M(x)$ for its \df{partner}, i.e.\ the unique other point belonging to the edge. Otherwise we write $M(x)=\infty$ and call $x$ \df{unmatched}.  The set of unmatched points is $M^{-1}(\infty)$.  A matching is \df{perfect} if it has no unmatched points.

\subsection{Minimal matchings of finite sets}

We are interested in matchings in $\R^d$ that minimize a cost function of the edge lengths; we start with finite point sets.
Let $f:(0,\infty)\to\R$ be a non-decreasing function, and let $|\cdot|$ denote the Euclidean norm. Let $R$ (\rs $R$ and $B$) be (\rs disjoint) finite subset(s) of $\R^d$, and let $M$ be a $1$-colour matching of $R$ (\rs a $2$-colour matching of $R$ and $B$).
We write
$$f[M]:=\sum_{\e{x,y}\in M} f\bigl(|x-y|\bigr).$$
We sometimes call $f$ a \df{cost function}, and $f[M]$ the \df{cost} of the matching.  

Let $\prec$ denote the lexicographic order on real sequences: $a_1\cdots a_k\prec b_1\cdots b_k$ if and only if $a_i<b_i$ where $i$ is the smallest index for which $a_i\neq b_i$, while sequences of unequal length are compared by padding the shorter one with $-\infty$ entries at the end.  We say that $M$ is $f(\cdot)$\df{-minimal} if for every $1$-colour (\rs $2$-colour) matching $m$ of the same set(s) we have
$$\bigl(\#M^{-1}(\infty),f[M]\bigr)\preceq \bigl(\#m^{-1}(\infty),f[m]\bigr).$$
(So we first minimize unmatched points, and then cost).  Note that for any $f$, an $f(\cdot)$-minimal $1$-colour matching has 0 or 1 unmatched points according to the parity of $\#R$, while in the $2$-colour case there are $|\#R-\#B|$ unmatched points, all of the more numerous colour.

We focus on power laws and logarithms.  For $\gamma\in \R$ define
\begin{equation}\label{f}
f(x)=f_\gamma(x)=\begin{cases}
  x^\gamma,&\gamma>0;\\
  \log x,&\gamma=0;\\
  -x^\gamma,&\gamma<0.
\end{cases}
\end{equation}
We abbreviate $f_\gamma(\cdot)$-minimal to \df{$\gamma$-minimal}.  When $\gamma$ is clear from context we sometimes simply say \df{minimal}.

We will show later in this section that the function $f=\log$ arises as the limit $\gamma\to 0$, justifying the notation $f_0$.  Similarly, the cases $\gamma=\pm\infty,1\pm$ defined next arise as the appropriate limits.

For a 1- or $2$-colour matching $M$ of finite set(s) we write $|M|_\uparrow$ and $|M|_\downarrow$ respectively for the increasing and decreasing orderings of the multiset of edge lengths $(|x-y|:\e{x,y}\in M)$.  We say that $M$ is \df{$(-\infty)$-minimal} if for any matching $m$ of the same set(s) we have
$$\bigl(\#M^{-1}(\infty),|M|_\uparrow\bigr)\preceq \bigl(\#m^{-1}(\infty),|m|_\uparrow\bigr),$$
(where  $(x,(y_1,y_2,\ldots))$ is interpreted as  $(x,y_1,y_2\ldots)$ for purposes of the order $\preceq$).
An \df{$\infty$-minimal} matching is defined in the same way but using the decreasing orderings $|\cdot|_\downarrow$.

Consider the special case of dimension $1$.  Let $e$ and $e'$ be two edges
of a matching in $\R$, with all four endpoints pairwise distinct.  We call
them \df{entwined} if exactly one endpoint of $e$ lies between the endpoints
of $e'$ (and hence vice versa).  We say that $e$ \df{straddles} $e'$ if both
endpoints of $e'$ lie between the endpoints of $e$.  If two edges neither
entwine nor straddle then we call them \df{separate}.
We call a matching $M$ of finite set(s) \df{$(1+)$-minimal} if it is
$1$-minimal and no two edges straddle, and \df{$(1-)$-minimal} if it is
$1$-minimal and no two edges are entwined.
Also,
in a $2$-colour matching, we say that an edge $\e{r,b}\in R\times B$ is
\text{oriented} \df{right} if $r<b$ and \df{left} if $b<r$.

\subsection{Infinite sets}

Now we extend the definitions to infinite sets.  Let $M$ be a $1$-colour (\rs
$2$-colour) matching of a countable set $R$ (\rs sets $R$ and $B$).  Call a
subset $R'\subseteq R$ (\rs a pair of
subsets $R'\subseteq R$ and $B'\subseteq B$) \df{compatible} with $M$ if all partners of points in $R'$ (\rs $R'\cup B'$) also belong to $R'$ (\rs $R'\cup B'$); in other words the subset(s) consist only of matched pairs and unmatched points.  In that case we write $M|_{R'}$ (\rs $M|_{R',B'}$) for the \df{restriction}: the set of edges whose endpoints belong to $R'$ (\rs $R'\cup B'$).

Here is our key definition.  For $\gamma\in[-\infty,\infty]\cup\{1-,1+\}$ we say that $M$ is \df{$\gamma$-minimal} if for any \emph{finite} compatible subset(s) $R'\subseteq R$ (and $B'\subseteq B$), the restriction $M|_{R'}$ (\rs $M|_{R',B'}$) is a $\gamma$-minimal matching of $R'$ (and $B'$).  Note that these definitions agree with the original ones when $R$ (and $B$) are finite.
Note moreover that the restriction of a $\gamma$-minimal matching to infinite compatible set(s) is $\gamma$-minimal.
We observe also that the definitions for $\gamma=1\pm$ extend in the expected way: a matching of infinite sets is $(1+)$-minimal if and only if it is $1$-minimal and no two edges straddle, and similarly for $1-$. We could extend the concept of $f(\cdot)$-minimality for a general function $f$ to infinite sets in the same way.

We next note an alternative characterization of $(-\infty)$-minimal matchings.  Recall that if $x$ is unmatched in $M$ then we write $M(x)=\infty$.  In that case we also write $|x-M(x)|:=\infty$.  We write $\vee$ and $\wedge$ for maximum and minimum respectively. We say that a 1- or $2$-colour matching $M$ is \df{stable} if for any two points $x,y$ (of opposite colours, in the $2$-colour case) we have
\begin{equation}|x-M(x)|\wedge|y-M(y)|\leq |x-y|.\label{stab}\end{equation}
The interpretation is that no two points would both prefer to be matched to each other over their current situations.  As remarked earlier, stable matchings have been studied extensively \cite{gs,hpps}.

\begin{samepage}
\begin{lemma}[Stability]\label{stable}
  Let $R$ (and $B$) be countable (disjoint) subset(s) of $\R^d$, and suppose that all distances between pairs of points (\rs of opposite colours) are distinct.  A $1$-colour (\rs $2$-colour) matching  is $(-\infty)$-minimal if and only if it is stable.
\end{lemma}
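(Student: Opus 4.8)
The plan is to prove the two implications separately, in each case reducing the statement for the (possibly infinite) sets to a purely finite one. The key enabling observation is that both notions are \emph{local}. By definition $M$ is $(-\infty)$-minimal exactly when every restriction $M|_{R'}$ (\rs $M|_{R',B'}$) to a finite compatible set is an $(-\infty)$-minimal matching of that finite set. Stability, being the condition \eqref{stab} quantified over pairs of points, is inherited by restrictions, and conversely holds for $M$ as soon as it holds for every finite restriction containing the relevant pair together with its partners. Hence it suffices to establish, for finite $R$ (and $B$) with distinct pairwise distances (of opposite colours), that a matching is $(-\infty)$-minimal if and only if it is stable; the infinite case then follows by transferring through restrictions.

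For the easy direction I would argue the contrapositive: a finite $M$ that is not stable is not $(-\infty)$-minimal. If $x,y$ form a blocking pair, so that $|x-M(x)|>|x-y|$ and $|y-M(y)|>|x-y|$ (reading $|x-M(x)|=\infty$ when $x$ is unmatched), I pass to the compatible set consisting of $x,y$ and whichever of $M(x),M(y)$ exist, and modify $M$ there by matching $x$ to $y$ and re-pairing (or, if forced, leaving unmatched) the displaced partners. When both $x,y$ are unmatched this strictly lowers the number of unmatched points; otherwise the counts agree and the new matching has smallest edge length $\le|x-y|$, strictly below $|x-M(x)|\wedge|y-M(y)|$, hence strictly smaller in the order $\preceq$ on increasing length sequences. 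Either way the modification beats $M$, contradicting $(-\infty)$-minimality.

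The substantive direction is that a finite stable matching is $(-\infty)$-minimal, and here I would use a greedy shortest-edge induction showing that \emph{both} properties force the same matching. Let $\e{a,b}$ be the globally shortest admissible edge. By distinctness $a$ and $b$ are each other's strictly nearest admissible point, so any stable matching must contain $\e{a,b}$, for otherwise $a,b$ block. Likewise any $(-\infty)$-minimal matching must contain $\e{a,b}$: committing to this edge never increases the forced number of unmatched points (which equals $|\#R-\#B|$, \rs the parity of $\#R$, regardless), so one can realise the smallest possible least edge length $|a-b|$, and distinctness makes $\e{a,b}$ the unique edge achieving it. Deleting $a,b$ preserves both stability and $(-\infty)$-minimality of the remaining matching, and the induction yields that the unique stable matching and the unique $(-\infty)$-minimal matching both coincide with the greedy one, proving their equivalence.

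The main obstacle I anticipate is precisely this finite equivalence, and within it the bookkeeping around unmatched points: one must verify that fixing the shortest edge does not degrade the first (unmatched-count) coordinate of $\preceq$, so that the length comparison governs optimality, and that the greedy matching is genuinely stable (any blocking pair would have been matched when its shorter length was reached). The distinctness hypothesis is used throughout to single out a unique shortest edge and to obtain the strict inequalities in the blocking-pair argument. Once the finite equivalence is in hand, the passage to infinite $R$ (and $B$) is immediate from the locality remarks of the first paragraph.
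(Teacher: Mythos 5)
Your proposal is correct, but the substantive half takes a genuinely different route from the paper's. The easy direction (not stable implies not minimal) is the same local-swap modification argument the paper uses. For the converse, the paper argues the contrapositive directly: if $M$ is not minimal, take a finite compatible set on which the restriction $m$ fails to be minimal, let $m'$ be a minimal matching of those same points, prune any points whose partner/unmatched status agrees in $m$ and $m'$, and observe that the shortest edge $\e{x,y}$ of $m'$ is (strictly, by distinct distances) shorter than every edge of $m$ --- so $x,y$ is a blocking pair for $M$. You instead establish the finite equivalence by a greedy induction: both stability and minimality force the unique shortest admissible edge $\e{a,b}$ into the matching, deleting $a,b$ preserves both properties, and so both classes collapse to the single greedy matching. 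Your bookkeeping concerns do resolve as you anticipate: removing a matched pair leaves the minimum achievable number of unmatched points unchanged ($\#R$ mod $2$, \rs $|\#R-\#B|$), so the lexicographic comparison is genuinely decided by the length sequences, and the padding convention in $\preceq$ causes no trouble since matchings of the same finite set with equal unmatched counts have equally many edges. What your approach buys is strictly more information: uniqueness of the finite stable/minimal matching and its identification with the iterated mutually-closest-pair (greedy) matching, paralleling the known description of stable matchings in \cite{hpps}. What the paper's approach buys is brevity: by comparing against an arbitrary minimal matching of a finite compatible set, it needs only the existence of such a matching, never uniqueness, an induction, or an explicit construction.
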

\end{samepage}

\begin{proof}
  Suppose that $M$ is not stable, so \eqref{stab} fails.  If $x$ and $y$ are both unmatched then we can match them to each other. If only $y$ (without loss of generality) is unmatched then we could match $x$ to $y$ instead of $M(x)$.  If both points are matched then we could match $x$ to $y$ and $M(x)$ to $M(y)$.  In each case, the modification shows that $M$ is not minimal.

  Suppose that $M$ is not minimal.  Consider a finite compatible set of points on which its restriction $m$ is not minimal, and let $m'$ be a minimal matching of the same points.  We can assume that every point either has different partners in $m$ and $m'$ or is unmatched in one but not the other (otherwise reduce the compatible set).  Minimality implies that the shortest edge $\e{x,y}$ of $m'$ is no longer than any edge of $m$, so by the distinct distances assumption it is strictly shorter than every edge of $m$.  But then $x,y$ violate \eqref{stab}, so $M$ is not stable.
\end{proof}

\subsection{Point processes}

We are interested in matching random sets.  Let $\red$ be a simple
point process on $\R^d$, which is formally a
locally finite random measure, where $\red(S)$
represents the number of points in $S\subseteq\R^d$.
We take $R$ to be its \df{support}, which is the random discrete
set of its points:
$$R=\supp\red:=\bigl\{x\in\R^d:\red(\{z\})=1\bigr\}.$$
Let $\blue$ be another simple point process, and let $B=\supp \blue$.  A $2$-colour \df{matching scheme} of $\red$ and $\blue$ is a simple point process $\mat$ on $(\R^d)^2$ such that a.s.\ $M:=\supp \mat$ is a matching of $R$ and $B$ (where $\red,\blue,\mat$ are assumed to be defined on some shared probability space).   Similarly, a $1$-colour \df{matching scheme} of $\red$ is a simple point process $\mat$ on the space of unordered pairs whose support is a $1$-colour matching of $R$ a.s.  Usually we suppress explicit mention of the random measures, and simply call $R$ and $B$ point processes, and $M$ a matching scheme.

A translation $\theta$ of $\R^d$ acts on point sets via $\theta R=\{\theta x:x\in R\}$, and on matchings via $\theta M=\{\e{\theta x,\theta y}: \e{x,y}\in M\}$.
A point process $R$ is \df{invariant} if $R$ and $\theta R$ are equal in law
for each translation $\theta$ of $\R^d$.  A matching scheme $M$ is
\df{invariant} if $(R,M)$ (\rs $(R,B,M)$) is invariant in law under the
diagonal action $\theta(R,B,M)=(\theta R,\theta B,\theta M)$ of each
translation. A matching scheme $M$ is a \df{factor} if a.s.\ $M=F(R)$ (\rs
$M=F(R,B)$) for some measurable function $F$ that commutes with each translation of $\R^d$.  If $R$ (\rs $(R,B)$) is invariant then a factor matching scheme is invariant. (In fact, given only that $M=F(R)$ or $M=F(R,B)$ a.s., one can show that $F$ can be chosen to commute with translations if and only if $M$ is invariant).  A matching scheme $M$ is \df{ergodic} if $(R,M)$ (\rs $(R,B,M)$) is ergodic under the full group of translations of $\R^d$.
One can similarly consider invariance under isometries or other transformations, but we focus on translations.

We call a matching scheme \df{$\gamma$-minimal} if the matching is a.s.\ $\gamma$-minimal, and similarly for other properties, such as perfectness.
We emphasize two distinct viewpoints.  We can consider the random set of all
possible minimal matchings, as a function of the random sets $R$ and $B$.
Or we can consider a minimal matching scheme, which means that for almost
every choice of $R$ and $B$ we pick a minimal matching from that set
(perhaps using additional randomness, if the scheme is not a factor).  Matching schemes are mainly of interest when they are invariant.  Note that if there is a.s.\ a unique $\gamma$-minimal matching then automatically
there is an a.s.\ unique $\gamma$-minimal matching scheme (obtained by simply choosing the minimal matching as a function of the points).
Moreover if $R$ is invariant (\rs $(R,B)$ is jointly invariant) then this scheme is invariant and a factor.  (One can check that a matching scheme obtained in this way is indeed measurable in the appropriate sense by using \cite[Lemma~1.6]{kallenberg2} to re-express point processes as sums of point measures at random locations, together with the `selection theorem' \cite[Theorem~A.1.4]{kallenberg}.)

We focus on $1$-colour matchings of a homogeneous Poisson process $R$ of intensity $1$, or $2$-colour matchings of independent Poisson processes $R$ and $B$ of intensity $1$, on $\R^d$.

The following useful result says that minimal matchings are \emph{locally} unique, with the exception of the case $\gamma=1$ in dimension $d=1$ where a weaker statement holds.  A matching is \df{finitely supported} if only finitely many points are matched.

\begin{samepage}
\begin{prop}[Local uniqueness]\label{coincidence}
  Fix $d\geq 1$ and $\gamma\in\R$.  Let $R$ be a Poisson process of intensity $1$ on $\R^d$.  If $(d,\gamma)\neq(1,1)$ then almost surely there do not exist distinct finitely supported matchings $m$ and $m'$ of $R$ for which
  \begin{equation}\label{equal}f_\gamma[m]=f_\gamma[m'].\end{equation}
  If $(d,\gamma)=(1,1)$ then almost surely there do not exist finitely supported matchings $m$ and $m'$ with distinct matched sets that satisfy \eqref{equal}.
\end{prop}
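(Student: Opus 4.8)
The plan is to reduce the statement to a finite-dimensional genericity fact and then to a real-analyticity argument. First I would observe that there are only countably many finitely supported matchings of $R$: each has a finite matched set $S\subseteq R$, and for each of the countably many finite $S$ there are finitely many perfect matchings of $S$. Writing $Q_n:=\{x\in\R^d:|x|\le n\}$, any pair $m,m'$ of finitely supported matchings is supported in $Q_n$ for some $n$, so it suffices to bound, for each $n$, the probability that two distinct matchings supported in $Q_n$ have equal cost. I would condition on $\red(Q_n)=k$; by the defining property of the Poisson process the $k$ points are then i.i.d.\ uniform on $Q_n$, and I label them $x_1,\dots,x_k$. A matching supported in $Q_n$ is encoded by a pair $(S,\pi)$ with $S\subseteq\{1,\dots,k\}$ and $\pi$ a perfect matching of $S$, and there are finitely many such encodings. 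For two distinct encodings the relevant quantity is the cost difference
\[
D(x_1,\dots,x_k)=\sum_{\{i,j\}\in\pi} f_\gamma(|x_i-x_j|)-\sum_{\{i,j\}\in\pi'} f_\gamma(|x_i-x_j|).
\]
Since $(x_1,\dots,x_k)$ has a density with respect to Lebesgue measure on $(\R^d)^k$, and since there are only countably many choices of $n,k$ and finitely many pairs of encodings for each, a union bound reduces everything to showing that $\{D=0\}$ is a Lebesgue-null subset of $(\R^d)^k$.

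Second, I would invoke real analyticity. Off the diagonal set $\Delta=\{x_i=x_j \text{ for some } i\ne j\}$, which is null, each $|x_i-x_j|$ is positive and analytic and $f_\gamma$ is analytic on $(0,\infty)$, so $D$ is real-analytic on $(\R^d)^k\setminus\Delta$. The latter has finitely many connected components (one if $d\ge2$, and $k!$ ordered components if $d=1$). A nonzero real-analytic function on a connected open set has a null zero set, so it remains to prove that $D\not\equiv0$ on each component; this is the heart of the matter.

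Third, the core step, I would isolate a single variable. Since the two matchings are distinct, some edge lies in one but not the other; let $x_1$ be an endpoint of such an edge, so that its $\pi$-partner $a$ and $\pi'$-partner $b$ differ (allowing the value ``unmatched''). As the remaining edges do not involve $x_1$, the $x_1$-dependence of $D$ is captured by $h(x_1)=f_\gamma(|x_1-a|)-f_\gamma(|x_1-b|)$, with either term omitted when $x_1$ is unmatched on that side, and it suffices to show $h$ is non-constant. When only one term is present, $\nabla h=f_\gamma'(|x_1-a|)\,(x_1-a)/|x_1-a|\ne0$ because $f_\gamma$ is strictly increasing. When both are present with $a\ne b$: for $d\ge2$ the function $h$ vanishes on the perpendicular bisector $\{|x_1-a|=|x_1-b|\}$, so $h\equiv c$ would force $c=0$ and then $|x_1-a|\equiv|x_1-b|$ by injectivity of $f_\gamma$, which is absurd; for $d=1$ I would compute $h'$ directly, obtaining $h'\ne0$ because $f_\gamma'>0$ (partners on opposite sides of $x_1$) or because $f_\gamma'$ is strictly monotone (partners on the same side).

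The main obstacle, and the source of the exceptional case, is precisely the same-side sub-case in dimension $1$, where $h'=\pm\bigl(f_\gamma'(|x_1-a|)-f_\gamma'(|x_1-b|)\bigr)$ is nonzero exactly when $f_\gamma'$ is strictly monotone, i.e.\ exactly when $\gamma\ne1$ (for $\gamma\ne0$ one has $f_\gamma''(u)=\pm\gamma(\gamma-1)u^{\gamma-2}$, nonzero iff $\gamma\notin\{0,1\}$, while $f_0'(u)=1/u$). For $\gamma=1$ we have $f_1'\equiv1$ and $h$ may indeed be constant, reflecting the genuine ties $|r-b|+|r'-b'|=|r-b'|+|r'-b|$; this is exactly why for $(d,\gamma)=(1,1)$ one must restrict to matchings with \emph{distinct} matched sets. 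In that situation one can choose $x_1$ matched on one side and unmatched on the other, so a single term $f_\gamma(|x_1-a|)=|x_1-a|$ survives, with derivative $\pm1\ne0$, and non-constancy is restored on every ordered component. I would finally assemble the cases to conclude that $D\not\equiv0$ on every component whenever $(d,\gamma)\ne(1,1)$, and on every component with $S\ne S'$ when $(d,\gamma)=(1,1)$, which by the second paragraph completes the proof.
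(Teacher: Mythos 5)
Your proposal is correct and follows essentially the same strategy as the paper's proof: reduce to fixed finite configurations of conditionally i.i.d.\ points, view the cost difference as a function of the point positions, and show its zero set is Lebesgue-null by isolating a single point whose partners in the two matchings differ, with the same case analysis (matched in one but not the other; different partners) and the same identification of $(d,\gamma)=(1,1)$ as the exceptional case, handled by restricting to distinct matched sets. The differences are only in implementation: the paper runs a Fubini argument in the single coordinate $x_{1,1}$ and rules out intervals of constancy of a piecewise-analytic one-variable function (treating the degenerate coincidences $a=a'$ and $b=b'=0$ as null events), whereas you invoke the zero-set theorem for real-analytic functions on the connected components of configuration space and obtain non-constancy from the nonvanishing gradient, a coordinate-free perpendicular-bisector argument for $d\ge 2$, and strict monotonicity of $f_\gamma'$ for $d=1$, $\gamma\neq 1$.
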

\end{samepage}

\begin{proof}
  It suffices to consider matchings whose matched points lie within a fixed bounded set, and by scaling and conditioning we can take it to be the unit cube $[0,1]^d$.  Moreover, we can condition on the number of points in the cube, and consider two fixed matchings of them.  Therefore, let $x_1,\ldots,x_n$ be points in $[0,1]^d$, where $x_i=(x_{i,1},\ldots,x_{i,d})$.  Consider two fixed matchings of the set $\{1,\ldots,n\}$ and let $m$ and $m'$ be the corresponding matchings of $x_1,\ldots,x_n$.  Consider $\Delta:=f[m]-f[m']$ as a function of the positions of the points. It suffices to show that under the claimed conditions $\Delta\neq 0$ for a.e.\ choice of the variables $(x_{i,j})$ with respect to Lebesgue measure on $[0,1]^{dn}$.  We will do this using Fubini's theorem, by showing that $\Delta$ has a null set of zeros as a function of one variable, for almost all choices of the others.

  Firstly, suppose that the two matchings have distinct matched sets.  Without loss of generality suppose that $x_1$ is matched in $m$ but not in $m'$, and consider the dependence of $\Delta$ on the first coordinate $x_{1,1}$.  We have
  $$\Delta=f\Bigl(\sqrt{(x_{1,1}-a)^2+b^2}\Bigr)+c,$$
  where $a,b,c$ are functions of the other variables $(x_{i,j})_{(i,j)\neq (1,1)}$.
  Clearly for every choice of $a,b,c$ this has only finitely many zeros as a function of $x_{1,1}$.

  Secondly, suppose that the two matchings are distinct, and without loss of generality suppose that $x_1$ has different partners in $m$ and $m'$.  The dependence on $x_{1,1}$ is then of the form
  $$\Delta=f\Bigl(\sqrt{(x_{1,1}-a)^2+b^2}\Bigr)-f\Bigl(\sqrt{(x_{1,1}-a')^2+b'^2}\Bigr)+c,$$
  where $a,b,a',b',c$ are functions of the other variables.  This expression
  is piecewise real analytic
so it either has only finitely many zeros or it has non-trivial intervals of constancy.  Moreover, $a\neq a'$ unless the partners of $x_1$ in the two matchings have the same first coordinate, which happens only on a null set with respect to the other variables.  If $a\neq a'$ then $\Delta$ has intervals of constancy only if $\gamma=1$ and $b=b'=0$.  But for $d\geq 2$ the latter condition requires that two points have some coordinates equal, which again happens on a null set with respect to the other variables.
\end{proof}


Note that \cref{coincidence} immediately implies the analogous conclusion for $2$-colour matchings of independent Poisson processes $R$ and $B$, since any $2$-colour matching is a $1$-colour matching of the Poisson process $R\cup B$.

\begin{cor}[Distinct distances]\label{noneqd}
  Almost surely, all distances between pairs of points of a Poisson process are distinct.
\end{cor}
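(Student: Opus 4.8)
The plan is to deduce the corollary directly from the local-uniqueness result \cref{coincidence}, by encoding equality of two distances as equality of costs of two competing single-edge matchings. First I would fix a convenient exponent, say $\gamma=2$, for which the cost function $f_2(x)=x^2$ is strictly increasing on $(0,\infty)$. Since $2\neq 1$, the pair $(d,2)$ avoids the exceptional case $(1,1)$ in \cref{coincidence} for every dimension $d\geq 1$, so the strong conclusion (no distinct finitely supported matchings of equal cost) is available uniformly.

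Next, given two distinct unordered pairs of points $\{x,y\}$ and $\{u,v\}$ of $R$ (which share at most one point), I would consider the two finitely supported $1$-colour matchings $m=\{\e{x,y}\}$ and $m'=\{\e{u,v}\}$, each consisting of a single edge. Because the pairs are distinct as unordered pairs, these are distinct matchings. Strict monotonicity of $f_2$ gives $f_2[m]=|x-y|^2$ and $f_2[m']=|u-v|^2$, so the distances $|x-y|$ and $|u-v|$ coincide if and only if $f_2[m]=f_2[m']$. Hence the event that some two distinct pairs realize the same distance is contained in the event that there exist distinct finitely supported matchings of $R$ with equal cost. By \cref{coincidence} (applied with $\gamma=2$), the latter event has probability zero, which yields the corollary.

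The main thing to get right, rather than a genuine obstacle, is the bookkeeping of cases. I would confirm that the reduction handles pairs sharing an endpoint (for instance $|x-y|=|x-v|$ with $y\neq v$): the single-edge matchings $\{\e{x,y}\}$ and $\{\e{x,v}\}$ are still distinct matchings, so \cref{coincidence} applies without recourse to its weaker ``distinct matched sets'' variant. I would also note that the $1$-colour version suffices even for the implicit $2$-colour setting, since any $2$-colour matching of independent processes $R$ and $B$ is a $1$-colour matching of the superposition $R\cup B$, which is again Poisson. No Fubini argument or measure-theoretic estimate is needed at this stage, as all the analytic work has already been carried out inside the proof of \cref{coincidence}.
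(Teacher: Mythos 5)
Your proof is correct and is essentially the paper's own argument: the paper likewise deduces \cref{noneqd} by applying \cref{coincidence} to matchings consisting of a single edge. Your additional bookkeeping (fixing $\gamma=2$ to avoid the exceptional case $(d,\gamma)=(1,1)$, and handling pairs sharing an endpoint) only makes explicit what the paper leaves implicit.
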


\begin{proof}
  Apply \cref{coincidence} to matchings consisting of only one edge.
\end{proof}

In particular, \cref{noneqd} shows that the assumption in \cref{stable} applies a.s.\ to Poisson processes.

For invariant minimal matching schemes, perfectness is easily established thanks to the following fact.

\begin{lemma}[Unmatched points]\label{unm}\
  \begin{enumerate}[\rm (i)]
   \item Let $M$ be any $1$-colour invariant matching scheme of an invariant point process $R$ on $\R^d$.  Almost surely, $M$ is either perfect or has infinitely many unmatched points.
   \item Let $M$ be any invariant $2$-colour matching scheme of jointly ergodic invariant point processes of equal intensity $R$ and $B$ on $\R^d$.  Almost surely, either $M$ is perfect or has unmatched points of both colours.
   \end{enumerate}
\end{lemma}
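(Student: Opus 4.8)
The common engine for both parts is the elementary fact that a translation-invariant, locally finite measure on $\R^d$ is a constant multiple of Lebesgue measure, and so has total mass either $0$ or $\infty$. The plan is to apply this to the point process of unmatched points and to its intensity measure.

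For (i), write $U:=M^{-1}(\infty)$ for the set of unmatched points. Since $M$ is an invariant matching scheme of the invariant process $R$, and $U$ is obtained from $(R,M)$ by a translation-covariant measurable map, $U$ is itself an invariant simple point process. It therefore suffices to prove the general claim that any invariant point process $U$ satisfies $\#U\in\{0,\infty\}$ almost surely. I would fix an integer $k$ with $1\le k<\infty$ and consider the event $E_k=\{\#U=k\}$. Because the total number of points is unchanged by translations, $E_k$ is a translation-invariant event, so if $\P(E_k)>0$ then the conditional law $\P(\,\cdot\mid E_k)$ is again translation invariant. Under this conditional law the map $S\mapsto\E[U(S)\mid E_k]$ is a translation-invariant, locally finite measure of total mass $\E[\#U\mid E_k]=k<\infty$, hence it must be identically zero; but then $\E[\#U\mid E_k]=0$, contradicting $k\ge1$. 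Thus $\P(E_k)=0$ for every finite $k\ge1$, and summing over $k$ gives $\P(0<\#U<\infty)=0$, which is exactly the claim.

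For (ii), let $U_R$ and $U_B$ denote the unmatched red and blue points; both are invariant point processes, so by part (i) each is almost surely empty or infinite. The first key step is to show that $U_R$ and $U_B$ have equal intensity. Here I would apply the mass-transport principle to the transport that sends unit mass along each edge from its red endpoint to its blue endpoint: balancing mass in a unit cube yields that the intensity of matched red points equals the intensity of matched blue points. Subtracting these equal intensities from the (equal) intensities of $R$ and $B$ shows that $U_R$ and $U_B$ share a common intensity $\lambda$. If $\lambda=0$ then, since an invariant point process of intensity $0$ is empty almost surely, both $U_R$ and $U_B$ vanish a.s.\ and $M$ is perfect.

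It remains to treat $\lambda>0$, where the goal is to rule out unmatched points of only one colour, and this is where I expect the only real subtlety. Using the joint ergodicity hypothesis, the events $\{U_R=\emptyset\}$ and $\{U_B=\emptyset\}$ should each have probability $0$ or $1$. (If the full triple $(R,B,M)$ is not ergodic, one first passes to its ergodic decomposition; because the $(R,B)$-marginal is already ergodic and hence an extreme invariant measure, almost every component has the same $(R,B)$-marginal, in particular equal red and blue intensities, so the argument may be run componentwise.) Granting this $0/1$ law, the identity $\lambda_{U_R}=\lambda_{U_B}=\lambda>0$ forces $\P(U_R=\emptyset)<1$ and hence $\P(U_R=\emptyset)=0$, and symmetrically $\P(U_B=\emptyset)=0$; so when $M$ is not perfect both colours almost surely contain (in fact infinitely many) unmatched points. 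The main obstacle is precisely this final dichotomy: the intensity identity $\lambda_{U_R}=\lambda_{U_B}$ equates only expectations, and without an ergodicity (or ergodic-decomposition) input one could not exclude a scenario in which the two colours' unmatched sets appear on complementary events of positive probability.
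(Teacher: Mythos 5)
Your proposal is correct and takes essentially the same approach as the paper: part (i) is the paper's argument of conditioning on the translation-invariant event of exactly $k$ unmatched points and contradicting invariance via the zero-or-infinite mass dichotomy for an invariant intensity measure, and part (ii) matches the paper's combination of the mass-transport identity (equal intensities of unmatched red and blue points) with passage to the ergodic decomposition of $(R,B,M)$, using ergodicity of $(R,B)$ to ensure each component has the correct marginal. The only differences are cosmetic (you phrase the intensity argument via uniqueness of Haar measure, and you compute global intensities before decomposing rather than after).
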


\begin{proof}[Proof of \cref{unm}]
In the $1$-colour case (i), suppose for some positive finite $k$ that there are exactly $k$ unmatched points with positive probability.  Conditional on this event, the process of unmatched points is still invariant and has exactly $k$ points.  Let $p$ be the (conditional) probability that it has at least one point in the  unit cube; then the expected number of points is $0$ if $p=0$ and $\infty$ if $p>0$, giving a contradiction.

In the $2$-colour case (ii),  consider the ergodic decomposition of $(R,B,M)$ with respect to the group of all translations of $\R^d$ \cite[Theorem~10.26]{kallenberg}.  Since $(R,B)$ is ergodic, in each ergodic component the processes of red and blue points have the same joint law as $(R,B)$ (otherwise we would have a non-trivial ergodic decomposition of $(R,B)$), and the matching is an invariant matching scheme.  Therefore we can assume without loss of generality that $(R,B,M)$ is ergodic.
If there are unmatched red points with positive probability then a.s.\ there are infinitely many, and they form an ergodic point process of positive intensity.  The same applies to blue points.  But by \cite[Proposition~7]{hpps} (a simple consequence of the mass transport principle), the processes of unmatched red and unmatched blue points have equal intensity.
\end{proof}

\begin{cor}[Invariant perfectness]\label{inv-perfect}
  Let $d\geq 1$ and let $R$ be an invariant point process (\rs let $(R,B)$
  be jointly ergodic invariant point processes of equal intensity) on $\R^d$
  and let $\gamma\in[-\infty,\infty]$.
  Any invariant $1$-colour (\rs $2$-colour) $\gamma$-minimal
  matching scheme is perfect.
\end{cor}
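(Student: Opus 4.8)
The plan is to deduce this at once from \cref{unm}, using the fact that $\gamma$-minimality (for every $\gamma\in[-\infty,\infty]$, and indeed for $1\pm$ as well) demands that the number of unmatched points be minimized \emph{before} the cost. The strategy is: \cref{unm} already constrains the set of unmatched points to be either empty or ``large'' (infinitely many in the $1$-colour case; present in both colours in the $2$-colour case), and in any such large configuration one can exhibit two unmatched points that could simply be paired with each other. That modification strictly decreases the number of unmatched points on a finite compatible subset, contradicting minimality. No use of invariance or ergodicity is needed beyond what is already absorbed into \cref{unm}.

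First I would treat the $1$-colour case. By \cref{unm}(i), almost surely $M$ is either perfect or has infinitely many unmatched points. Suppose the latter, and pick two distinct unmatched points $x,y\in R$. The subset $R':=\{x,y\}$ is compatible with $M$, since unmatched points have no partners to include, and the restriction $M|_{R'}$ is the empty matching of $R'$, having two unmatched points. But the matching $\{\e{x,y}\}$ of the same finite set $R'$ has no unmatched points. Since $\gamma$-minimality requires $M|_{R'}$ to minimize $\#(\cdot)^{-1}(\infty)$ in the lexicographic comparison, this is a contradiction; hence $M$ is perfect.

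The $2$-colour case is entirely analogous using \cref{unm}(ii): almost surely $M$ is either perfect or has unmatched points of both colours. In the latter case choose an unmatched red point $r$ and an unmatched blue point $b$. The compatible pair $R':=\{r\}$, $B':=\{b\}$ has empty restriction $M|_{R',B'}$ with two unmatched points, whereas the matching $\{\e{r,b}\}$ of the same sets has none, again contradicting the priority of minimizing unmatched points. Therefore $M$ is perfect.

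I do not expect any genuine obstacle here; the only thing to check carefully is the bookkeeping around the definitions. Specifically, one must confirm that the chosen subsets are compatible (immediate, as the selected points are unmatched and so contribute no partners), and that the finite-set notion of $\gamma$-minimality invoked through restriction places the unmatched-point count strictly ahead of cost in the order $\preceq$; this holds by construction for all the cost functions $f_\gamma$ considered, uniformly in $\gamma$. The substantive content has already been done in establishing \cref{unm}, which is where the mass-transport and ergodic-decomposition arguments live.
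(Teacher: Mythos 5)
Your proposal is correct and is essentially the paper's own proof: the paper likewise observes that a $\gamma$-minimal $1$-colour matching can have at most one unmatched point (\rs a $2$-colour one can have unmatched points of at most one colour) — which is exactly the pair-two-unmatched-points contradiction you spell out via a finite compatible subset — and then applies \cref{unm}. Your write-up merely makes explicit the bookkeeping (compatibility of the chosen subset and the priority of the unmatched-point count in the order $\preceq$) that the paper leaves as immediate.
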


\begin{proof}
A $\gamma$-minimal $1$-colour matching can have at most one unmatched point, and a $\gamma$-minimal $2$-colour matching can have unmatched points of at most one colour.  Now apply \cref{unm}.
\end{proof}

For $2$-colour matchings in dimension~$1$, the following result from \cite{h} will be useful.  Note that for discrete sets $R,B\in \R$ (for instance, Poisson processes), each bounded interval contains only finitely many points.  Therefore a matching is local infinite if and only if some $x\in \R$ is crossed by infinitely many edges, which in turn is equivalent to the condition that every $x\in \R$ is.

\begin{prop}[Local infiniteness]\label{locinf}
  Let $R$ and $B$ be independent Poisson processes of intensity $1$ on $\R$.  Any invariant perfect $2$-colour matching scheme of $R$ and $B$ is a.s.\ locally infinite.
\end{prop}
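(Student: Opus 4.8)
The plan is to argue by contradiction using two auxiliary processes: the \emph{discrepancy walk} $W(t):=\red((0,t])-\blue((0,t])$, and the \emph{crossing count} $C(t):=\#\{\text{edges of }M\text{ crossing }t\}$, defined for $t\in\R$ with $t\notin R\cup B$ (which holds a.s.\ for each fixed $t$). By the remark immediately preceding \cref{locinf}, it suffices to prove that a.s.\ $C(0)=\infty$. First I would suppose the contrary, so that the translation-invariant event $\{C(0)<\infty\}=\{M\text{ locally finite}\}$ has positive probability. Passing to the ergodic decomposition of $(\red,\blue,\mat)$, and using as in the proof of \cref{unm} that each ergodic component has the same $(\red,\blue)$-marginal (namely the independent Poisson pair), I may assume $(\red,\blue,\mat)$ is ergodic; then local finiteness is a $0/1$ event, and I assume it has probability $1$ and seek a contradiction.

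The next step is an elementary but crucial crossing inequality. Fix $t>0$, and classify each point of $(0,t)$ according to whether its partner lies inside $(0,t)$, to the left of $0$, or to the right of $t$. Perfectness lets me write $\red((0,t))=\alpha+\beta^0_R+\beta^t_R$ and $\blue((0,t))=\alpha+\beta^0_B+\beta^t_B$, where $\alpha$ counts edges internal to $(0,t)$ and the $\beta$'s count points of each colour matched across $0$ or across $t$. Subtracting gives $W(t)=(\beta^0_R-\beta^0_B)+(\beta^t_R-\beta^t_B)$, while plainly $C(0)\ge\beta^0_R+\beta^0_B$ and $C(t)\ge\beta^t_R+\beta^t_B$. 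The triangle inequality then yields the key bound
$$C(0)+C(t)\ge|W(t)|,\qquad t>0,$$
valid a.s.\ for each fixed $t$, hence simultaneously for all $t\in\Q$.

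The crux is to upgrade this to a pointwise statement. Rearranging, $C(0)\ge|W(t)|-C(t)$ for all rational $t>0$. By invariance $C(t)\eqd C(0)$, which is a.s.\ finite by assumption, so $\P(C(t)\ge k)=\P(C(0)\ge k)\to 0$ as $k\to\infty$; meanwhile $W(t)$ is a difference of independent $\mathrm{Poisson}(t)$ variables, so $|W(t)|\to\infty$ in probability. Thus for any fixed $\lambda$, first choosing $k$ with $\P(C(0)\ge k)$ small and then $t$ large, the inclusion $\{|W(t)|>\lambda+k\}\cap\{C(t)\le k\}\subseteq\{|W(t)|-C(t)>\lambda\}$ forces $\P(|W(t)|-C(t)>\lambda)$ arbitrarily close to $1$. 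A reverse-Fatou argument along a rational sequence $t_n\to\infty$ then gives $\limsup_{n}(|W(t_n)|-C(t_n))\ge\lambda$ a.s., and intersecting over $\lambda\in\N$ yields $\limsup_t(|W(t)|-C(t))=\infty$ a.s.; combined with $C(0)\ge|W(t_n)|-C(t_n)$ this forces $C(0)=\infty$ a.s., contradicting local finiteness. Finally the equivalence noted before the proposition promotes ``$C(0)=\infty$'' to ``every $x$ is crossed by infinitely many edges,'' i.e.\ $M$ is a.s.\ locally infinite.

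I expect this last upgrade to be the main obstacle. The crossing inequality on its own only shows that the \emph{supremum} of the crossing count is infinite, which is entirely consistent with a stationary, everywhere-finite $C$; the real content is that stationarity prevents $C$ from tracking the diffusive $\sqrt t$ growth of $|W|$, so that $|W(t)|-C(t)$ must be large along some sequence for almost every realization. Making that heuristic into an almost-sure statement (rather than merely an in-probability or in-expectation one) is exactly what the reverse-Fatou step accomplishes, and it is where invariance is used in an essential way.
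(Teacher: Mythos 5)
Your proof is correct, but it takes a genuinely different route from the paper: the paper's entire proof of \cref{locinf} consists of citing \cite[Theorem~3(i)]{h} together with ergodic decomposition, so the fluctuation argument you give is precisely the content that this paper outsources to the earlier reference. The one ingredient you share with the paper is the reduction to the ergodic case, and you handle it correctly; this step is genuinely needed, since naively conditioning on the invariant event $\{M \text{ locally finite}\}$ would not obviously preserve the independent-Poisson law of $(R,B)$, whereas passing to ergodic components does, by the observation in the proof of \cref{unm}(ii). Your key inequality $C(0)+C(t)\ge |W(t)|$ (from perfectness plus the triangle inequality) and the tension between tightness of the stationary crossing count and the diffusive growth of $|W(t)|$ then constitute a complete proof; what your self-contained argument buys is that the mechanism becomes visible, rather than hidden in a citation. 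Two small remarks. First, the reverse-Fatou step is unnecessary: since $C(0)\ge |W(t)|-C(t)$ holds a.s.\ for each fixed rational $t$, you get directly
\[
\P\bigl(C(0)\ge\lambda\bigr)\;\ge\;\P\bigl(|W(t)|>\lambda+k\bigr)-\P\bigl(C(t)>k\bigr)\;\ge\;1-2\eps
\]
by first choosing $k$ with $\P(C(0)>k)<\eps$ (tightness, from stationarity and the contradiction hypothesis) and then taking $t$ large; letting $\eps\downarrow 0$ gives $\P(C(0)\ge\lambda)=1$ for every $\lambda$, and intersecting over $\lambda\in\N$ finishes. Second, and relatedly, the worry in your final paragraph is misplaced: the inequality bounds the crossing count at the \emph{fixed} location $0$ in terms of quantities at $t$, so there is never any danger of controlling only a supremum over locations; the ``upgrade'' you describe is nothing more than the tightness of $C(t)$, which you had already established.
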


\begin{proof}
  This follows from \cite[Theorem~3(i)]{h} together with
  ergodic decomposition \cite[Theorem~10.26]{kallenberg}.
\end{proof}

For an invariant $2$-colour matching scheme $M$ of $R$ and $B$, the Palm
process $(R^*,B^*,M^*)$ may be characterized as follows.  Let $\theta^x$
denote the translation by $x\in \R^d$.  Then for any non-negative measurable
map $h$ on the appropriate space,
$$\E \sum_{r\in R\cap [0,1)^d} h\bigl(\theta^{-r}(R,B,M)\bigr) = \lambda \,\E^* h\bigl(R^*,B^*,M^*\bigr) ,$$
where $\lambda$ is the intensity of the point process $R$.
If $R$ and $B$ are independent Poisson processes then the joint law of the Palm versions of the point processes themselves can be obtained by simply adding a red point at the origin: $(R^*,B^*)\eqd(R\cup\{0\},B)$.  Similar remarks apply to the $1$-colour case.  For more details see \cite[Section~2]{hpps} or \cite[Ch.~11]{kallenberg}.

We give a more detailed definition of finitary factors.  Let the matching scheme $M$ be a factor of $R$ and $B$.  Under the Palm measure, there is a map $H$ such that $M^*(0)=H(R^*,B^*)$ a.s.  Suppose that $H$ can be chosen, together with another map $L$ to $[0,\infty]$, in such a way that for any deterministic sets $r,b$ and any $r',b'$ that agree with them on the ball $\{x\in\R^d: |x|\leq L(r,b)\}$, we have $H(r,b)=H(r',b')$.  If in addition $L=L(R^*,B^*)<\infty$ a.s.\ then $M$ is a \df{finitary} factor of $(R,B)$ with \df{coding radius} $L$.

\subsection{Dimension one}

\newcommand{\f}[1]
{\begin{tikzpicture}[scale=.75,thick]
  \tikzstyle{every node}=[circle, draw, thin,
                        inner sep=0pt, minimum width=2mm]
  \useasboundingbox (-.5,-1) rectangle (3.5,1);
  #1
  \end{tikzpicture} }
\newcommand{\ff}[1]
{\begin{tikzpicture}[scale=.75,thick]
  \tikzstyle{every node}=[circle, draw, thin,
                        inner sep=0pt, minimum width=2mm]
  \useasboundingbox (-.5,-1.3) rectangle (3.5,1);
  #1
  \end{tikzpicture} }
\newcommand{\rrrr}{
\node[fill=red](a) at (0,0){};
\node[fill=red](b) at (.8,0){};
\node[fill=red](c) at (1.8,0){};
\node[fill=red](d) at (3,0){};
}
\newcommand{\rbbr}{
\node[fill=red](a) at (0,0){};
\node[fill=cyan!50!white](b) at (.8,0){};
\node[fill=cyan!50!white](c) at (1.8,0){};
\node[fill=red](d) at (3,0){};
}
\newcommand{\rbrb}{
\node[fill=red](a) at (0,0){};
\node[fill=cyan!50!white](b) at (.8,0){};
\node[fill=red](c) at (1.8,0){};
\node[fill=cyan!50!white](d) at (3,0){};
}
\newcommand{\rrbb}{
\node[fill=red](a) at (0,0){};
\node[fill=red](b) at (.8,0){};
\node[fill=cyan!50!white](c) at (1.8,0){};
\node[fill=cyan!50!white](d) at (3,0){};
}
\newcommand{\sep}{
\draw (a) to [bend left=60] (b);
\draw (c) to [bend left=60] (d);
}
\newcommand{\ent}{
\draw (a) to [bend left=60] (c);
\draw (b) to [bend left=60] (d);
}
\newcommand{\str}{
\draw[densely dashed] (b) to [bend right=60] (c);
\draw[densely dashed] (a) to [bend right=60] (d);
}
\newcommand{\sstr}{
\draw (b) to [bend right=60] (c);
\draw (a) to [bend right=60] (d);
}
\begin{figure}
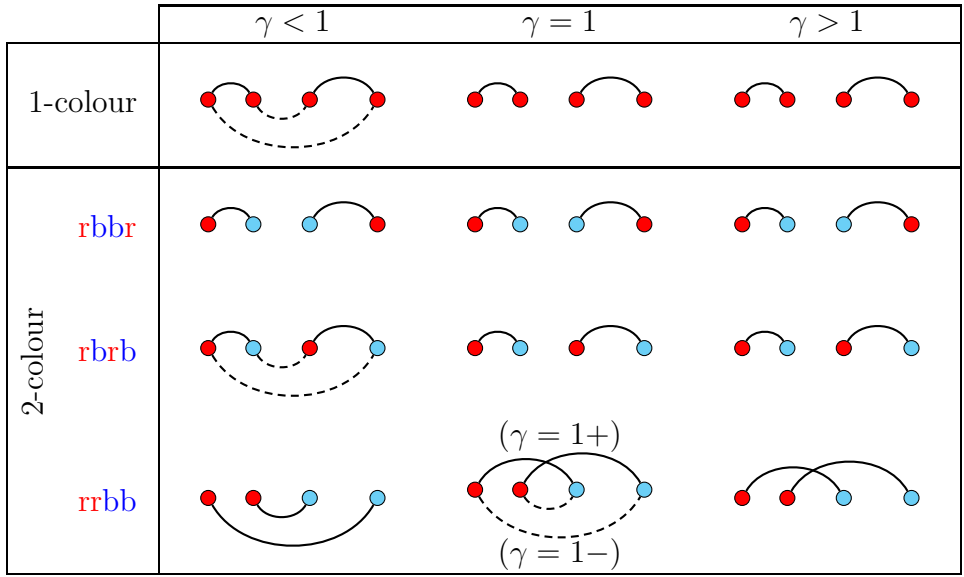

\centering
\begin{tabular}{|>{\centering\arraybackslash} m{.3cm}
>{\centering\arraybackslash} m{1cm}|
>{\centering\arraybackslash} m{3.2cm}>{\centering\arraybackslash} m{3.2cm}>{\centering\arraybackslash} m{3.2cm}|}
  \cline{3-5}

   \multicolumn{2}{c|}{}& $\gamma<1$ & $\gamma=1$ & $\gamma>1$ \\ \hline

 \multicolumn{2}{|c|}{$1$-colour} & \f{\rrrr\sep\str} & \f{\rrrr\sep} & \f{\rrrr\sep} \\ \hline

 & \textcolor{red}{r}\textcolor{blue}{bb}\textcolor{red}{r} & \f{\rbbr\sep} & \f{\rbbr\sep} & \f{\rbbr\sep} \\

 \rotatebox[origin=c]{90}{\;\;$2$-colour\!} & \textcolor{red}{r}\textcolor{blue}{b}\textcolor{red}{r}\textcolor{blue}{b} &
 \f{\rbrb\sep\str} & \f{\rbrb\sep} & \f{\rbrb\sep} \\

 & \textcolor{red}{rr}\textcolor{blue}{bb} & \f{\rrbb\sstr} &

 \ff{\rrbb\ent\str
 \node[rectangle,fill=none,draw=none] at (1.5,.95){$(\gamma=1+)$};
 \node[rectangle,fill=none,draw=none] at (1.5,-1.15){$(\gamma=1-)$};
 }
 & \f{\rrbb\ent} \\
 \hline
\end{tabular}
  \caption{Possible arrangements (separate, nested, or entwined) of two edges in a $\gamma$-minimal matching on $\R$, in the $1$-colour case (top row) and $2$-colour case (bottom three rows, according to the ordering of the colours).  The figures indicate only the order of the points, not their distances.  Solid lines and dashed lines indicate two different possible matchings;  in the bottom row with $\gamma=1$ both possibilities are $1$-minimal (with the tie broken as indicated for $\gamma=1\pm$), while in the other two cases the minimal choice depends on the distances between points.} \label{possibilities}
\end{figure}
The simple inequalities below are of central importance to the analysis of $\gamma$-minimal matchings in dimension $d=1$. Consider four points in $\R$ with successive distances $a,b,c$ between neighbouring pairs from left to right.  The following enables us to compare the cost of entwined versus straddling edges.

\begin{lemma}\label{ineq}
  Let $a,b,c>0$ and $\gamma\in\R$, and let $f=f_\gamma$ be as in \eqref{f}. We have
  \begin{align*}
  f(a+b+c)+f(b)&>f(a+b)+f(b+c), &\text{if }&\gamma>1;\\
  f(a+b+c)+f(b)&<f(a+b)+f(b+c), &\text{if }&\gamma<1.
  \end{align*}
\end{lemma}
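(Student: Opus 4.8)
The plan is to reduce the two-term inequality to the monotonicity of a single difference function and then read off the sign from the convexity or concavity of $f_\gamma$. First I would rearrange, moving $f(b+c)$ to the left and $f(b)$ to the right, so that for $\gamma>1$ the claim becomes
\[
f(a+b+c)-f(b+c) \;>\; f(a+b)-f(b),
\]
and the reverse for $\gamma<1$. Setting $h(y):=f(y+a)-f(y)$ for $y>0$, the left-hand side is $h(b+c)$ and the right-hand side is $h(b)$. Since $a,c>0$ we have $b+c>b$, so it suffices to prove that $h$ is strictly increasing when $\gamma>1$ and strictly decreasing when $\gamma<1$.

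Next I would establish this monotonicity from strict convexity. On $(0,\infty)$ the function $h$ is differentiable with $h'(y)=f'(y+a)-f'(y)$, so the sign of $h'$ is governed by whether $f'$ is increasing or decreasing, i.e.\ by the sign of $f''$. A direct computation of $f''_\gamma$ in the three cases of the definition \eqref{f} shows that $f_\gamma$ is strictly convex on $(0,\infty)$ precisely when $\gamma>1$ and strictly concave precisely when $\gamma<1$: for $\gamma>0$ one has $f''(x)=\gamma(\gamma-1)x^{\gamma-2}$, whose sign is that of $\gamma-1$; for the logarithm ($\gamma=0$) one has $f''(x)=-x^{-2}<0$; and for $\gamma<0$ the sign flip in $f=-x^\gamma$ gives $f''(x)=-\gamma(\gamma-1)x^{\gamma-2}<0$. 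In every case $f'$ is strictly monotone in the asserted direction, and since $y+a>y$ this forces $h'(y)=f'(y+a)-f'(y)$ to be strictly positive for $\gamma>1$ and strictly negative for $\gamma<1$, which completes the reduction and hence the proof.

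There is no serious obstacle here; the only point requiring care is the bookkeeping across the three pieces of the definition of $f_\gamma$, in particular verifying that the sign convention $f=-x^\gamma$ for $\gamma<0$ (together with the logarithm at $\gamma=0$) really yields strict \emph{concavity}, so that the dividing line between the two inequalities sits exactly at $\gamma=1$ and not at the sign change of the exponent. As an alternative to the differentiation argument one can proceed purely combinatorially: the multisets $\{b,\,a+b+c\}$ and $\{a+b,\,b+c\}$ have the same sum $a+2b+c$, and the former majorizes the latter (its larger element $a+b+c$ exceeds both $a+b$ and $b+c$ because $a,c>0$), so the inequality is exactly the two-point case of Karamata's inequality for the strictly convex (\rs concave) function $f_\gamma$. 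I would most likely present the difference-function version, since it is entirely self-contained and avoids invoking majorization.
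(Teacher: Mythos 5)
Your proof is correct and takes essentially the same route as the paper: the paper's one-line argument also reduces the inequality to strict monotonicity of a difference function of the form $f(t+\mathrm{const})-f(t)$, which is exactly the convexity/concavity of $f_\gamma$ across the dividing line $\gamma=1$. The only difference is that you verify this monotonicity explicitly via $f''_\gamma$ in the three cases of \eqref{f}, whereas the paper asserts it without computation.
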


\begin{proof}
  The quantity $f(t+c)-f(t)$ is strictly increasing in $t>0$ if $\gamma>1$ and strictly decreasing if $\gamma<1$; take $t=a+b$ and $t=a$.
\end{proof}

Obviously if $\gamma=1$ (so that $f$ is the identity) then
\begin{equation}f(a+b+c)+f(b)=f(a+b)+f(b+c).\label{equality}\end{equation}
For all $\gamma$, since $f_\gamma$ is strictly increasing it is also obvious that
\begin{equation}f(a)+f(c)<f(a+b)+f(b+c).\label{trivial}\end{equation}

Any two edges of a $\gamma$-minimal matching in $\R$ must be either
entwined, straddling, or separate.  In the $2$-colour case, one of the three
arrangements is disallowed, because we cannot match red-red and blue-blue.
For $\gamma\in(1,\infty)$, the ordering of the three costs is fixed:
separate $<$ entwined $<$ straddling, while the second inequality becomes an
equality for $\gamma=1$.  For $\gamma\in(-\infty,1)$,
entwined has the highest cost,
with the relative costs of the other two arrangements depending on the distances.  Consequently, for each choice of $\gamma$ and the colour-ordering of the points (rbbr, rbrb, rrbb, and the analogous sequences obtained by reversing the colours) there are either one or two possibilities, which we summarize in \cref{possibilities}.  Moreover, it is easy to identify the possibilities for $\gamma=1-$ and $\gamma=1+$, and to check that the possibilities for $\gamma=-\infty$ and $\gamma=\infty$ are identical to those for $\gamma\in(-\infty,1)$ and $\gamma\in(1,\infty)$ respectively.

\subsection{Limiting cases}

Next we justify the naming of the cases $\gamma=0,\pm\infty,1\pm$ by showing that they arise as appropriate limits of $\gamma$ for minimal matchings of finite sets, subject to certain regularity conditions.  These conditions vary slightly according to $\gamma$.

\begin{samepage}
\begin{prop}[Limits of $\gamma$]\label{cont-gamma}
  Let $d\geq 1$ and consider $1$- or $2$-colour matchings of fixed finite set(s) $R(,B)\subset \R^d$.
  \begin{enumerate}[\rm (i)]
    \item Fix $\gamma\in\R$ and suppose that there is a unique $\gamma$-minimal matching $M$.  Then for all $\mu\in\R$ sufficiently close to $\gamma$, $M$ is the unique $\mu$-minimal matching.
    \item Suppose that all pairs of points (of opposite colours in the $2$-colour case) have distinct distances. There is a unique $\infty$-minimal matching $M$, and for all $\mu$ sufficiently large, $M$ is the unique $\mu$-minimal matching.  The same statements hold for $(-\infty)$-minimal matching and $\mu$ sufficiently negative.
    \item Let $d=1$ and suppose that there is a unique $(1+)$-minimal matching $M$.  Then for all $\mu>1$ sufficiently close to $1$, $M$ is the unique $\mu$-minimal matching.  The same applies to $(1-)$-minimal matchings and $\mu<1$.
  \end{enumerate}
\end{prop}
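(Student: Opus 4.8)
The set(s) are fixed and finite, so there are only finitely many matchings, and the minimal number of unmatched points is a combinatorial quantity independent of the parameter ($\#R\bmod 2$ in the $1$-colour case, $|\#R-\#B|$ in the $2$-colour case). Hence for \emph{every} parameter value the minimal matchings are selected from the fixed finite family $\mathcal F$ of matchings attaining this minimum, all of which have the same number $n$ of edges; only the cost varies. To compare costs as the parameter moves I would use the normalized cost $g_\mu(x):=(x^\mu-1)/\mu$ for $\mu\neq0$ and $g_0(x):=\log x$, which is jointly continuous in $(\mu,x)$, increasing in $x$ for every $\mu$, and for matchings with a common number of edges satisfies: $g_\mu$-minimality coincides with $f_\mu$-minimality (since $\sum_e g_\mu(|e|)$ differs from $\sum_e f_\mu(|e|)$ by an additive and a positive multiplicative constant depending only on $n$). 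Writing $C_\mu(m):=\sum_{e\in m}g_\mu(|e|)$, part (i) is immediate: by hypothesis $C_\gamma(M)<C_\gamma(m)$ for each of the finitely many $m\in\mathcal F\setminus\{M\}$, and since each $\mu\mapsto C_\mu(m)$ is continuous and $\mathcal F$ is finite, these strict inequalities persist throughout a neighbourhood of $\gamma$.

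For (ii), distinct distances make the sorted length tuple $|m|_\downarrow$ strictly decreasing and hence a faithful encoding of $m$ (each length corresponds to a unique pair of points); as $\preceq$ is a total order, this yields a unique $\infty$-minimal matching $M$. For the limit I would compare $M$ with any other $m\in\mathcal F$ via their decreasing length tuples. Let $k$ be the first index where they differ; $\infty$-minimality forces the $k$th entry of $|m|_\downarrow$ to strictly exceed that of $|M|_\downarrow$, while earlier entries agree and cancel in $\sum_e|e|^\mu$. That strictly largest surviving length is the unique dominant base as $\mu\to\infty$, so $\sum_e|e|^\mu$ (which for $\mu>0$ orders $C_\mu$ the same way) is eventually larger for $m$ than for $M$; taking the largest threshold over the finite family gives a single $\mu_0$. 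The case $-\infty$ is symmetric, using $|m|_\uparrow$ and the fact that for $\mu<0$ the cost $-\sum_e|e|^\mu$ is dominated by the \emph{smallest} edge as $\mu\to-\infty$.

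For (iii) (the genuinely delicate case, owing to the ties at $\gamma=1$) I would argue in two steps. First, for $\mu$ near $1$ every $\mu$-minimal matching is $1$-minimal: the non-$1$-minimal members of $\mathcal F$ exceed the minimal $C_1$-cost by a fixed positive gap, and continuity preserves this gap near $\mu=1$. Second, among $1$-minimal matchings (which share the same total length, hence the same $C_1$) the tie-breaker is the derivative: for two such matchings $m,M$ the function $\phi_m(\mu):=C_\mu(m)-C_\mu(M)$ has $\phi_m(1)=0$ and, the length- and edge-count terms cancelling in the difference, $\phi_m'(1)=\sum_{e\in m}|e|\log|e|-\sum_{e\in M}|e|\log|e|$. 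I claim the $(1+)$-minimal matching $M$ is the \emph{strict} unique minimizer of $\sum_e|e|\log|e|$ over $1$-minimal matchings. Indeed, whenever a $1$-minimal matching contains a straddling pair, replacing it by the corresponding entwined pair preserves $1$-minimality (equal total length, by \eqref{equality}) while strictly decreasing $\sum_e|e|\log|e|$: writing the four ordered endpoints with successive gaps $a,b,c$, this is exactly the first-order ($\mu=1$) form of \cref{ineq}, with strictness from the strict convexity of $t\mapsto t\log t$ applied to the equal-sum pairs $\{a+b+c,\,b\}$ and $\{a+b,\,b+c\}$ (the former majorizing the latter, since $a,c>0$ by distinctness of the endpoints). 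Thus every minimizer is straddle-free, and as $M$ is by hypothesis the unique straddle-free, i.e.\ $(1+)$-minimal, matching, it is the strict unique minimizer. Hence $\phi_m'(1)>0$ for every other $1$-minimal $m$, so $\phi_m(\mu)>0$ for $\mu>1$ near $1$; combined with the first step, $M$ is the unique $\mu$-minimal matching there. The $(1-)$ statement is symmetric: entwined-free matchings \emph{maximize} $\sum_e|e|\log|e|$, giving $\phi_m'(1)<0$ and hence $\phi_m(\mu)>0$ for $\mu<1$ near $1$.

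I expect the main obstacle to be precisely this last case: one must pin down the correct analytic tie-breaker at the degenerate value $\gamma=1$ and match it to the purely combinatorial no-straddle/no-entwine definition of $(1\pm)$-minimality. The derivative identity together with the strict first-order form of \cref{ineq} is what bridges the two, and the strictness — which hinges on the positive gaps $a,c>0$ guaranteed by distinctness of the four endpoints of a straddling pair — is essential to obtain a strict, and therefore robust, separation of costs as $\mu\to1\pm$.
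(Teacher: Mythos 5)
Your parts (i) and (ii) are correct and essentially the paper's own argument: (i) is the same finite-family continuity argument (the paper packages it as \cref{cont}, and likewise uses the normalization $(x^\mu-1)/\mu$ to handle $\gamma=0$), and (ii) is the same dominant-term comparison of sorted length tuples (the paper phrases it as choosing $\mu$ with $\ell_k^\mu>\ell_1^\mu+\cdots+\ell_{k-1}^\mu$ for all $k$). Part (iii) is where you genuinely diverge. The paper stays at the parameter $\mu$ itself: by \cref{cont}, every $\mu$-minimal matching for $\mu>1$ near $1$ is $1$-minimal, and by \cref{ineq} a $\mu$-minimal matching for $\mu>1$ can contain no straddling pair, since straddling already costs strictly more than the entwined alternative under $f_\mu$; hence it is $(1+)$-minimal and equals $M$. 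You instead expand at $\mu=1$ and identify the derivative $\sum_e|e|\log|e|$ as the tie-breaker among $1$-minimal matchings, proving via a swap/majorization argument (strict convexity of $t\mapsto t\log t$) that the unique $(1+)$-minimal matching is its unique minimizer. Both routes are valid: the paper's is shorter because \cref{ineq} supplies a strict inequality at every $\mu\neq 1$, making the first-order expansion unnecessary, while your route isolates the precise functional that resolves the degeneracy at $\gamma=1$, which is conceptually illuminating. One point you should patch in the $2$-colour case: for a straddling pair in the configuration $r<b'<r'<b$ (opposite orientations), the ``corresponding entwined pair'' would match like colours and is not a legal $2$-colour matching, so your swap is colour-feasible only for same-orientation straddling pairs ($r<r'<b'<b$). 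This costs nothing: in the opposite-orientation configuration the separate alternative is strictly cheaper at $\mu=1$ (combine \eqref{equality} and \eqref{trivial}, or note $a+c<a+2b+c$), so such straddling pairs cannot occur in a $1$-minimal matching at all (cf.\ \cref{possibilities}); the analogous remark is needed for opposite-orientation entwined pairs in your $(1-)$ argument. With that one-line addition your proof is complete.
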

\end{samepage}

To apply \cref{cont-gamma} we must verify its assumptions.  \cref{coincidence} gives conditions for uniqueness of minimal matchings, while the following gives existence.

\begin{lemma}[Finite existence]\label{fin-exist}
  Let $d\geq 1$ and $\gamma\in[-\infty,\infty]$, or let $d=1$ and $\gamma\in\{1+,1-\}$.  For any finite $R(,B)\subset \R^d$ there exists a $\gamma$-minimal $1$-colour matching or $R$ (\rs $2$-colour matching of $R$ and $B$).
\end{lemma}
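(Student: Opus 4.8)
The plan is to dispatch the cases $\gamma\in\R\cup\{\pm\infty\}$ by a one-line finiteness argument, and then treat $\gamma=1\pm$ (which genuinely need more) by a local swapping argument built on \cref{ineq}.

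First I would observe that for a finite set $R$ (\rs pair $R,B$) the collection of all $1$- or $2$-colour matchings is \emph{finite}. For each $\gamma\in\R\cup\{\pm\infty\}$, the definition of $\gamma$-minimality asks that a fixed derived quantity be smallest under the total order $\preceq$: namely $\bigl(\#M^{-1}(\infty),f_\gamma[M]\bigr)$ when $\gamma\in\R$, and $\bigl(\#M^{-1}(\infty),|M|_\uparrow\bigr)$ (\rs $|M|_\downarrow$) when $\gamma=-\infty$ (\rs $\infty$). A finite set always carries a minimum for a total (pre)order, so a minimizer exists; this settles every ordinary case immediately.

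For $\gamma=1\pm$ in $d=1$, a $1$-minimal matching exists by the previous paragraph, and I would extract a $(1\pm)$-minimal one from the (finite, nonempty) family of $1$-minimal matchings. In the \textbf{$1$-colour} case this is automatic: for any four points the separate pairing is strictly cheaper than the entwined or straddling ones by \eqref{trivial}, so the restriction of a $1$-minimal matching to any two of its edges must be separate; hence no two edges straddle or entwine, and every $1$-minimal $1$-colour matching is simultaneously $(1+)$- and $(1-)$-minimal. In the \textbf{$2$-colour} case I would instead select, among all $1$-minimal matchings, one that \emph{minimizes} $\Phi(M):=\sum_{\e{x,y}\in M}|x-y|^2$ for the $(1+)$ case, and one that \emph{maximizes} $\Phi$ for the $(1-)$ case, and claim these have the desired property.

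The heart of the argument is a single local swap. Suppose the chosen matching contains two straddling edges. By \eqref{trivial} a $1$-minimal matching admits no strictly cheaper rearrangement of any two of its edges; combined with the requirement that each edge be bichromatic, this forces the four endpoints of the straddling pair to be colour-ordered rrbb or bbrr (the orderings on which a separate pairing is unavailable). On this pattern, exchanging the two blue (equivalently red) partners turns the straddling pair into an entwined pair. By \eqref{equality} the exchange preserves total length, so the new matching is again $1$-minimal, while \cref{ineq} applied with $\gamma=2$ gives $(a+b+c)^2+b^2>(a+b)^2+(b+c)^2$, so $\Phi$ strictly decreases -- contradicting minimality of $\Phi$. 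Hence the $\Phi$-minimizer has no straddling edges and is $(1+)$-minimal. The $(1-)$ case is symmetric: an entwined pair in a $1$-minimal matching is likewise forced to be rrbb/bbrr-ordered, the same exchange turns it into a straddling pair of equal length and strictly larger $\Phi$, contradicting maximality of $\Phi$. I expect the only real obstacle to be precisely this colour bookkeeping -- checking that the swap is colour-valid exactly on the configurations that survive in a $1$-minimal matching, and that the configurations on which it would fail are the ones excluded by \eqref{trivial}; this is the case analysis already summarized in \cref{possibilities}, after which \eqref{equality} and \cref{ineq} do the rest.
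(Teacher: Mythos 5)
Your proof is correct, but for the critical cases $\gamma=1\pm$ it takes a genuinely different route from the paper. The paper, like you, dispatches $\gamma\in[-\infty,\infty]$ by finiteness, but it then obtains $(1\pm)$-minimal matchings as perturbative limits: for each $\mu>1$ a $\mu$-minimal matching exists by the finite case; by the continuity lemma (\cref{cont}), for $\mu>1$ close enough to $1$ any such matching is also $1$-minimal; and by \cref{ineq} a $\mu$-minimal matching with $\mu>1$ has no straddling edges, hence is $(1+)$-minimal (and symmetrically for $1-$). Your argument instead stays entirely at $\gamma=1$ and breaks ties inside the finite, nonempty set of $1$-minimal matchings by a secondary functional $\Phi=f_2[\,\cdot\,]$: the colour bookkeeping correctly shows that any straddling (\rs entwined) pair surviving in a $1$-minimal $2$-colour matching must sit on an rrbb/bbrr pattern, where the swap to the entwined (\rs straddling) configuration is colour-valid, cost-preserving by \eqref{equality}, and strictly decreases (\rs increases) $\Phi$ by \cref{ineq} at $\gamma=2$ -- so the $\Phi$-extremal $1$-minimal matching is $(1\pm)$-minimal. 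What each approach buys: the paper's is shorter given that \cref{cont} is already in hand, and it directly realizes the interpretation of $1\pm$ as limits $\mu\to1\pm$; yours is more self-contained and purely combinatorial, avoiding \cref{cont} altogether, at the price of the explicit colour case analysis (which is exactly the content of \cref{possibilities}, so nothing new is needed). One negligible imprecision: in your $1$-colour step, \eqref{trivial} only gives separate $<$ entwined; for separate $<$ straddling at $\gamma=1$ you also need \eqref{equality} (or the obvious $a+c<a+2b+c$), which you in effect use later anyway.
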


In particular, for finite subsets of Poisson processes, \cref{cont-gamma}(i)
with $\gamma=0$ combined with \cref{coincidence,fin-exist} justifies the
notation $0$-minimal for $f=\log$, and also shows there are no further
limiting cases to be considered besides those under discussion here.
\cref{noneqd} similarly provides the distinct distances condition for
\cref{cont-gamma}(ii).
Verifying the assumptions of
\cref{cont-gamma}(iii)
is a little more delicate.  \cref{fin-exist} gives existence of
$(1\pm)$-minimal matchings.  For uniqueness, \cref{coincidence} shows that the set of matched points is uniquely determined, so we can restrict attention to perfect matchings of that set. Uniqueness then follows from a more detailed analysis of the various cases,
which can be found in the proofs of
\cref{2colclass}(i,iii) and \cref{1colclass}(i) later in this article.
\cref{cont-gamma} is not needed for these proofs, nor for any other results
of the article.

The proofs of \cref{cont-gamma,fin-exist} both use the next simple fact.

\begin{lemma}\label{cont}
   Let $g$ and $g_\eta$ be non-decreasing functions with $g_\eta\to g$ pointwise as $\eta\downarrow 0$, and consider $1$- or $2$-colour matchings of fixed finite set(s) $R(,B)\subset \R^d$.  For all sufficiently small $\eta$, every $g_\eta(\cdot)$-minimal matching is also $g(\cdot)$-minimal.
\end{lemma}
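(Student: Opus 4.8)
The plan is to exploit the finiteness of the point sets, which makes the space of matchings finite and reduces the whole statement to an elementary separation argument. First I would observe that the number of unmatched points of a matching is a purely combinatorial quantity, depending only on which points are matched and not at all on the cost function (in the $2$-colour case it is bounded below by $|\#R-\#B|$, in the $1$-colour case by the parity of $\#R$). Hence the collection $\mathcal{M}_0$ of matchings achieving the minimal possible number of unmatched points is a \emph{fixed} finite set, the same for $g$ and for every $g_\eta$. By definition of lexicographic minimality, both the $g$-minimal and the $g_\eta$-minimal matchings lie in $\mathcal{M}_0$, and within $\mathcal{M}_0$ they are precisely the minimizers of $g[\,\cdot\,]$, respectively $g_\eta[\,\cdot\,]$. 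Thus the first coordinate of the lexicographic order is already settled uniformly in $\eta$, and only the cost needs to be controlled in the limit.

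Next I would pass to the limit on costs. Each matching in $\mathcal{M}_0$ has only finitely many edges, of fixed lengths, so the pointwise convergence $g_\eta\to g$ yields $g_\eta[m]\to g[m]$ for every fixed $m\in\mathcal{M}_0$, being a finite sum of convergent terms. Write $V:=\min_{m\in\mathcal{M}_0} g[m]$ and let $\mathcal{M}_g\subseteq\mathcal{M}_0$ be the set of $g$-minimal matchings. If $\mathcal{M}_g=\mathcal{M}_0$, then every matching in $\mathcal{M}_0$ is $g$-minimal and the conclusion is immediate, since any $g_\eta$-minimal matching lies in $\mathcal{M}_0$. Otherwise let $V'$ be the second-smallest value attained by $g[\,\cdot\,]$ on the finite set $\mathcal{M}_0$; then $V'>V$, and I set $\delta:=(V'-V)/3>0$.

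The final step is the separation argument. Because $\mathcal{M}_0$ is finite, the convergence above holds uniformly over $\mathcal{M}_0$: for all sufficiently small $\eta$ we have $|g_\eta[m]-g[m]|<\delta$ for every $m\in\mathcal{M}_0$ simultaneously. Then every $m\in\mathcal{M}_g$ satisfies $g_\eta[m]<V+\delta$, while every $m\in\mathcal{M}_0\setminus\mathcal{M}_g$ satisfies $g_\eta[m]>g[m]-\delta\ge V'-\delta>V+\delta$. Consequently any matching minimizing $g_\eta[\,\cdot\,]$ over $\mathcal{M}_0$ must lie in $\mathcal{M}_g$. Since a $g_\eta$-minimal matching is exactly such a minimizer, it is $g$-minimal, as required.

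As for difficulty: there is no serious obstacle here, the argument being entirely elementary. The only points meriting care are the two structural observations that make the limit harmless, namely that the leading (unmatched-points) coordinate of the lexicographic order is independent of the cost function, so that $\mathcal{M}_0$ is common to all $\eta$, and the trivial case $\mathcal{M}_g=\mathcal{M}_0$, which must be separated out so that the gap $V'-V$ is genuinely positive before one divides by it.
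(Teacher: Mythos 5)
Your proof is correct and follows essentially the same route as the paper's: fix the (cost-independent) minimal number of unmatched points, establish a positive gap $\delta$ between the $g$-minimal cost and all other costs at that level, and use finiteness to make $g_\eta[\,\cdot\,]$ uniformly $\delta$-close to $g[\,\cdot\,]$ so the gap survives. The only cosmetic differences are that the paper obtains uniformity by summing $|g_\eta(\ell_i)-g(\ell_i)|$ over all pairwise distances rather than over the finite set of matchings, and it handles your degenerate case $\mathcal{M}_g=\mathcal{M}_0$ implicitly (the gap condition is then vacuous).
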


\begin{proof}
  Every $g(\cdot)$-minimal matching $M$ has the same number of unmatched points $u$ and the same cost $g[M]:=c$, say.
  Let $\delta>0$ be such that for every other matching $m$ with $u$ unmatched points,
  $$g[m]-c>\delta.$$
  Let $\ell_1,\dots,\ell_n$ be the distances between all pairs of points of $R\cup B$ (with multiplicities), and take $\eta$ sufficiently small that
  $$
   \sum_i \bigl|g_\eta(\ell_i)-g(\ell_i)\bigr|<\delta/2.
  $$
  This ensures that $|g_\eta[m]-g[m]|<\delta/2$ for every matching $m$.  We deduce that for every $g(\cdot)$-minimal matching $M$ and non-$g(\cdot)$-minimal matching $m$ with $u$ unmatched points we have
  $g_\eta[m]-g_\eta[M]>\delta-2\delta/2>0$.  This implies that $m$ cannot be $g_\eta(\cdot)$-minimal,
  as required.
\end{proof}

\begin{proof}[Proof of \cref{fin-exist}]
  The cases $\gamma\in [-\infty,\infty]$ are trivial: there are only finitely many matchings, so at least one of them must be minimal.  For $d=1$ and $\gamma=1+$, by the previous case, for every $\mu>1$ there exists a $\mu$-minimal matching.  By \cref{cont}, for $\mu>1$ sufficiently close to $1$, any such matching $M$ is also $1$-minimal.
  But by \cref{ineq} (see also \cref{possibilities}),
such an $M$ has no straddling edges, so it is $(1+)$-minimal.  An analogous argument applies for $\gamma=1-$.
\end{proof}

\begin{proof}[Proof of \cref{cont-gamma}]
 \sloppypar For $\gamma\neq 0$ the result of (i) follows immediately from \cref{cont}, since $f_\mu\to f_\gamma$ as $\mu\to\gamma$ and there is at least one $\mu$-minimal matching.  For $\gamma=0$, note that applying an increasing affine transformation to a function $f$ does not change the notion of $f(\cdot)$-minimality, so for $\mu\neq 0$ we can replace $f_{\mu}$ with the function $g_\mu(x)=(x^\mu-1)/\mu$, and observe that $g_{\mu}(x)\to \log x$ as $\mu\to 0$.

 Turning to (ii), existence of $(\pm\infty)$-minimal matchings follows from \cref{fin-exist}.
 Uniqueness follows from the distinct distances assumption because $|m|_\uparrow$ and $|m'|_\uparrow$ differ for distinct matchings $m\neq m'$, and similarly for $|\cdot|_\downarrow$.

 Let $\ell_1<\cdots<\ell_n$ be the ordered distances between all pairs of points (of opposite colours in the $2$-colour case).  For the $\infty$-minimal case it suffices to take $\mu$ large enough that $\ell_k^\mu > \ell_1^\mu+\cdots+\ell_{k-1}^\mu$ for all $k$, which is achieved if $n^{1/\mu}<\min_k \ell_k/\ell_{k-1}$.  Similarly for the $(-\infty)$-minimal case we take $\mu$ negative enough that $-\ell_k^\mu < -\ell_{k+1}^\mu -\cdots -\ell_n^\mu$ for all $k$.

  Finally, for (iii), suppose $M$ is the unique $(1+)$-minimal matching.  By
  \cref{cont}, for $\mu>1$ sufficiently close to $1$, every $\mu$-minimal
  matching is 1-minimal.  But by \cref{ineq}
 (see also \cref{possibilities}),
  any such matching has no straddling edges, so it must be $M$.  The argument for $1-$ is analogous.
\end{proof}

\subsection{Scale invariance}

The functions $f_\gamma$ have a scale-invariance property: for any $s>0$, the expression $f_\gamma(sx)$ can be written as an increasing affine function of $f_\gamma(x)$.  But applying an increasing affine map to the cost function does not change the minimal matchings.
Therefore, the set of $\gamma$-minimal $1$-colour matchings of a scaled set $sR$ is precisely the set of scaled matchings $sM$, for $M$ a $\gamma$-minimal matching of $R$, and similarly for the $2$-colour case.  The next result shows that essentially no other functions have this property, justifying our focus on  $f_\gamma$.  Again, this result is not needed for the proofs of the main theorems.

\begin{samepage}
\begin{prop}[Scale invariance]\label{scale}
  Let $f:(0,\infty)\to\R$ be continuously differentiable, non-decreasing and not constant. Suppose that for every finite set $R\subset \R$ and every $s>0$, we have that $M$ is an $f(\cdot)$-minimal $1$-colour matching of $R$ if and only if $sM$ is an $f(\cdot)$-minimal $1$-colour matching of $sR$.  Then there exist $a,b,\gamma\in\R$ such that
  $$f(x)=\begin{cases}
    ax^\gamma+b,& \gamma\neq 0,\\
    a\log x +b,&\gamma=0.
  \end{cases}
  $$
\end{prop}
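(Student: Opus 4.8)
The plan is to distil the scale-invariance hypothesis into a single scale-invariant sign condition, convert that (in the substantive case) into an ordinary differential equation for $f$, and solve it.

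First I would reduce everything to a two-edge comparison. Take $R=\{0,a,a+b,a+b+c\}$ with gaps $a,b,c>0$. Its three $1$-colour matchings are \emph{separate} (lengths $a,c$), \emph{entwined} (lengths $a+b,b+c$) and \emph{straddling} (lengths $a+b+c,b$). By \eqref{trivial} the separate matching is always strictly cheaper than the entwined one, so the entwined matching is never minimal; hence the set of $f(\cdot)$-minimal matchings of $R$ is governed entirely by the sign of
\[
\Phi(a,b,c):=f(a+b+c)+f(b)-f(a)-f(c),
\]
the separate matching being the unique minimum when $\Phi>0$, the straddling matching when $\Phi<0$, and both when $\Phi=0$. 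Applying the hypothesis to $R$ and $sR$ forces $\sign\Phi(a,b,c)=\sign\Phi(sa,sb,sc)$ for every $s>0$ and every $a,b,c>0$. In particular the zero set $Z:=\{\Phi=0\}\subseteq(0,\infty)^3$ is a cone.

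The substantive case is $Z\neq\emptyset$, which is exactly the case that $f$ is not convex: for convex $f$ the straddling matching is at least as expensive as the entwined one (same total edge length but more spread out), which already exceeds the separate one by \eqref{trivial}, so $\Phi>0$ throughout. (For $f=f_\gamma$ this recovers \cref{ineq}: $Z=\emptyset$ precisely for $\gamma\ge1$.) Assuming $f'>0$ one has $\Phi_b=f'(a+b+c)+f'(b)>0$, so by the implicit function theorem $Z$ is a $C^1$ surface. Since $Z$ is a cone, $t\mapsto\Phi(tz)$ vanishes identically for $z\in Z$, and differentiating at $t=1$ yields Euler's relation $a\Phi_a+b\Phi_b+c\Phi_c=0$ on $Z$. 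Writing $h(x):=xf'(x)$ and simplifying, this becomes
\[
h(a+b+c)+h(b)=h(a)+h(c)\qquad\text{on }Z.
\]
Thus, with $\Theta[\phi]:=\phi(a+b+c)+\phi(b)-\phi(a)-\phi(c)$, each of the constant function, $f$, and $h$ lies in $V:=\{\phi\in C^1:\Theta[\phi]=0\text{ on }Z\}$. I would then argue that $\dim V=2$: because $Z$ is a conical $C^1$ surface, the four-term relation $\Theta[\phi]=0$ propagates the values of any $\phi\in V$ along $Z$, so $\phi$ is determined on all of $(0,\infty)$ by two constants (its value and derivative at a single point). Since $1$ and $f$ are independent elements of $V$, it follows that $h=\alpha f+\beta$ for some $\alpha,\beta\in\R$.

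Finally I would solve $xf'(x)=\alpha f(x)+\beta$; its solutions are $f(x)=ax^{\alpha}-\beta/\alpha$ for $\alpha\neq0$ and $f(x)=\beta\log x+b$ for $\alpha=0$, which is exactly the claimed form with $\gamma=\alpha$. The main obstacle is the dimension count $\dim V\le2$: one must make the geometric ``propagation along $Z$'' rigorous, turning it into the statement that two initial data determine $\phi$ everywhere, which I expect to accomplish using the implicit parametrisation $b=b(a,c)$ of $Z$ and the resulting surjectivity of $(a,c)\mapsto\bigl(a,c,b(a,c),a+b(a,c)+c\bigr)$. A secondary point requiring care is the degenerate regime $Z=\emptyset$ (constant sign of $\Phi$, i.e.\ $f$ convex), where the four-point comparison alone imposes no constraint on $f$ and a separate argument is needed; the case of flat pieces of $f$ (where $f'=0$) must likewise be excluded or handled directly.
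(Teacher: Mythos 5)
Your plan has a fatal structural gap: the reduction to four-point configurations cannot prove the proposition, even in principle. Whenever $f$ is strictly increasing and strictly convex, the straddling matching costs at least as much as the entwined one (the length pair $(a+b+c,\,b)$ majorizes $(a+b,\,b+c)$), which by \eqref{trivial} costs more than the separate one; so $\Phi>0$ throughout $(0,\infty)^3$ and the separate matching is the unique $f(\cdot)$-minimal matching of \emph{every} four-point set. Since three- and two-point sets are likewise uninformative for strictly increasing $f$, the scale-invariance hypothesis restricted to sets of at most four points is satisfied by every strictly increasing strictly convex $f$ --- for instance $f(x)=e^x$ or $f(x)=x^2+x^3$ --- none of which is of the claimed form. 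So the regime you defer as ``secondary'' ($Z=\emptyset$) is not a boundary case needing care: it is precisely where your method extracts no information, and it contains most of the functions the proposition must rule out. Any correct proof must use configurations of at least five points. (Separately, even in your substantive case $Z\neq\emptyset$, the dimension count $\dim V\le 2$ --- that the relation $\Theta[\phi]=0$ on the cone $Z$ ``propagates'' two initial data to all of $(0,\infty)$ --- is asserted rather than proved; $Z$ need not be connected, nor need its projections reach every scale.)

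This is exactly why the paper works with \emph{five}-point sets. Taking gaps $x,w,y,z\in[1,1+\delta]$, with $\delta$ chosen so that $2f(1+\delta)<f(1)+f(2)$, any minimal matching must use only nearest-neighbour edges and leave one point unmatched, and the three candidate matchings have costs $f(x)+f(y)$, $f(x)+f(z)$, $f(w)+f(z)$. This pits sums of $f$-values against one another in a way that is informative for \emph{every} non-decreasing $f$, convex or not: scale invariance upgrades $f(x)+f(y)\le f(z)+f(w)$ to $f(sx)+f(sy)\le f(sz)+f(sw)$ (\cref{L1}), and a differentiation argument using the local inverse of $f$ then shows $f'(sx)/f'(x)$ is independent of $x$ (\cref{L2}), whence $f'$ is multiplicative up to normalization and so a power, giving the result. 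If you wish to salvage your ODE strategy, the four-point sign function $\Phi$ must be replaced by a five-point comparison of this kind; the ODE $xf'(x)=\alpha f(x)+\beta$ at the end of your argument is fine, but everything feeding into it needs the larger configurations.
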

\end{samepage}

%
%
%
%

We break the proof into several lemmas.
First, since $f$ is not constant, note that $f'(t)>0$ for some $t>0$.
Replacing $f(x)$ by $f(tx)$, we may assume (for notational convenience)
that $f'(1)>0$.
In particular, this implies $f(2)>f(1)$. Choose $\delta>0$ such that
\begin{align}
  \label{hamlet}
2f(1+\delta) < f(2)+f(1),
\end{align}
and, furthermore,
\begin{align}\label{laertes}
  f'(x)>0, \qquad x\in[1,1+\delta].
\end{align}
Fix this $\delta$ for the remainder of the argument.

\begin{lemma}
  \label{L1}
Suppose that $x,y,z,w\in[1,1+\delta]$ and $s>0$. Then
\begin{align}
  \label{ophelia}
f(x)+f(y) \le f(z)+f(w)
\implies
f(sx)+f(sy) \le f(sz)+f(sw).
\end{align}
\end{lemma}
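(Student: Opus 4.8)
The plan is to \emph{encode} the numerical inequality $f(x)+f(y)\le f(z)+f(w)$ as a statement about $f(\cdot)$-minimality of a concrete finite point set on $\R$, and then to read off the scaled inequality by invoking the standing scale-invariance hypothesis (which asserts that the set of minimal matchings of $sR$ is exactly the set of scaled minimal matchings of $R$). If I can build a finite $R\subset\R$ carrying two perfect matchings $M_1,M_2$ whose edge-length multisets are $\{x,y\}$ and $\{z,w\}$ (possibly together with further edges common to both, hence irrelevant to the comparison), and such that $M_1,M_2$ are the \emph{only} candidates for minimality, then ``$M_1$ is minimal'' is equivalent to $f(x)+f(y)\le f(z)+f(w)$. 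Applying scale invariance, ``$sM_1$ is minimal for $sR$'' is equivalent to $f(sx)+f(sy)\le f(sz)+f(sw)$, and since the two minimality statements coincide, the implication (indeed equivalence) of \cref{L1} drops out immediately.

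First I would isolate the cleanest competition. On four collinear points with successive gaps $a,b,c$, the three matchings are separate $\{a,c\}$, entwined $\{a+b,b+c\}$ and straddling $\{b,a+b+c\}$; since $f$ is non-decreasing one has $f(a)\le f(a+b)$ and $f(c)\le f(b+c)$, so separate always weakly beats entwined and the only genuine two-way competition is separate versus straddling. Reading off minimality of each and applying scale invariance yields, for all $a,b,c,s>0$,
$$f(a)+f(c)\le f(b)+f(a+b+c)\iff f(sa)+f(sc)\le f(sb)+f\bigl(s(a+b+c)\bigr),$$
that is, the sign of $f(b)+f(a+b+c)-f(a)-f(c)$ is constant along rays through the origin in $(a,b,c)$-space. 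To reach the genuine two-pair comparison of \cref{L1}, in which all four arguments sit in the narrow window $[1,1+\delta]$, I would then pass to a six-point configuration $P_1<\cdots<P_6$ with a large central gap, choosing the two competing matchings to differ along a single alternating hexagon: the two short edges of one are $\{x,y\}$, those of the other $\{z,w\}$, and each matching carries exactly one long edge, the two long edges arranged (via a balancing relation such as $x+z=y+w$) to have equal length, so that their $f$-costs cancel and the comparison collapses to $f(x)+f(y)$ versus $f(z)+f(w)$. The general, unbalanced case would then follow by varying the configuration and using continuity of $f$.

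The hard part is guaranteeing that the two intended matchings are \emph{the} minimizers, with every other matching strictly more expensive, and that this persists \emph{at every scale} $s$ --- for it is precisely the preservation of minimality under scaling that powers the argument, yet a blocking inequality verified at $s=1$ need not survive scaling, since it involves $f$ evaluated at the scaled lengths. This is where I expect \eqref{hamlet} and \eqref{laertes} to be essential: \eqref{laertes} gives strict monotonicity of $f$ across $[1,1+\delta]$, excluding ties among the short edges, while \eqref{hamlet}, namely $2f(1+\delta)<f(1)+f(2)$, penalizes the ``balanced'' length-$(1{+}\delta)$ alternatives against an unbalanced length-$1$/length-$2$ pair and so helps kill spoiler matchings. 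A structural obstruction sharpens the difficulty: on $\R$ with $\delta<1$ no point has two neighbours at distance in $[1,1+\delta]$ on the same side, so the graph of short pairs is a disjoint union of paths and admits \emph{at most one} perfect matching with all edges short. Two competing all-short matchings therefore cannot coexist in a single picture, which is exactly why a pair of long, mutually cancelling edges must be introduced; controlling those long edges uniformly in $s$, and bridging the gap between the ray-invariance displayed above (whose second pair always contains the large element $a+b+c$) and the genuinely local comparison of \cref{L1}, is the delicate core of the proof.
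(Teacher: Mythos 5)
Your high-level strategy---encoding the inequality as the statement that a specific matching of a concrete finite configuration is $f(\cdot)$-minimal, and then letting the scale-invariance hypothesis transport minimality to $sR$---is exactly the paper's. But your execution has a fatal gap, and it is precisely the part you defer as the ``delicate core''. Take your six-point picture with gaps, in order, $z,x,G,w,y$ (any arrangement realizing your balanced hexagon meets the same problem). Your intended matching with short edges $\{x,y\}$ is $\{\e{P_2,P_3},\e{P_5,P_6},\e{P_1,P_4}\}$, of cost $f(x)+f(y)+f(x{+}z{+}G)$, while the nearest-neighbour perfect matching $\{\e{P_1,P_2},\e{P_3,P_4},\e{P_5,P_6}\}$ costs $f(z)+f(G)+f(y)$. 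For yours to be minimal you would need $f(x)+f(x{+}z{+}G)\le f(z)+f(G)$; since $f$ is non-decreasing this forces $f(x)\le f(z)$, which by \eqref{laertes} is false whenever $x>z$ (half the inputs the lemma must handle), and even when $x\le z$ it demands that $f$ grow less over $[G,\,x{+}z{+}G]$ than over $[x,z]\subseteq[1,1+\delta]$, which fails for admissible $f$ such as $f(t)=t$. So your intended matchings are essentially never minimal, irrespective of whether $f(x)+f(y)\le f(z)+f(w)$ holds, and the planned equivalence collapses; no choice of $G$ or balancing relation repairs this, because in any perfect matching of six collinear points the unique long edge must cross the central gap, and crossing with length strictly greater than $G$ loses to crossing with length exactly $G$. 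Separately, your worry that blocking inequalities must ``persist at every scale $s$'' is a misdiagnosis: one only needs to certify minimality at $s=1$; the hypothesis then hands you minimality of the scaled matching for free, and the conclusion \eqref{ophelia} follows by comparing that scaled matching against one single competitor. Seeing this is what makes the lemma short.

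The paper's resolution is parity, not cancelling long edges: take \emph{five} points with successive gaps $x,w,y,z$, assuming without loss of generality $x\ge y$ and $z\ge w$ (the hypothesis together with \eqref{laertes} then forces $z\ge y$). Every minimal matching leaves exactly one point unmatched, so the all-short candidates are the three nearest-neighbour matchings, of costs $f(x)+f(y)$, $f(x)+f(z)$, $f(w)+f(z)$; the hypothesis makes the first cheapest of these, and \eqref{hamlet} eliminates every matching containing a non-neighbour edge, since such a matching costs at least $f(1)+f(2)>2f(1+\delta)$. Hence the matching of cost $f(x)+f(y)$ is minimal; by scale invariance its scaling is minimal for the scaled configuration, and comparing it with the scaled competitor of cost $f(sw)+f(sz)$ (both leave exactly one point unmatched) is exactly \eqref{ophelia}. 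The unmatched point plays the cancelling role you wanted your two equal long edges to play---it is the same ``infinite edge'' in both competitors---but it costs nothing and creates no spoilers. (Incidentally, your structural claim that two all-short perfect matchings cannot coexist is not quite correct: gaps $1,\delta/2,1$ admit both the separate and the entwined all-short matchings. The real reason four points cannot suffice is that such pictures only encode comparisons that are automatic, separate always beating entwined.)
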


\begin{proof}
Assume
\begin{align}\label{polonius}
f(x)+f(y) \le f(z)+f(w) .
\end{align}
By symmetry, we may assume  $x\ge y$ and $z\ge w$.
Consider the set of 5 points $R=\{r_1,\dots,r_5\}$ with successive gaps, in order, $x,w,y,z$.
Any minimal matching must have exactly one unmatched point.
Consider first the matchings consisting only of nearest neighbours; there
are three such matchings, with costs $f(x)+f(y)$, $f(x)+f(z)$ and $f(w)+f(z)$.
If $z<y$, then $w\le z<y\le x$, which contradicts \eqref{polonius}.
Hence, $z\ge y$, which together with \eqref{polonius} shows that the
matching $M:=\{\e{r_1,r_2},\e{r_3,r_4}\}$
with cost $f(x)+f(y)$ is minimal among these three.

Furthermore, the cost of this matching is at most $2f(1+\delta)$,
while every matching including a non-neighbour pair costs at least
$f(2)+f(1)$; hence
\eqref{hamlet} shows that $M$ is a minimal matching.

By the scale-invariance assumption,
$sM$ is a minimal matching of $sR$.
In particular, $f(sx)+f(sy)\le f(sz)+f(sw)$.
\end{proof}

\begin{lemma}\label{L2}
  If $x,z\in[1,1+\delta]$ and $s>0$, then
\begin{align}
    \label{fortinbras}
\frac{f'(sx)}{f'(x)} = \frac{f'(sz)}{f'(z)}.
 \end{align}
\end{lemma}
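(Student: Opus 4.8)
The plan is to upgrade the one-sided comparison \eqref{ophelia} of \cref{L1} to an exact rigidity statement, and then to convert that rigidity into a functional equation whose only solutions are affine. First I would note that \cref{L1} may be applied with the roles of $(x,y)$ and $(z,w)$ interchanged, since all four points lie in $[1,1+\delta]$. Combining the resulting two implications shows that \emph{equality} is preserved under scaling: whenever $x,y,z,w\in[1,1+\delta]$ satisfy $f(x)+f(y)=f(z)+f(w)$, then $f(sx)+f(sy)=f(sz)+f(sw)$ for every $s>0$. Equivalently, the value $f(sx)+f(sy)$ depends on the pair $(x,y)$ only through the sum $f(x)+f(y)$.

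Next I would linearize this by changing variables. By \eqref{laertes}, $f$ is strictly increasing and continuously differentiable on $[1,1+\delta]$, so it admits a $C^1$ inverse $h=f^{-1}$ on $I:=[f(1),f(1+\delta)]$, with $h'(u)=1/f'(h(u))$. Setting $F_s(u):=f\bigl(s\,h(u)\bigr)$, the rigidity just established reads
$$F_s(u)+F_s(v)=F_s(u')+F_s(v')\qquad\text{whenever } u+v=u'+v',\ u,v,u',v'\in I.$$
Choosing $u=u_0+t$, $v=u_0-t$, and $u'=v'=u_0$ yields the Jensen identity $F_s(u_0+t)+F_s(u_0-t)=2F_s(u_0)$ for $u_0$ in the interior of $I$ and $t$ small. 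Since $F_s$ is $C^1$, differentiating in $t$ gives $F_s'(u_0+t)=F_s'(u_0-t)$ for all admissible $u_0,t$; taking any two interior points as $u_0\pm t$ forces $F_s'$ to be constant on the interior of $I$. Denote this value by $a(s)$.

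Finally I would read off the claim by computing $F_s'$ directly. By the chain rule,
$$F_s'(u)=f'\bigl(s\,h(u)\bigr)\,s\,h'(u)=\frac{s\,f'(sx)}{f'(x)},\qquad x:=h(u),$$
so $s\,f'(sx)/f'(x)=a(s)$ is independent of $x$ for $x$ in the interior of $[1,1+\delta]$, and by continuity of $f'$ for all $x\in[1,1+\delta]$. Dividing by $s$ gives \eqref{fortinbras}. I expect the only delicate point to be the bookkeeping of domains — ensuring that $u,v$ and their translates remain in $I$ so that the Jensen identity and the differentiation are legitimate — together with the (standard) fact that a $C^1$ solution of Jensen's equation is affine; the scale-invariance hypothesis itself enters only once, through the single rigidity step derived from \cref{L1}.
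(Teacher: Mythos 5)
Your proof is correct and takes essentially the same route as the paper: both apply \cref{L1} in the two directions to see that scaling preserves equalities $f(x)+f(y)=f(z)+f(w)$, pass to the inverse of $f$ on $[1,1+\delta]$ (your $h$, the paper's $g$), and differentiate to extract \eqref{fortinbras}. The only difference is cosmetic — the paper perturbs a single such equality by a parameter $\eps$ and takes the derivative at $\eps=0$, whereas you phrase the rigidity as Jensen's equation for $F_s(u)=f\bigl(s\,h(u)\bigr)$ and conclude that $F_s'$ is constant; this is the same computation in slightly different packaging.
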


\begin{proof}
The  assumption \eqref{laertes} implies that $f:[1,1+\delta]\to[f(1),f(1+\delta)]$
has a differentiable inverse $g:[f(1),f(1+\delta)]\to[1,1+\delta]$,
with
\begin{align}\label{gertrude}
  g'(f(y))=\frac{1}{f'(y)},
\qquad y\in[1,1+\delta].
\end{align}

By continuity, it suffices to consider $x,y\in[1,1+\delta)$.
Let $\eps\ge0$ be so small that $f(x)+\eps, f(z)+\eps<f(1+\delta)$.
Define
$$
  w:=g\bigpar{f(x)+\eps},\qquad\qquad
y:=g\bigpar{f(z)+\eps}.
$$
Then
\begin{align}
  f(x)+f(y)=f(x)+f(z)+\eps=f(z)+f(w).
\end{align}
Hence, \cref{L1} (twice) yields $f(sx)+f(sy)=f(sz)+f(sw)$;
in other words,
\begin{align}\label{rosencrantz}
  f(sx)+f\bigpar{s g\bigpar{f(z)+\eps}}
=
  f(sz)+f\bigpar{s g\bigpar{f(x)+\eps}}.
\end{align}
Since \eqref{rosencrantz} holds for every small $\eps\ge0$, we may take the
(right) derivative at $\eps=0$ and obtain, by the chain rule,
recalling $g(f(z))=z$ and $g(f(x))=x$,
\begin{align}\label{guildenstern}
  f'(sz) s g'(f(z)) = f'(sx)sg'(f(x)),
\end{align}
which yields \eqref{fortinbras} by \eqref{gertrude}.
\end{proof}

\begin{lemma}\label{L3}
  $f'(x)\ne0$ for all $x>0$.
\end{lemma}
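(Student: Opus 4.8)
The plan is to extract from \cref{L2} the fact that rescaling by the fixed factor $1+\delta$ multiplies $f'$ everywhere by a single strictly positive constant, and then to exploit that the interval $[1,1+\delta)$ tiles $(0,\infty)$ multiplicatively under powers of $1+\delta$. Since $f$ is non-decreasing we have $f'\ge0$, so it suffices to rule out zeros.

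First I would specialize the identity \eqref{fortinbras}. Taking $z=1$ and $x=1+\delta$ there, and using that $f'(1)>0$ and $f'(1+\delta)>0$ by \eqref{laertes}, I obtain
$$ f'\bigl(s(1+\delta)\bigr)=\kappa\, f'(s),\qquad s>0, $$
where $\kappa:=f'(1+\delta)/f'(1)>0$. Thus scaling the argument by $1+\delta$ simply multiplies $f'$ by the strictly positive constant $\kappa$, independently of $s$. Iterating this relation, and inverting it to cover negative exponents, gives
$$ f'\bigl(s(1+\delta)^{k}\bigr)=\kappa^{k}\, f'(s),\qquad s>0,\ k\in\Z, $$
with $\kappa^{k}>0$ for every $k$.

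Finally I would use a covering argument. Any $y>0$ can be written uniquely as $y=r\,(1+\delta)^{k}$ with $k\in\Z$ and $r\in[1,1+\delta)$ (take $k=\lfloor\log_{1+\delta}y\rfloor$). The displayed relation then yields $f'(y)=\kappa^{k}f'(r)$, and since $r\in[1,1+\delta]$ forces $f'(r)>0$ by \eqref{laertes} while $\kappa^{k}>0$, I conclude $f'(y)>0$; in particular $f'(y)\neq0$, as required.

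The argument has essentially no serious obstacle: its only real content is the observation that \cref{L2} makes the action of scaling on $f'$ multiplicative, with a factor depending on the scale alone. The single point to get right is that the chosen $\delta$ guarantees $f'(1+\delta)>0$ (this is exactly \eqref{laertes}), which is what makes $\kappa>0$ and lets the positivity of $f'$ on $[1,1+\delta)$ propagate to all of $(0,\infty)$, rather than merely showing that the zero set of $f'$ is scale-invariant.
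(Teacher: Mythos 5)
Your proof is correct, and it takes a genuinely different route from the paper's. The paper argues by contradiction via a minimal counterexample: assuming $f'$ has a zero in $(1,\infty)$, it takes $x_0$ to be the infimum of such zeros (so $f'(x_0)=0$ by continuity and $x_0>1+\delta$ by \eqref{laertes}), then applies \cref{L2} once with $x=1$, $z=1+\delta$, $s=x_0/(1+\delta)$ to manufacture a zero at $s$ with $1<s<x_0$, contradicting minimality; a symmetric argument handles zeros below $1$. You instead extract from \eqref{fortinbras} the exact functional relation $f'\bigl(s(1+\delta)\bigr)=\kappa\,f'(s)$ for all $s>0$, with $\kappa=f'(1+\delta)/f'(1)>0$, iterate it to $f'\bigl(s(1+\delta)^k\bigr)=\kappa^k f'(s)$ for all $k\in\Z$, and use the multiplicative tiling of $(0,\infty)$ by the sets $[1,1+\delta)(1+\delta)^k$ to propagate the positivity of $f'$ on $[1,1+\delta]$ (which is \eqref{laertes}) to every $y>0$. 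Your use of \cref{L2} is legitimate at every scale $s$, since the denominators in \eqref{fortinbras} are evaluated only at points of $[1,1+\delta]$, where $f'>0$. What your version buys: it is direct rather than by contradiction, it avoids the topological step (infimum of the zero set and continuity of $f'$ there), and it yields the stronger conclusion $f'>0$ everywhere, which is what the proof of \cref{scale} ultimately uses (though for non-decreasing $f$ this is equivalent to $f'\neq 0$). The paper's version is marginally lighter, invoking \cref{L2} only once with no iteration or covering bookkeeping.
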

\begin{proof}
  Suppose that $f'(x)=0$ for some $x>1$, and let $x_0$ be the infimum of all
  such $x$; by continuity, $f'(x_0)=0$ and thus $x_0>1+\delta$.
Let $x:=1$ and $z:=1+\delta$, and take $s:=x_0/z>1$. Then $f'(sz)=f'(x_0)=0$,
and thus \cref{L2} shows that $f'(s)=f'(sx)=0$. This is a contradiction,
because $1<s<sz=x_0$.

Similarly, $f'(x)=0$ for some $x<1$ also leads to a contradiction.
\end{proof}

\begin{lemma}
  \label{L4}
For any $x,y,t>0$,
\begin{align}
  \label{macbeth}
\frac{f'(tx)}{f'(x)}=\frac{f'(ty)}{f'(y)}.
\end{align}
\end{lemma}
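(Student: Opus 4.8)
The plan is to prove the apparently stronger multiplicative statement that the ratio $f'(tx)/f'(x)$ does not depend on $x$ at all, which is precisely the content of \eqref{macbeth}. First I would record that $f'>0$ everywhere: since $f$ is non-decreasing we have $f'\ge 0$, and \cref{L3} rules out $f'=0$. Writing $F(x):=f'(x)/f'(1)>0$ and specialising \cref{L2} to $z=1$, the identity \eqref{fortinbras} becomes the functional equation
\[
F(sx)=F(s)\,F(x),\qquad x\in[1,1+\delta],\ s>0.
\]
The essential feature here is the asymmetry: the base point $s$ ranges over all of $(0,\infty)$, while the second argument $x$ is confined to the short interval $[1,1+\delta]$.

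Next I would bootstrap this local relation to all of $(0,\infty)$ by a telescoping argument that exploits exactly this asymmetry. For $t\ge 1$, factor $t=x_1\cdots x_n$ with each $x_i\in[1,1+\delta]$ (possible once $n$ exceeds $\log t/\log(1+\delta)$, for instance $x_i=t^{1/n}$). Applying the displayed relation $n$ times, each time keeping the free base point and peeling off one factor $x_i$ from the strip, gives
\[
F(st)=F(sx_1\cdots x_{n-1})\,F(x_n)=\cdots=F(s)\,F(x_1)\cdots F(x_n),
\]
and the choice $s=1$ identifies $F(x_1)\cdots F(x_n)=F(t)$ (since $F(1)=1$); hence $F(st)=F(s)F(t)$ for all $s>0$ and $t\ge 1$. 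The case $0<t<1$ then follows by reciprocals: applying the $t\ge1$ identity to $1/t>1$ yields $F(t)F(1/t)=F(1)=1$, and writing $F(s)=F\bigl((st)(1/t)\bigr)=F(st)F(1/t)$ recovers $F(st)=F(s)F(t)$ here as well.

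Finally, with $F(tx)=F(t)F(x)$ now valid for all $t,x>0$, the ratio $f'(tx)/f'(x)=F(tx)/F(x)=F(t)$ depends only on $t$, which is exactly \eqref{macbeth}. I expect the only genuine obstacle to be the strip-to-global extension in the second step; it is resolved precisely because \cref{L2} permits an unrestricted base point, so one can keep every increment inside $[1,1+\delta]$ while moving freely along the multiplicative line. Note that no appeal to continuity of $f'$ is needed for \eqref{macbeth} itself --- continuity will enter only afterwards, when \eqref{macbeth} is integrated to pin down the power law.
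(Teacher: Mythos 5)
Your proof is correct, and it rests on the same two ingredients as the paper's --- \cref{L2} specialised with one argument pinned at $1$, plus an iteration over factors drawn from $[1,1+\delta]$ --- but the bootstrap is organised along a different variable, so the two routes are worth contrasting. The paper fixes $t$, applies \cref{L2} to the pair $(1,z)$ with $z=y/x\in[1,1+\delta]$ at the two scales $s=x$ and $s=tx$, and divides the resulting identities to get \eqref{macbeth} when $y/x\in[1,1+\delta]$; it then inducts on $y/x\le(1+\delta)^n$ and finishes by symmetry. You instead iterate in the scale variable: reading \eqref{fortinbras} with $z=1$ as the Cauchy equation $F(sx)=F(s)F(x)$ for $F=f'/f'(1)$, you factor $t$ into strip elements to obtain global multiplicativity $F(st)=F(s)F(t)$, after which \eqref{macbeth} holds for all $x,y$ simultaneously, with no induction over $y/x$ and no symmetry step. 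What your route buys is that the intermediate statement is precisely the multiplicativity that the paper extracts only afterwards, by setting $y=1$ in \eqref{macbeth} at the start of the proof of \cref{scale}; with your argument that specialisation becomes redundant. A further small dividend: your identity $F(t)F(1/t)=1$ forces $F\neq 0$ everywhere, so the opening appeal to \cref{L3} is not actually needed in your route (though it is available and harmless), whereas the paper's division step genuinely relies on it. Your closing remark about continuity is accurate but does not distinguish the two proofs: the paper's proof of \eqref{macbeth} is likewise continuity-free, with continuity of $f'$ entering only inside \cref{L2} and in the final passage from multiplicativity to a power law.
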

\begin{proof}
Assume first that $0<x\le y\le(1+\delta)x$. Then let $z:=y/x\in[1,1+\delta]$.
Apply \cref{L2} to $1$ and $z$, with $s:=x$ and $s:=tx$; this yields
\begin{align*}
\frac{f'(x)}{f'(1)} = \frac{f'(y)}{f'(z)};\qquad
\frac{f'(tx)}{f'(1)} = \frac{f'(ty)}{f'(z)}.
\end{align*}
Dividing, we obtain \eqref{macbeth}
in the case $1\le y/x\le 1+\delta$.

By induction on $n$, we see that \eqref{macbeth} holds when $1\le
y/x\le(1+\delta)^n$ for every $n\ge1$, and thus whenever $x\le y$.
By symmetry, \eqref{macbeth} holds for all $x,y>0$.
\end{proof}

\begin{proof}[Proof of \cref{scale}]
Taking $y=1$ in \eqref{macbeth} shows that
\begin{align}
  \frac{f'(tx)}{f'(1)}
= \frac{f'(t)}{f'(1)}\cdot   \frac{f'(x)}{f'(1)}.
\end{align}
In other words, $x\mapsto f'(x)/f'(1)\in (0,\infty)$
is multiplicative and continuous, and
thus
there exists a real number $\rho$ such that
\begin{align}
  \frac{f'(x)}{f'(1)}  = x^\rho,
\end{align}
i.e., with $c:=f'(1)>0$,
\begin{align}
  f'(x)=cx^\rho.
\end{align}
This yields the claimed expressions with $\gamma=\rho+1$.
\end{proof}

\section{Perfectness}
\label{perf}

In this section we prove \cref{perfect} which states that minimal matchings are perfect in dimension $1$.  The key step is \cref{shorter} below, which holds in all dimensions.

Fix $d$ and $\gamma$, and consider $1$-colour (respectively, $2$-colour) matchings of set(s) of points $R$ (and $B$).  We say that a point $x$ is \df{potentially unmatched} if there exists a $\gamma$-minimal matching of $R$ (and $B$) in which it is unmatched.

\begin{prop}\label{shorter}
  Fix $d\geq 1$ and $\gamma\in (-\infty,\infty]$, and let $R$ (and $B$) be (independent) Poisson process(es) of intensity $1$ on $\R^d$.  Consider $\gamma$-minimal $1$-colour matchings of $R$ (respectively, $2$-colour matchings of $R$ and $B$).  Almost surely, if $x$ and $y$ are any two potentially unmatched points (respectively, of opposite colours) there exists an infinite sequence of distinct points $x_0,x_1,x_2,\ldots\in R(\cup B)$ (respectively, of alternating colours), with $x_0=x$, such that
  $$|x_i-x_{i+1}|\leq |x-y|\quad \forall i\geq 0.$$
\end{prop}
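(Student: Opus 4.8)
The plan is to extract a local ``rematch'' inequality from minimality, use it to certify short edges near an unmatched point, and then run an alternating-path argument that cannot terminate. Throughout set $r:=|x-y|$ and treat the $2$-colour case with $x$ red and $y$ blue; the $1$-colour case is identical.

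\emph{Step 1 (the rematch inequality).} Suppose $x$ is unmatched in a $\gamma$-minimal matching $M$. I first record the consequence of minimality that for every blue point $q$, the point $q$ is matched and $|q-M(q)|\le|x-q|$. Indeed $\{x,q,M(q)\}$ is a finite compatible subset, and rematching $x$ to $q$ while leaving $M(q)$ unmatched produces another matching of these three points with the same number of unmatched points, so minimality forces $f_\gamma(|q-M(q)|)\le f_\gamma(|x-q|)$; since $f_\gamma$ is strictly increasing (and the analogous lexicographic comparison applies when $\gamma=\infty$) the claim follows. (Were $q$ unmatched, matching $x$ to $q$ would instead strictly reduce the number of unmatched points.) In particular $M$ has \emph{no unmatched blue point}, and every blue point within distance $r$ of $x$ is matched by an edge of length $\le r$. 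The symmetric statement holds for any matching in which $y$ is unmatched, with the colours and the roles of $x,y$ reversed.

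\emph{Step 2 (the alternating path).} Choose $\gamma$-minimal matchings $M_0$ with $x$ unmatched and $M_1$ with $y$ unmatched; these exist since $x,y$ are potentially unmatched. In the symmetric difference $D:=M_0\triangle M_1$ every point has degree at most two, so the component of $x$ is a path or a cycle, and since $x$ is unmatched in $M_0$ but matched in $M_1$ (by Step~1 applied to $M_1$, which has no unmatched red) it is a path with endpoint $x$. I claim this path is infinite. A finite path would terminate at a second degree-one vertex, i.e.\ a point unmatched in exactly one of $M_0,M_1$; but edges of both matchings are red--blue, so colours alternate along the path, and the two facts ``$M_0$ has no unmatched blue, $M_1$ has no unmatched red'' then exclude every candidate endpoint. (Equivalently, a finite component would give distinct finitely supported minimal matchings with equal cost and distinct matched sets, contradicting \cref{coincidence}.) This non-termination --- forced purely by minimality of \emph{both} matchings --- is the surprising alternating-path mechanism.

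\emph{Step 3 (edge lengths, and the main obstacle).} It remains to produce a chain whose consecutive gaps are all $\le r$. Writing $x=x_0,x_1,x_2,\dots$ for the path of Step~2, the first two edges are short: $\{x_0,x_1\}\in M_1$ has length $\le|y-x_0|=r$ and $\{x_1,x_2\}\in M_0$ has length $\le|x-x_1|\le r$, both by Step~1. The difficulty, which I expect to be the crux, is that Step~1 bounds an $M_0$-edge by the distance to $x$ and an $M_1$-edge by the distance to $y$, and these reference distances grow as one moves out along the path, so the bound $\le r$ is not preserved by the $D$-path itself. I would therefore not use the $D$-path directly but instead work in the graph $G_r$ on $R\cup B$ joining opposite-colour points at distance $\le r$: since $G_r$ is locally finite, by K\"onig's lemma it suffices to show that the component of $x$ in $G_r$ (which contains $y$, as $|x-y|=r$) is infinite, and any ray in it is the desired chain. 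To prove infiniteness I would argue by contradiction from a finite component $C$, using Step~1 to force all edges of $M_0,M_1$ incident to points deep inside $C$ to be internal to $C$, and seeking a Hall/parity obstruction: $x$ being potentially unmatched and $y$ being potentially unmatched should impose incompatible imbalances on $C$. Controlling the long cross-boundary edges (length $>r$) so that this counting closes is the delicate heart of the argument, and it is here that the non-terminating alternating path of Step~2, routed to remain within $C$ and certified short by Step~1, is the tool I expect to drive the contradiction.
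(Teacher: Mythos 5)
Your Steps 1--2 correctly reproduce the paper's setup: take minimal matchings $M_0$ (with $x$ unmatched) and $M_1$ (with $y$ unmatched) and show that the component of $x$ in their superposition is an infinite path $x_0,x_1,x_2,\ldots$ with edges alternating between the two matchings. (One caveat: your colour-parity argument for non-termination works only in the $2$-colour case; in the $1$-colour case the path could a priori terminate at $y$, and only your parenthetical appeal to \cref{coincidence} --- which is the paper's actual argument, with \cref{noneqd} covering $\gamma=\infty$ --- rules this out, so that parenthesis is not merely an equivalent remark but essential.) The genuine gap is Step 3, which you yourself flag as ``the delicate heart.'' Your proposed replacement --- showing that the component of $x$ in the graph $G_r$ of opposite-colour pairs at distance $\leq r$ is infinite --- is, via K\"onig's lemma, just an equivalent restatement of the proposition, so nothing is gained; and the Hall/parity counting that is supposed to derive a contradiction from a finite component is never carried out. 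As you correctly diagnose, the rematch inequality of Step 1 bounds an $M_0$-edge at $q$ only by $|x-q|$ and an $M_1$-edge only by $|y-q|$, and these reference distances grow along any path, so Step 1 cannot certify short edges deep inside a putative finite component either; the plan as written does not close.

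The missing idea is to exploit minimality not through single swaps with the unmatched point, but through switches along arbitrarily long initial segments of the alternating path you already constructed. Set $c_0=f_\gamma(|x_0-y|)$ and $c_i=f_\gamma(|x_i-x_{i-1}|)$ for $i\geq 1$. The set $\{y,x_0,x_1,\ldots,x_{2k+1}\}$ is compatible with $M_1$, and switching $M_1$ along it (so that $x_{2k+1}$ becomes unmatched instead of $y$) preserves the number of unmatched points; minimality of $M_1$ therefore gives
\[
c_0+c_2+\cdots+c_{2k}\;\geq\; c_1+c_3+\cdots+c_{2k+1}.
\]
Likewise, switching $M_0$ along $x_0,\ldots,x_{2k}$ gives
\[
c_1+c_3+\cdots+c_{2k-1}\;\geq\; c_2+c_4+\cdots+c_{2k}.
\]
Adding the two inequalities and cancelling the common terms yields $c_0\geq c_{2k+1}$, and using the second inequality with $k+1$ in place of $k$ yields $c_0\geq c_{2k+2}$. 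Since $f_\gamma$ is nondecreasing, \emph{every} edge of the alternating path has length at most $|x-y|$; for $\gamma=\infty$ the same switches with maxima in place of sums give $\ell_0\geq\ell_j$ for all $j$ by induction. Thus the path of your Step 2 is itself the required sequence, and no auxiliary graph $G_r$ or counting argument is needed.
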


\begin{proof}
  Let $x$ and $y$ be potentially unmatched, and let $M$ and $N$ be $\gamma$-minimal matchings in which each of them is unmatched, respectively.  Since a $1$-colour minimal matching can have at most one unmatched point, and a $2$-colour minimal matching cannot have unmatched points of both colours, $x$ is matched in $N$ and $y$ is matched in $M$.
  Let $G$ be the multigraph with vertex set $R(\cup B)$ and whose edges are the edges of $M$ and $N$.  Each vertex has degree at most $2$, and $x$ and $y$ each have degree $1$.  Let $H$ be the component of $G$ containing $x$, which must be a finite or infinite path starting from $x$, with edges alternately in $N$ and $M$.  In the $2$-colour case, the colours of the points alternate also.

  Suppose that $H$ is finite.  The vertex set $V$ of $H$ is compatible with both $M$ and $N$, so the restrictions of $M$ and $N$ are $\gamma$-minimal matchings of $V$, with different matched sets.  This contradicts \cref{coincidence} if $\gamma<\infty$, or \cref{noneqd} if $\gamma=\infty$.

  So $H$ is infinite.  Let $x=x_0,x_1,x_2,\ldots$ be its vertices in order along the path. First suppose $\gamma<\infty$.  Let $c_0=f_\gamma(|x_0-y|)$ and $c_i=f_\gamma(|x_{i}-x_{i-1}|)$ for $i\geq 1$.  For $k\geq 0$,
  consider modifying $N$ by switching matched and non-matched edges along the alternating sequence $y,x_0,x_1,\ldots,x_{2k+1}$, so that $x_{2k+1}$ becomes unmatched instead of $y$.  Since $N$ is $\gamma$-minimal, this modification cannot decrease cost, so we conclude
  \begin{align*}
  c_0+c_2+\cdots+c_{2k}&\geq c_1+c_3+\cdots+c_{2k+1}.\\
  \intertext{Similarly, switching $M$ along $x_0,x_1,\ldots,x_{2k}$ gives}
  c_1+c_3+\cdots+c_{2k-1}&\geq c_2+c_4+\cdots+c_{2k}.
  \end{align*}
  Adding the two inequalities and cancelling the repeated terms gives $c_0\geq c_{2k+1}$.  Doing the same but using the $k+1$ case of the second inequality gives $c_0\geq c_{2k+2}$.  Since $f_\gamma$ is nondecreasing we obtain the claimed bound for both odd and even indices.

  The argument for $\gamma=\infty$ is similar.  Let $\ell_0=|x_0-y|$ and $\ell_i=|x_{i}-x_{i-1}|$ for $i\geq 1$.
  The same modifications as above yield
  \begin{align*}
  \ell_0\vee\ell_2\vee\cdots\vee\ell_{2k}&\geq \ell_1\vee\ell_3\vee\cdots\vee\ell_{2k+1},\\
  \ell_1\vee\ell_3\vee\cdots\vee\ell_{2k-1}&\geq \ell_2\vee\ell_4\vee\cdots\vee\ell_{2k},
  \end{align*}
  for $k\geq 0$.  Therefore, for all $j\geq 0$,
  $$\ell_0\vee\cdots\vee\ell_j\geq \ell_1\vee\cdots\vee\ell_{j+1},$$
  and we conclude by induction on $j$ that $\ell_0\geq \ell_j$ for all $j$.
\end{proof}

\begin{proof}[Proof of \cref{perfect}]
  In the case $\gamma=-\infty$ (stable matching; \cref{stable}), a.s.\ perfectness is known to hold in all dimensions; see e.g.\ \cite[Proposition~9]{hpps}.

  Let $\gamma\in(-\infty,\infty]$.  Suppose for a contradiction that there
  exist non-perfect $\gamma$-minimal matchings with positive probability.
  The set of all potentially unmatched points forms an ergodic invariant
  point process.  Since it is not a.s.\ empty it has infinitely many points
  a.s.  In $2$-colour case, the same argument applies to each colour, so
  by colour symmetry a.s.\ there are potentially unmatched points of both colours.  Therefore, by \cref{shorter}, there exists an infinite sequence of points $x_0,x_1,\ldots$ of $R(\cup B)$ with the distances $|x_i-x_{i+1}|$ bounded above (by a random but a.s.\ finite quantity).  However, this is impossible for a Poisson process in $\R$.  (Indeed, a.s.\ for every positive integer $n$, every point lies between two intervals of length at least $n$ that contain no points).

  Finally, the cases $\gamma=1+,1-$ follow trivially from $\gamma=1$.
\end{proof}


\section{The supercritical case}
\label{super}

In this section we analyse the $\gamma$-minimal matchings for $\gamma>1$ (together with some of $\gamma=1,1\pm$, depending on the number of colours) in $d=1$.  These cases are relatively straightforward, and permit simple explicit descriptions of the matchings.  We start with the $1$-colour case.

\begin{proof}[Proof of \cref{1colclass}(i)]
  Let $\gamma\in[1,\infty]\cup \{1+,1-\}$ and consider a $1$-colour $\gamma$-minimal matching of any set $R\subset \R$.
  By \cref{ineq} and \eqref{equality}, \eqref{trivial} (see also \cref{possibilities}), any two edges of such a matching must be separate.

  First suppose that $R$ has finite even cardinality, say $R=\{x_1,\ldots,x_{2n}\}$ where $x_1<\cdots<x_{2n}$.  By \cref{fin-exist} there is a minimal matching, and it must be perfect.  The only perfect matching with pairwise separate edges is
  $$m=\bigl\{\e{x_1,x_2},\e{x_3,x_4},\ldots,\e{x_{2n-1},x_{2n}}\bigr\},$$
  therefore this is the unique minimal matching.

  Now let $R$ be a Poisson process. By \cref{perfect} a.s.\ every
  minimal matching is perfect.  Let $\cdots<x_{-1}<x_0<x_1<\cdots$ be the
  points of $R$ in order, indexed so that $x_{-1}<0<x_0$ say.
There are exactly two perfect matchings with pairwise separate edges:
  \begin{align*}
    M_+:&=\{\ldots,\e{x_{-1},x_0},\e{x_1,x_2},\e{x_3,x_4},\ldots\},\\
    M_-:&=\{\ldots,\e{x_{-2},x_{-1}},\e{x_0,x_1},\e{x_2,x_3},\ldots\}.
  \end{align*}
  We call these two the \df{alternating} matchings. (They may be defined
  for any discrete set $\{x_i:i\in\Z\}\subset\R$ unbounded in both directions).
  These are therefore the only possible candidates for minimal matchings.
  Both are indeed minimal, because any restriction to a finite compatible set gives a finite matching of the form of $m$ above.

  We turn to matching schemes.  Any minimal matching scheme $M$ must concentrate on the alternating matchings $\{M_+,M_-\}$.  Such a scheme is characterised by the conditional probability
  $$\phi=\phi(R)=\P(M=M_+\mid R),$$
  and indeed $\phi$ can be chosen to be any measurable map from point configurations to $[0,1]$.
  It is easy to check that taking $\phi\equiv \tfrac12$ gives an invariant matching scheme $M$.  This scheme amounts to flipping a fair coin independently of $R$, and choosing $M_+$ or $M_-$ according to the outcome.  Equivalently, if we write $X_t=\ind[t \text{ is crossed by an edge}]$ then $(X_t)_{t\in\R}$ is the stationary continuous-time Markov chain with state space $\{0,1\}$ and transition rate $1$ between the states in each direction.

  The scheme just defined is not a factor, and no other choice of the function $\phi$ gives an invariant scheme.  These unsurprising but slightly delicate facts are proved in \cite[Lemma~11 and the following Remark]{hpps}.  Briefly, the argument is as follows.  First, no alternating factor matching exists.  This is proved using local approximations of events and mixing properties of the Poisson process.  Second, from any invariant scheme other than the one with $\phi\equiv \tfrac12$ one could construct an alternating factor matching, a contradiction.  Finally, the mixing argument from \cite{hpps} extends immediately to show that no alternating matching can be expressed as a factor of i.i.d.\ labels.
\end{proof}

Now we turn to the $2$-colour case.

\begin{proof}[Proof of \cref{2colclass}(i) and \cref{2colcomp}(i)]
   Let $\gamma\in(1,\infty]\cup \{1+\}$ and consider any $2$-colour $\gamma$-minimal matching of sets $R,B\subset \R$.
  By \cref{ineq} and \eqref{equality}, \eqref{trivial}, (see also \cref{possibilities}), two edges cannot straddle, and they can only be entwined if they have the same orientation.  Hence, for any two red points $r<r'$ and two blue points $b<b'$, the matching cannot contain both $\e{r,b'}$ and $\e{r',b}$: the matching respects order.

  Suppose that $R$ and $B$ are of equal finite cardinality, say $R=\{r_1,\ldots,r_n\}$ and $B=\{b_1,\ldots,b_n\}$ where $r_1<\cdots<r_n$ and $b_1<\cdots<b_n$.  Minimal matchings exist by \cref{fin-exist}, and must be perfect. Hence, by the above remark, the unique minimal matching is
  $$m=\{\e{r_1,b_1},\ldots,\e{r_n,b_n}\}.$$

  Now let $R$ and $B$ be independent Poisson processes of intensity $1$.  By \cref{perfect}, a.s.\ every minimal matching is perfect.  Let the red points be $\cdots<r_{-1}<r_0<r_1<\cdots$ and the blue points $\cdots<b_{-1}<b_0<b_1<\cdots$, where $r_{-1}<0<r_0$ and $b_{-1}<0<b_0$.  By the remark in the first paragraph, the matching must respect the orderings.  That is, any minimal matching is of the form
  $$M^k:=\bigl\{\e{r_{i+k},b_{i}}:i\in\Z\bigr\}$$
  for some $k\in \Z$.  Moreover, each $M^k$ is indeed minimal, because any restriction to a finite compatible set gives a finite matching of the form of $m$ above.

  The number of edges of $M^k$ that cross $0$ is exactly $|k|$.  Hence the number of edges that cross a bounded interval containing $0$ is at most $|k|$ plus the number of points in the interval, which is finite.  Hence $M^k$ is locally finite.

  Any invariant minimal matching scheme would therefore be locally finite, and this contradicts \cref{locinf}, so there is no such scheme.
\end{proof}

\section{Levels and critical cases}
\label{lev}

Next we address $\gamma\in\{1-,1\}$ in the $2$-colour case.  We start by introducing a simple tool that will be important for $\gamma<1$ as well.

Given disjoint discrete
sets $R,B\subset\R$ we define the associated \df{walk} $W=W_{R,B}:\R\to\Z$ by
\begin{equation}
\begin{aligned}
  W(0-)&=0;\\
  W(y)-W(x)&=\#\bigl(R\cap(x,y]\bigr)-\#\bigl(B\cap(x,y]\bigr),\quad x<y.
\end{aligned}\label{walk}
\end{equation}
The walk takes a step up at a red point and down at a blue point.  The choice of starting point $W(0-)=0$ will be convenient for later technical steps involving the Palm process.  For the current discussions we can (optionally) assume that there is no point at $0$ so that $W(0)=0$. If $R$
and $B$ are independent Poisson processes of intensity $1$ then $W$ is a
continuous-time simple symmetric random walk.
More precisely, the jump times of $W$ form a Poisson process of
  intensity 2, and $W$ considered at these times is a symmetric simple
  random walk; thus $W$ is a particular case of a compound Poisson process.

For $k\in\Z$, define \df{level} $k$ to be the set $\Lambda_k=\Lambda_k(R,B)$ of points where the walk moves between $k$ and $k+1$:
\begin{equation}\label{level}
\Lambda_k:=\Bigl\{x\in \R:\{W(x-),W(x+)\}=\{k,k+1\}\Bigr\}.
\end{equation}
Note that the levels $(\Lambda_k)_{k\in \Z}$ form a partition of $R\cup B$, and that the elements of a level alternate in colour.  

\begin{remark}
  Walks and levels will be most useful for $\gamma\leq 1$, but they also provide a convenient description of the matchings $(M^k)_{k\in\Z^d}$ introduced in \cref{super} in the context of $\gamma> 1$.  In the matching $M^k$,
  some elements of $\R$ are not crossed by any edge: these form precisely the interior of the set $\{x:W(x)=k\}$.  The closure of $\{x:W(x)>k\}$ is a disjoint union of bounded intervals (corresponding to excursions of $W$ above $k$) containing the points of $\bigcup_{j\geq k} \Lambda_j$.  Each such interval contains equal numbers of red and blue points.  Within each such interval, the $i$th red point in the interval is matched to the $i$th blue point, via a right oriented edge.  Similarly, each interval of the closure of $\{x:W(x)<k\}$ contains points in $\bigcup_{j<k} \Lambda_j$, with the $i$th red point to the $i$th blue point via a left oriented edge.
\end{remark}

Now we focus on matchings with no entwined edges, as holds for $\gamma$-minimal matchings with $\gamma<1$ and $\gamma=1-$.

\begin{lemma}[Levels]\label{levels}
  Let $R,B\subset\R$ be disjoint discrete
sets, and define levels as above.  Let $M$ be any perfect $2$-colour matching of $R$ and $B$ in which no two edges are entwined.  Any point and its partner belong to the same level.
\end{lemma}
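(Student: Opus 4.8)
The plan is to fix a single matched pair $\e{p,q}\in M$ with $p<q$ and prove that the open interval $(p,q)$ contains equally many red and blue points; the statement about levels then drops out of a two-line computation with the walk $W$. The whole argument is local to this one edge, so there is no need to consider the global structure of $M$.

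First I would use the non-entwining hypothesis to confine the matching inside $(p,q)$. Take any $z\in(p,q)$. Since $M$ is perfect, $z$ has a partner $w=M(z)$, and $w\neq p,q$ because $p$ and $q$ are matched to each other. If $w$ lay outside $[p,q]$, then exactly one endpoint of $\e{z,w}$ (namely $z$) would lie strictly between the endpoints of $\e{p,q}$, so the two edges would be entwined, contrary to hypothesis. Hence $w\in(p,q)$, and therefore \emph{every} point of $(p,q)$ is matched to another point of $(p,q)$. As every edge of a $2$-colour matching joins a red point to a blue point, this forces $\#\bigl(R\cap(p,q)\bigr)=\#\bigl(B\cap(p,q)\bigr)$.

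Next I would feed this balance into the walk. By the increment formula \eqref{walk}, the net change of $W$ across the open interval is
$$W(q-)-W(p+)=\#\bigl(R\cap(p,q)\bigr)-\#\bigl(B\cap(p,q)\bigr)=0,$$
so $W(q-)=W(p+)$. Put $k:=W(p-)\wedge W(p+)$. Since $p\in R\cup B$, the walk takes a unit step at $p$, so $\{W(p-),W(p+)\}=\{k,k+1\}$ and thus $p\in\Lambda_k$ by \eqref{level}. Because $p$ and $q$ are matched they have opposite colours, so $W$ steps in the opposite direction at $q$; combining this with $W(q-)=W(p+)$, a short case check (right-oriented: $W(p+)=k+1$ gives $W(q+)=k$; left-oriented: $W(p+)=k$ gives $W(q+)=k+1$) shows $\{W(q-),W(q+)\}=\{k,k+1\}$ in either orientation, whence $q\in\Lambda_k$. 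Thus $p$ and $q$ lie on the same level.

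The argument is genuinely short, and its only real content is the first step, where non-crossing is converted into the red/blue count balance inside $(p,q)$. I expect the main (and rather minor) obstacle to be keeping the interval bookkeeping airtight—tracking half-open versus open intervals and the step directions at $p$ and at $q$—so that both edge orientations are covered correctly and the index $k$ comes out the same for both endpoints.
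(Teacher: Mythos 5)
Your proof is correct and is essentially the paper's argument run in the contrapositive direction: the paper assumes partners in different levels, deduces unequal red/blue counts in the intervening interval via the walk, and derives an entwined pair, whereas you use non-entwining to confine the matching inside $(p,q)$, deduce equal counts, and read off equality of levels from the walk. The ingredients (perfectness plus non-entwining to trap interior points, and the walk-increment/level correspondence, including your orientation case check) are the same in both.
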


\begin{proof}
  Suppose on the contrary that $r$ and $b$ are partners that belong to different levels, where $r<b$ say.  Then $W(b-)-W(r+)\neq 0$, so the numbers of red and blue points in the open interval $I=(r,b)$ are unequal.  So, since the matching is perfect, some point $x$ in $I$ must have its partner outside $I$.  But then  $\e{r,b}$ and $\e{x,M(x)}$ are entwined.
\end{proof}

Now we address $(1-)$-minimal $2$-colour matchings.  We can explicitly characterize all the matchings.

\begin{proof}[Proof of \cref{2colclass}(iii)]
\sloppypar   First consider any $(1-)$-minimal $2$-colour matching of any disjoint discrete sets $R,B\subset \R$.  By \eqref{equality}, \eqref{trivial} and the definition of $(1-)$-minimality (see also \cref{possibilities}), no two edges are entwined, and no two edges of opposite orientations straddle.

Consider any matching with the properties:
\begin{equation}\label{properties}
  \text{perfect, no entwined edges, no straddling edges of opposite orientations.}
\end{equation}
By \cref{levels}, each point is in the same level as its partner.
We claim that within a given level, no two edges can straddle.  Indeed, suppose $\e{r,b}$ is a right-oriented edge (without loss of generality) that straddles another edge in the same level.  Let $b'$ be the first point in the level to the right of $r$.  Then $b'$ is blue and $b'\neq b$, but its partner $r'$ must be to its right to avoid entwining, so the two straddling edges $\e{r,b}$ and $\e{r',b'}$ have opposite orientations, a contradiction which establishes the claim.

   Now let $R$ and $B$ be of equal finite cardinality.
     We claim that there is a unique $(1-)$-minimal matching, and that it is the unique matching with the properties \eqref{properties}.
   To establish this, note that every minimal matching has these properties, and there is at least one minimal matching by \cref{fin-exist}.  On the other hand, let $M$ be any perfect matching with the properties.  By the claim above, there are no straddling edges within a level.  Therefore, there is only one possible matching of each level: $\{\e{x_1,x_2},\ldots,\e{x_{2n-1},x_{2n}}\}$ where $x_1<\cdots<x_{2n}$ are the points of the level.  Hence there is at most one matching with the given properties, completing the proof of the claims.

   Now let $R,B$ be independent Poisson processes of intensity $1$.  Let $M$ be any
   $(1-)$-minimal matching (if one exists).  By \cref{perfect}, a.s.\ $M$ is perfect, so by the previous discussions, the matching is confined to levels and has no entwined edges or straddling edges within levels.
   By the recurrence of the random walk $W$, each level $\Lambda_k$ is unbounded in both directions.  Therefore, restricted to a given level $\Lambda_k$, there are two possible matchings -- the alternating matchings defined in the proof of \cref{1colclass}(i).  One has all edges oriented left, and the other all right.  We denote them $m_k^-$ and $m_k^+$ respectively.

   We now address the relationship between levels.  For each $k\in\Z$, a.s.\ there exist two red points $r<r'$ with no other points between them and with $r\in\Lambda_k$, and hence $r'\in\Lambda_{k+1}$.  Suppose that the matching at level $k$ is $m_k^+$, so that $r$ is matched to the right.  Then $r'$ must also be matched to the right, otherwise the edges would be entwined.  Therefore the matching at level $k+1$ must be $m_{k+1}^+$.
   Consequently, there must exist some $k\in\Z\cup \{-\infty,\infty\}$ such that the matching $M$ takes the form
   \begin{equation}\label{defmk}
   M_k:=\Bigl(\bigcup_{j< k-1/2} m_j^- \Bigr)
   \cup \Bigl(\bigcup_{j> k-1/2} m_j^+\Bigr).
   \end{equation}
   (Here the first union in empty if $k=-\infty$, and the last is empty if $k=\infty$.)

   We need to check that each of the matchings $M_k$ defined above is indeed
   minimal.  We claim that they each have the properties in
   \eqref{properties}.  Once this is established, the same obviously holds
   for any finite subset of the edges, and therefore by the earlier
   discussion, every restriction to a finite compatible set is minimal, so
   the matching $M_k$ is minimal.  To prove the claim, consider two edges
   $\e{r,b}$ and $\e{r',b'}$ at respective levels $j$ and $j'$.  If $k-\tfrac12$
   does not lie between $j,j'$ then without loss of generality suppose
   $k-\tfrac12<j\leq  j'$.   Then $r<b$, and $W>j$ throughout the interval
   $(r,b)$.  Suppose that one of $r',b'$, say $r'$ without loss of
   generality, lies between $r$ and $b$.  Since $W$ makes a step from $j'$
   to $j'+1$ at $r'$, it must step back down to $j'$ between $r'$ and $b$.
   So $\e{r,b}$ straddles $\e{r',b'}$ and both are oriented right.  Now
   suppose that $k-\tfrac12$ lies between $j$ and $j'$, say
   $j<k-\tfrac12<j'$.  Then $W<k$ on the interval $(b,r)$ and $W>k$ on the
   interval $(r',b')$, so the two edges are separate.
Thus $M_k$ is minimal. Consequently, a.s.\
the minimal matchings are precisely $(M_k:k\in\Z\cup \{-\infty,\infty\})$.

   Next we address local finiteness.  The points
of a level alternate in colour, and in $M_k$, the intervals between them alternate between being crossed by no edge and one edge of that level.  If $j>k-\frac12$ then $x\in\R\setminus(R\cup B)$ is crossed by an edge of level $j$ if and only if $W(x)>j$; for $j<k-\frac12$ the condition becomes $W(x)\leq j$.  Therefore the total number of edges of $M_k$ that cross $x\in\R\setminus(R\cup B)$ is $|W(x)-k|$.  In particular, $M_{-\infty}$ and $M_\infty$ are locally infinite, while $(M_k:k\in\Z)$ are all locally finite.

   Now suppose that $M$ is an invariant minimal matching scheme.  By \cref{locinf}, $M$ is locally infinite a.s., so it must concentrate on $\{M_{-\infty},M_\infty\}$.  On the other hand, $M_{-\infty}$ can be described as follows: the partner of a red point $r$ is
   \begin{equation}\label{meshalkin}
   M_{-\infty}(r)=\min\Bigl\{b>r: \#\bigl(R\cap[r,b]\bigr)=\#\bigl(B\cap[r,b]\bigr)\Bigr\}.\end{equation}
   And $M_\infty$ has a similar characterization with the colours reversed.  Therefore both $M_{-\infty}$ and $M_\infty$ are invariant, and indeed they are factors, and hence ergodic.  For a general invariant minimal matching scheme $M$, the event $A=\{M=M_{-\infty}\}$ is a.s.\ equivalent to the event that every edge is oriented right, which is translation-invariant.  Therefore if $M$ is ergodic then $A$ has probability $0$ or $1$.  Hence there are no further ergodic matching schemes besides $M_{-\infty}$ and $M_\infty$, and in particular no other factors.  Finally, by ergodic decomposition \cite[Theorem~10.26]{kallenberg}, every invariant minimal matching scheme $M$ is a mixture of $M_{-\infty}$ and $M_\infty$ (that is, $M=M_{\eta}$ where $\eta$ is independent of $(R,B)$ and takes value $\infty$ with probability $p$ and $-\infty$ otherwise, where $p\in[0,1]$ is arbitrary).
%
%
\end{proof}

\begin{proof}[Proof of \cref{2colclass}(ii)]
  We address $1$-minimal $2$-colour matchings of Poisson processes.  The
  $(1-)$-minimal matching $M_{-\infty}$ from the proof of part (iii) above
  is by definition $1$-minimal.  We will modify it.  Let $r<r'<b'<b$ be $4$
  consecutive points, with $r,r'\in R$ and $b,b'\in B$ and no other points
  between $r$ and $b$.  A.s.\ there are infinitely many such $4$-tuples, and
  no two of them overlap with each other.  The matching $M_{-\infty}$
  matches the pairs $\e{r,b},\e{r',b'}$.  Matching $\e{r,b'},\e{r',b}$
  instead gives another $1$-minimal matching.
(It suffices to consider finite compatible subsets containing
  $r,r',b,b'$, and then the two induced matchings have the same cost.)
Therefore, we get uncountably many minimal matchings by performing this modification at an arbitrary set of $4$-tuples.  Moreover, if $E$ is a measurable subset of $[0,\infty)$ then we can perform the modification at those tuples for which $|r'-b'|\in E$.  Each choice of $E$ gives a minimal factor matching scheme, and varying $E$ (for example over sets of the form $E_I:=\bigcup_{i\in I} [i,i+1)$ for $I\subseteq\N$) gives uncountably many such schemes.
\end{proof}

\section{Quasistability, and existence via limits}
\label{lim}

In this section we present two different limiting arguments that establish existence of minimal matchings, and indeed of invariant matching schemes, in certain cases.  The first involves a limit of minimal matchings on large finite boxes.  The second, an extension of methods of \cite{h}, involves matchings whose average cost approaches the infimum.  In both cases the key step is to establish that no points are `matched to infinity' in the limit.  In the first case this relies on a property of the subcritical regime $\gamma<1$ which we call \emph{quasistability}.  This property will be important for later proofs also.  The second case relies on a uniform bound on the average cost.  Recall that invariant minimal matching schemes are perfect by \cref{inv-perfect}.

\begin{prop}[Existence via quasistability]\label{quasilimit}
  \sloppypar Let $\gamma\in (-\infty,1)$ and $d\geq 1$, and consider
  (independent) Poisson process(es) $R$ (and $B$) of intensity $1$ on $\R^d$.  There
  exists an invariant perfect
$\gamma$-minimal $1$-colour (\rs $2$-colour) matching
  scheme.
\end{prop}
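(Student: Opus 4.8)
The plan is to construct the invariant scheme as a distributional limit of minimal matchings on large finite boxes. For each $n$, restrict attention to the points of $R$ (and $B$) inside the box $[-n,n]^d$ and let $M_n$ be a $\gamma$-minimal matching of these finitely many points, which exists by \cref{fin-exist}. To obtain invariance, I would not work with $M_n$ directly but with a \emph{translation-averaged} version: run the construction in a box centred at a uniformly random shift (equivalently, apply the Palm/averaging device so that the resulting scheme inherits stationarity). Standard compactness for point processes on $(\R^d)^2$ (local weak limits, vague convergence of the associated random measures) gives a subsequential limit $M$ of the laws of $(R,B,M_n)$, and this limit is an invariant matching scheme. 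The remaining work is to show (a) that $M$ is genuinely a matching with no points ``lost to infinity,'' and (b) that $M$ is $\gamma$-minimal.

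\textbf{Minimality of the limit} should be the easier of the two. Minimality is a \emph{local} property: $M$ is $\gamma$-minimal iff for every finite compatible subset the restriction is cost-minimizing. Each $M_n$ is $\gamma$-minimal on its box, hence its restriction to any finite compatible set well inside the box is minimal; since minimality of a fixed finite configuration is determined by finitely many edge-length comparisons and the cost function $f_\gamma$ is continuous, this property is preserved under the weak limit (using \cref{coincidence} to rule out degenerate ties, so that the minimizing matching of a finite set depends on the point positions in a way that is stable under convergence). The one subtlety is boundary effects near $\partial[-n,n]^d$, but since any fixed finite window is eventually deep inside the box, these do not affect the limiting restriction.

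\textbf{The main obstacle} is ruling out points matched to infinity --- this is exactly what the proposition's emphasis on ``quasistability'' signals, and where the hypothesis $\gamma<1$ is essential. In the limit, a point $x$ could in principle be left unmatched because its $M_n$-partner escaped to the boundary as $n\to\infty$; I must exclude this. The plan is to prove that $\gamma<1$ forces a strong local constraint on minimal matchings --- \emph{quasistability} --- asserting roughly that if a point is matched at distance $t$, then the ball of radius comparable to $t$ around it cannot contain ``too many'' points whose own partners are much closer, because \cref{ineq} (with $\gamma<1$, where entwining is strictly suboptimal and long edges are comparatively cheap to swap) would otherwise permit a cost-reducing rearrangement. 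Quantitatively, quasistability should give a tail bound: the probability that the origin's partner in $M_n$ lies at distance $>t$ is bounded, uniformly in $n$, by a quantity tending to $0$ as $t\to\infty$. By invariance (the averaged construction) this bound transfers to the limit, showing $\P(X<\infty)=1$, i.e.\ $M$ is a.s.\ perfect; then \cref{inv-perfect} is consistent and the scheme is the desired one. The delicate part is establishing the uniform-in-$n$ tail estimate from the swapping inequalities of \cref{ineq}, since one must control a potentially long alternating chain of nearby points --- precisely the kind of argument foreshadowed by the statement that quasistability ``will be important for later proofs.''
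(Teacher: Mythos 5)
Your overall framework is the same as the paper's: finite-volume $\gamma$-minimal matchings (the paper tiles all of $\R^d$ into boxes with a uniformly random shift, which is what makes the approximating schemes genuinely invariant), compactness of the laws in the vague topology, a subsequential limit, and then two tasks -- perfectness and minimality of the limit. The minimality step also matches the paper in spirit (the paper implements it via Skorohod coupling and a small-balls continuity argument). The divergence, and the genuine gap, is in how you propose to rule out points lost to infinity.

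You plan to prove a tail bound on the matching distance in $M_n$, \emph{uniform in $n$}, from quasistability. This works in the $1$-colour case: there, quasistability (\cref{quasi}) implies that a ball $S_{t/2}$ contains at most \emph{one} point matched at distance $>\kappa t$, which gives a density bound exactly as in \cref{dmom}. But in the $2$-colour case quasistability only constrains \emph{pairs of opposite colours}: it says that a red point and a blue point cannot both be matched much farther than their mutual distance. It is perfectly consistent with quasistability that \emph{every} red point in a ball is matched very far away (provided all blue points there are matched nearby), so no bound on the number of badly matched points of a single colour -- hence no tail bound on the matching distance -- follows from it. Indeed the paper proves no such uniform quantitative estimate; the $2$-colour tail bound of \cref{2coltail}(i) is specific to $d=1$, requires the random-walk machinery, and is proved only \emph{after} existence. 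The paper's actual argument is softer: quasistability shows that if $S_{(2\kappa+1)t}$ lies inside one tile, then $S_t$ cannot contain points of \emph{both} colours lacking partners in $S_{(2\kappa+1)t}$; passing to the limit, the invariant scheme $\mat$ a.s.\ has unmatched points of at most one colour, and then \cref{unm}(ii) (a mass-transport/ergodic-decomposition fact: unmatched reds and blues have equal intensity under an invariant scheme) forces perfectness with no quantitative control at all. You need this (or some substitute for the missing $2$-colour tail bound) to close the argument.

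A secondary structural point: your claim that minimality is ``preserved under the weak limit'' because it is local implicitly presupposes perfectness, so the two steps are not independent and must be done in the right order. A finite set compatible with the limit $M$ (e.g.\ two points unmatched in $M$) need not be compatible with any $M_n$, so minimality of restrictions of $M_n$ says nothing about such sets. The paper's remark after the proof makes this concrete: for $\gamma>1$, $2$ colours, $d=1$, the same limiting construction produces the \emph{empty} matching, which is a weak limit of minimal matchings but is itself not minimal. So one must first prove perfectness, and only then argue that a cheaper local rearrangement of finitely many \emph{edges} of $M$ yields a contradiction with minimality inside a single tile of $M_{n(k)}$.
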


\begin{prop}[Existence via finite average cost]\label{minlimit}
  Let $\gamma\in(0,\infty)$ and $d\geq 1$, and consider $1$-colour
  (respectively $2$-colour) matchings of (independent) Poisson process(es)
  $R$ (and $B$) of intensity $1$ on $\R^d$.  Suppose that there exists some
  invariant perfect
 matching scheme whose matching distance $X$ satisfies $\E^*
  X^\gamma<\infty$.  Then there exists an invariant perfect
$\gamma$-minimal
  matching scheme.
\end{prop}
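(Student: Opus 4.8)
The plan is to realize a $\gamma$-minimal scheme as a \emph{minimizer of the average cost} among invariant perfect matching schemes. For such a scheme $M$ define its average cost by $c(M):=\E^* X^\gamma$, where $X$ is the matching distance; by the Palm--Campbell formula this equals, up to the constant intensity factor, the expected cost of the edges crossing a fixed unit region, so it is the natural ``cost per unit volume''. The hypothesis furnishes a scheme with $c(M)<\infty$, hence $c_*:=\inf_M c(M)\in[0,\infty)$, the infimum being over all invariant perfect $1$-colour (\rs $2$-colour) schemes. I would first show that $c_*$ is attained. Take schemes $M_n$ with $c(M_n)\to c_*$. Since the underlying process $R$ (\rs $(R,B)$) has fixed law and only the edge configuration varies, the uniform bound $c(M_n)\le c_*+1$ controls long edges: by Markov's inequality the Palm tail satisfies $\P^*(X_n>\ell)\le (c_*+1)\ell^{-\gamma}$ uniformly in $n$. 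This yields tightness of the laws of $(R,M_n)$ (\rs $(R,B,M_n)$), so along a subsequence they converge weakly to an invariant scheme $M_*$.

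Next I would check that $M_*$ is an admissible competitor. Invariance passes to the weak limit. The limit is again a matching, since the max-degree-one condition is closed and, because $R$ (and $B$) are locally finite discrete sets, two distinct limiting edges cannot share an endpoint (their approximants would eventually have to coincide). Perfectness --- no point ``matched to infinity'' --- follows from the uniform tail bound, which persists in the limit and forces $X_*<\infty$ a.s.; hence the typical point is matched, and by invariance a.s.\ every point is. Finally, as $x\mapsto x^\gamma$ is nonnegative and continuous and edge length is continuous, the cost functional is lower semicontinuous under weak convergence (escaping mass can only \emph{lower} the limiting cost, never raise it), giving $c(M_*)\le\liminf_n c(M_n)=c_*$; since $M_*$ is itself admissible, $c(M_*)=c_*$.

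It remains to show the minimizer $M_*$ is $\gamma$-minimal, which I expect to be the crux. Suppose not. Then with positive probability some finite compatible subset admits a strictly cheaper matching; taking the symmetric difference of the two restricted (perfect) matchings and isolating an alternating cycle of negative net cost, one finds $r<\infty$ and $\eta>0$ such that, with positive probability, there is an improving alternating cycle of diameter at most $r$ whose swap lowers cost by at least $\eta$. I would then modify $M_*$ invariantly: using i.i.d.\ labels on the points, greedily select a maximal family of pairwise vertex-disjoint such improving cycles (a factor of $(R,B,M_*)$ and the labels, hence invariant and measurable) and perform all swaps simultaneously. Vertex-disjointness keeps the result a perfect matching. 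Because each candidate cycle has diameter at most $r$ it conflicts with only boundedly many others, so the greedy family covers a positive density of the improving cycles; by invariance the expected cost removed per unit volume is therefore strictly positive, yielding an invariant perfect scheme $M'$ with $c(M')<c_*$. This contradicts the definition of $c_*$, so $M_*$ is $\gamma$-minimal.

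The main obstacle is this final step: performing the local improvements in a genuinely translation-invariant, measurable way without the swaps interfering, and quantifying that a maximal vertex-disjoint family removes a \emph{positive density}, hence a strictly positive expected amount, of cost (a standard greedy/mass-transport estimate, using that the conflict graph among diameter-$\le r$ cycles has bounded expected degree). The tightness and lower-semicontinuity steps are more routine, but they rely essentially on the nonnegativity of $f_\gamma$ for $\gamma>0$ --- so that escaping mass can only decrease the limiting cost --- and on the uniform tail bound supplying simultaneously the compactness and the perfectness of the limit.
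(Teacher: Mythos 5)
Your proposal is correct and takes essentially the same route as the paper: define the average cost $\E^* X^\gamma$, take a minimizing sequence of invariant perfect schemes, extract a subsequential distributional (vague) limit, use the uniform cost bound (with $f_\gamma\geq 0$ and $f_\gamma\to\infty$) to get perfectness and attainment of the infimum, and then conclude $\gamma$-minimality by arguing that a non-minimal minimizer could be locally modified in an invariant way to strictly decrease the average cost. The invariant local-improvement step that you sketch via bounded-diameter improving cycles and i.i.d.\ labels is precisely the content the paper delegates to the cited reference \cite{h}.
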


   Quasistability is analogous to stability \eqref{stab}, but with an extra multiplicative constant.  Recall that if $x$ is an unmatched point of $M$ then we write $M(x)=\infty$ and $|x-M(x)|=\infty$.

\begin{prop}[Quasistability]\label{quasi}
  For each $\gamma\in[-\infty,1)$ there exists
  $\kappa=\kappa(\gamma)\in[1,\infty)$ with the following property.  For any
  $d\geq 1$, if $M$ is a $\gamma$-minimal $1$-colour matching
  of a set $R\subset\R^d$
  and $x$ and $y$ are two distinct points
  in $R$, then
  \begin{equation}\label{kappa-ineq}
    |x-M(x)|\wedge|y-M(y)|\leq \kappa\,|x-y|.
  \end{equation}
  The same holds for a $\gamma$-minimal $2$-colour matching provided $x$ and $y$ have different colours.
\end{prop}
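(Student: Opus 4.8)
The plan is to argue by contradiction: if both $|x-M(x)|$ and $|y-M(y)|$ exceeded $\kappa\,|x-y|$, then swapping the two partners would produce a strictly cheaper finite matching, contradicting $\gamma$-minimality once $\kappa$ is large enough. The constant $\kappa=\kappa(\gamma)$ will emerge from a purely analytic inequality about $f_\gamma$, which is where concavity (equivalently $\gamma<1$) is essential. For $\gamma=-\infty$ there is nothing to do beyond \cref{stable}: a $(-\infty)$-minimal matching is stable, and stability \eqref{stab} is exactly \eqref{kappa-ineq} with $\kappa=1$. So I would take $\kappa\ge1$ throughout and focus on finite $\gamma<1$.

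First I would dispose of the degenerate configurations so that the main step concerns two genuinely long edges. If $x$ and $y$ are already partners, the left-hand side of \eqref{kappa-ineq} equals $|x-y|\le\kappa|x-y|$ and we are done. If exactly one of $x,y$ is unmatched, say $x$, then restricting $M$ to the compatible set $\{x,y,M(y)\}$ and comparing with the matching that instead pairs $x$ with $y$ (leaving $M(y)$ unmatched) --- a modification that does not change the number of unmatched points --- gives $f_\gamma(|y-M(y)|)\le f_\gamma(|x-y|)$, whence $|y-M(y)|\le|x-y|$ since $f_\gamma$ is strictly increasing. Both $x,y$ being unmatched is impossible: in the $1$-colour case a minimal matching has at most one unmatched point, and in the $2$-colour case $x,y$ have opposite colours, so matching them to each other would strictly reduce the number of unmatched points.

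Now suppose $x,y$ are both matched and not partners; write $p:=|x-M(x)|$, $q:=|y-M(y)|$, $s:=|x-y|$, $t:=|M(x)-M(y)|$, and note that $x,M(x),y,M(y)$ are four distinct points. In the $2$-colour case $x,M(y)$ share one colour and $y,M(x)$ the other, so $\{\e{x,y},\e{M(x),M(y)}\}$ is again a valid matching of these four points. Applying $\gamma$-minimality to the restriction of $M$ to $\{x,M(x),y,M(y)\}$ and comparing with this swapped matching yields
$$f_\gamma(p)+f_\gamma(q)\ \le\ f_\gamma(s)+f_\gamma(t).$$
The triangle inequality gives $t\le p+q+s$, and since $f_\gamma$ is nondecreasing we may replace $t$ by $p+q+s$; by scale invariance (scaling all points by $1/s$ preserves $\gamma$-minimality) we may take $s=1$. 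Thus it suffices to exhibit $\kappa=\kappa(\gamma)\ge1$ such that $\min(p,q)>\kappa$ forces the reverse strict inequality
$$f_\gamma(p)+f_\gamma(q)\ >\ f_\gamma(1)+f_\gamma(p+q+1).$$

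This last analytic claim is the crux, and I would prove it using concavity. For every $\gamma<1$ the function $f_\gamma$ is increasing and strictly concave, so $\Phi(p,q):=f_\gamma(p)+f_\gamma(q)-f_\gamma(1)-f_\gamma(p+q+1)$ has $\partial_p\Phi=f_\gamma'(p)-f_\gamma'(p+q+1)>0$, and likewise $\partial_q\Phi>0$. Hence on $\{p,q\ge\kappa\}$ the minimum of $\Phi$ is attained at $p=q=\kappa$, and it remains to check $2f_\gamma(\kappa)-f_\gamma(1)-f_\gamma(2\kappa+1)>0$ for all large $\kappa$. I would verify this in the three regimes: for $\gamma\in(0,1)$ the quantity is asymptotic to $(2-2^\gamma)\kappa^\gamma\to\infty$; for $\gamma=0$ it equals $\log\bigl(\kappa^2/(2\kappa+1)\bigr)\to\infty$; and for $\gamma<0$ it tends to $1>0$ as $\kappa\to\infty$. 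Choosing $\kappa$ beyond the resulting threshold (and at least $1$) completes the argument. The main obstacle is precisely this uniform analytic inequality together with the control of the a priori unbounded swapped edge $t$: the triangle bound $t\le p+q+s$ and monotonicity of $f_\gamma$ reduce the problem to the one-line concavity estimate above, which fails for $\gamma\ge1$ --- consistent with quasistability being a genuinely subcritical phenomenon.
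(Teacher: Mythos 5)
Your proof is correct and follows essentially the same route as the paper's: dispose of the unmatched/partner cases, then derive a contradiction by swapping partners $\e{x,y}$, $\e{M(x),M(y)}$, bounding the new long edge via the triangle inequality, and concluding with an analytic estimate on $f_\gamma$ that holds precisely because $\gamma<1$. The only cosmetic difference is that you unify the regimes $\gamma<0$, $\gamma=0$, $0<\gamma<1$ through a single concavity/monotonicity argument before checking the corner value $2f_\gamma(\kappa)-f_\gamma(1)-f_\gamma(2\kappa+1)>0$, whereas the paper handles each regime by a separate direct computation (and for $\gamma<0$ avoids the triangle inequality altogether).
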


\begin{proof}
 The points $x$ and $y$ cannot be both unmatched in a minimal matching.  Moreover, if only $y$ (say) is unmatched then, since $\kappa\geq 1$, if \eqref{kappa-ineq} fails then $|x-y|<|x-M(x)|$, so matching $x$ to $y$ instead of $M(x)$ would reduce the cost.  Therefore we can assume $x$ and $y$ are both matched.
  We can also assume that they are not matched to each other,
  otherwise \eqref{kappa-ineq} holds trivially.

 Suppose that \eqref{kappa-ineq} fails, i.e.\
 \begin{equation}|x-M(x)|,\;|y-M(y)|> \kappa|x-y|.\label{fails}\end{equation}
 We claim that, with the appropriate choice of $\kappa$, modifying $M$ by matching instead the pairs $\e{x,y}$ and $\e{M(x),M(y)}$ strictly reduces the cost, in contradiction to minimality.

 For $\gamma=-\infty$ we can take $\kappa=1$ and the claim is immediate.  For $-\infty<\gamma<0$ we can take any $\kappa> 2^{-1/\gamma}$, for then
 \begin{align*}
   |x-M(x)|^\gamma+|y-M(y)|^\gamma&\leq 2 \kappa^\gamma |x-y|^\gamma\\
   <|x-y|^\gamma&\leq |x-y|^\gamma+|M(x)-M(y)|^\gamma.
 \end{align*}

 For the remaining cases $0\leq \gamma<1$ we write
 $$u=\frac{|x-M(x)|}{|x-y|}; \qquad v=\frac{|y-M(y)|}{|x-y|}$$
 so that the assumption \eqref{fails} becomes $u, v>\kappa$.  By the triangle inequality,
 $$|M(x)-M(y)|\leq |x-y|\,(1+u+v).$$
 Therefore, in the case $\gamma=0$, the change in cost associated with the modification is
 $$\log \frac{|x-y|\cdot|M(x)-M(y)|}{|x-M(x)|\cdot|y-M(y)}\leq \log\frac{1+u+v}{uv}=
 \log\Bigl(\frac{1}{uv}+\frac{1}{u}+\frac{1}{v}\Bigr).$$
 We can take $\kappa=3$, so that this is at most $\log(\tfrac 19+\tfrac 23)<0$.

 Finally, for $0<\gamma<1$, let
 $$g(u,v)=u^\gamma+v^\gamma-1-(1+u+v)^\gamma,$$
 so that the reduction in cost is at least $|x-y|\,g(u,v)$.  By differentiating we see that $g$ is increasing in $u$ and $v$, so we need only show that $g(u,u)>0$ for some $u$.  But in fact we have
 \[g(u,u)=u^\gamma\,\bigl[2-u^{-\gamma}-(1/u+2)^\gamma\bigr]\sim u^\gamma\,(2-2^\gamma)\to\infty
 \]
 as $u\to\infty$.
\end{proof}

\begin{proof}[Proof of \cref{quasilimit}]
  We address the $2$-colour case first.  Fix $d\geq 1$ and $\gamma\in(-\infty,1)$.  We will construct the desired matching scheme as a limit, and for this we interpret point processes and matchings as random measures.  Let $\leb$ denote Lebesgue measure on $\R^d$.

  Let $n$ be a positive integer.
  Let $\red_n$ and $\blue_n$ be independent Poisson processes of intensity $1$ on $\R^d$.  Let $Q_n$ be uniformly distributed on the cube $[0,n)^d$ and independent of $(\red_n,\blue_n)$.  Define a \df{$n$-tile} to be any set of the form $[0,n)^d+z+Q_n$ for $z\in\Z^d$.  The $n$-tiles form a random partition of $\R^d$.  By \cref{coincidence,fin-exist}, within each $n$-tile there is a.s.\ a unique $\gamma$-minimal $2$-colour matching of the points of $\red_n$ and $\blue_n$ that lie in the tile.  Let $\mat_n$ be the point process on $(\R^d)^2$ whose support is the union over all tiles of these matchings.  Then $\mat_n$ is an invariant matching scheme of $\red_n$ and $\blue_n$.

  Let $\X_n=(\red_n,\blue_n,\mat_n)$, which we interpret as a point process
  on the disjoint union $\R^d\sqcup\R^d\sqcup(\R^d)^2$.  We claim that the
  sequence $(\X_n)_{n=1}^\infty$ is relatively compact in distribution with
  respect to the vague topology on
point measures on this space.
This follows from \cite[Lemma~16.15]{kallenberg}.  Indeed,
for any bounded $A\subset\R^d$ we have $\red_n(A)\eqd \blue_n(A) \eqd\mbox{Poi}(\leb
A)$, while
any bounded $A\subset(\R^d)^2$ is a subset of $U\times \R^d$ for some Borel bounded $U$, and $\mat_n(U\times \R^d)\leq \red_n(U)\eqd \mbox{Poi}(\leb U)$; therefore $(\mat_n(A))$ is a tight sequence.  Hence there is a subsequence $(n(k))$ and a point process
  $\X=(\red,\blue,\mat)$ for which
  \begin{equation}\X_{n(k)}\tod \X\quad\text{as }k\to\infty\label{conv}\end{equation}
  in the aforementioned topology.


%
%

  Let $\A$ be the set of all bounded Borel subsets of $\R^d$ with $\leb$-null boundaries (which includes balls and rectangles).
  Note that
any $\leb$-null set $D\subset \R^d$
is contained in an open set $D'$ with $\leb D'$ arbitrarily small, and
$\mat_n(D'\times\R^d)\eqd\mbox{Poi}(\leb D')$ for all $n$, which implies
that $\mat(D\times\R^d)=0$ a.s.; similarly $\mat(\R^d\times D)=0$ a.s.
Therefore by
 \cite[Lemma~16.16]{kallenberg}, if $U,V\in\A$ then $\mat_{n(k)}(U\times
 V)\tod\mat(U\times V)$. Also $\red_{n(k)}(U)\tod \red(U)$ for $U\in\A$, and
 similarly for $\blue$.  Moreover, these convergence in distribution
 statements hold jointly for any finite collection of such sets.

  Applying the above to $\red(U_i)$ for a family of disjoint $U_i\in\A$ and similarly for $\blue$ we deduce that $\red$ and $\blue$ are independent Poisson processes of intensity $1$.  Comparing sets in $\A$ with their translations shows that $\X$ inherits the translation invariance of $\X_n$.
  For any $U,V\in \A$ we have $$\mat_n(U\times V)\leq \red_n(U)\wedge \blue_n(V) \quad\text{a.s.},$$ so the same holds in the limit.  Hence, $\mat$ is an invariant matching scheme of $\red$ and $\blue$.

  Next we show that $\mat$ is perfect.
  We claim first that a.s.\ it has unmatched points of at most one colour.  Denote the ball $S_t:=\{x\in\R^d: |x|<t\}$.  Let $\kappa=\kappa(\gamma)$ be the constant from \cref{quasi}.  Fix $t>0$ and let $T=(2\kappa+1)t$.  Consider the matching $\mat_n$. By \cref{quasi}, if $S_T$ is contained entirely within an $n$-tile then $S_t$ cannot contain points of both colours that do not have partners in $S_T$.  Thus,
  \begin{multline*}
  \P\Bigl(\bigl[\red_n(S_t)-\mat_n(S_t\times S_T)\bigr]
\land
\bigl[\blue_n(S_t)-\mat_n(S_T\times S_t)\bigr]>0\Bigr)\\
  \leq 1-\P(S_T\text{ lies in some $n$-tile}).
  \end{multline*}
  Since the right side tends to $0$ as $n\to\infty$, we deduce that in $\mat$, a.s.\ $S_t$ does not contain points of both colours that do not have partners in $S_T$.  In particular, $S_t$ contains unmatched points of at most one colour.  Since $t$ was arbitrary this proves the claim.
  Now \cref{unm} implies that $\mat$ is perfect.

  Finally, we will show that $\mat$ is $\gamma$-minimal.  First note that by the Skorohod coupling theorem \cite[Theorem~4.30]{kallenberg} we can assume that the convergence \eqref{conv} holds a.s.  Passing to a suitable further subsequence and using the Borel-Cantelli lemma, we can also assume that a.s., for each $t<\infty$, the ball $S_t$ lies in an $n(k)$-tile for all sufficiently large $k$.

  Suppose that $\mat$ is not a.s.\ minimal.  Then with positive probability
  it has some finite set of edges $\{\e{r_i,b_i}\}_{i=1}^N$ whose endpoints
  admit a $2$-colour perfect matching of strictly lower cost.  On this event,
  by continuity of the cost function, there exists (random) $\delta>0$ such
  that, defining the balls $U_i=r_i+S_\delta$ and $V_i=b_i+S_\delta$, for
  any $r'_i\in U_i$ and $b'_i\in V_i$ the matching
  $\{\e{r'_i,b'_i}\}_{i=1}^N$ is also not minimal.
By further reducing
  $\delta$ if necessary, we can assume that no $\overline U_i$ or
$\overline V_i$ contains
  another point of $\red$ or $\blue$ (besides the point $r_i$ or $b_i$ at
  its centre).  Since $\mat_{n(k)}\to \mat$ in the vague topology, and since
  $U_i\times V_i$ is a bounded set with no point of $\mat$ on its boundary,
  for all $k$ sufficiently large, $\mat_{n(k)}$ has points $(r^k_i,b^k_i)\in
  U_i\times V_i$ for each $i=1,\ldots, N$.  But we can find $t<\infty$
  (again, random) such that $r_i,b_i\in S_t$ for all $i$, and then
  $r^k_i,b^k_i\in S_{t+\delta}$.  Then $r^k_i,b^k_i$ belong to the same
  $n(k)$-tile for all $k$ sufficiently large, but this contradicts the
  non-minimality assumption.

  The proof in the $1$-colour case is very similar, with the following
  differences.  We interpret a matching scheme of $\red$ as a point process
  $\mat$ on $(\R^d)^2$, where an edge $\e{x,y}$ is represented by point
  masses at \emph{both} $(x,y)$ and $(y,x)$.  We define $(\mat_n,\red_n)$
  and the limit $(\mat,\red)$ as above.  We can rule out points of the form
  $(x,x)$ in the limit $\mat$, because $\mat_n\{(x,y):x,y\in S_t,\;
  |x-y|<\epsilon\}$ is bounded above by the number of ordered pairs of
  distinct points $x,y$ of a Poisson process that lie in $S_t$ and are at distance at most $\epsilon$, which converges to $0$ in distribution as $\epsilon\to 0$
  for fixed $t>0$.  To show that $\mat$ is a matching scheme of $\R$ we use
  the fact that $\mat_n(U\times V)\leq \red_n(U)\wedge\red_n(V)$ for
  disjoint $U,V\in\A$.  To prove that $\mat$ is perfect we use
  quasistability to show that $S_t$ contains at most one point with no
  partner in $S_T$, and then use \cref{unm}.  Minimality is proved as
  before.
\end{proof}

\begin{remark}
  The quasistability property (\cref{quasi}) is essential for the above
  argument.  In particular, we know by \cref{2colclass}(i) that the
  conclusion of \cref{quasilimit} fails in the case of $2$-colour matching on
  $\R$ with $\gamma>1$.
In that case one may check that
  the limiting matching $\mat$
is empty and thus
has all points unmatched -- indeed, all red
  points are `matched to infinity' in the same direction  (left or
  right) in the limit, and all blue points are matched to infinity in the
  opposite direction.
\end{remark}

\begin{proof}[Proof of \cref{minlimit}]
The argument is a straightforward extension of the proof in
\cite[Section~6]{h} from $\gamma=1$ to $\gamma\in(0,\infty)$, and involves
similar formalism to the proof of \cref{quasilimit} above.  We summarize the
main ideas here, referring the reader to \cite{h} for more detail.
We define the average cost of a perfect matching scheme $\mat$ to be
$$\eta(\mat):=\E\int_{[0,1)^d} |x-\mat(x)|^\gamma \,d\red(x),$$
which equals $\E^* X^\gamma$ for an invariant matching scheme.
Assuming the existence of an invariant matching scheme $\mat$ with
$\eta(\mat)<\infty$, let $I$ be the infimum of $\eta(\mat)$ over all
invariant schemes, and take a sequence of invariant schemes $(\mat_n)$ with
$\eta(\mat_n)<C$  for some $C<\infty$ and $\eta(\mat_n)\to I$.  As in the
previous proof we can take a subsequential limit $\widehat\mat$ in
distribution with respect to the vague topology.  We can use the uniform
bound $\eta(\mat_n)<C$ to conclude that $\widehat\mat$ is perfect and
$\eta(\widehat\mat)=I$.  See \cite[Proof of Corollary~11]{h}. It is important that the cost function
$f_\gamma(x)=x^\gamma$ is
nonnegative and tends to $\infty$ as $x\to\infty$.
Finally, $\widehat\mat$ is $\gamma$-minimal, because otherwise it could be modified locally to obtain another matching with $\eta$ strictly less than~$I$.  See \cite[Proof of Theorem~2(i), case $d\geq 3$]{h}.
\end{proof}

\begin{proof}[Proof of \cref{1colclass}(ii)]
For $\gamma\in(-\infty,1)$ this is a special case of \cref{quasilimit}.  For $\gamma=-\infty$ see \cite{hpps}.
\end{proof}

\begin{proof}[Proof of \cref{higher}]
By \cite{hpps} there exist invariant perfect
matching schemes satisfying the bound
$\P^*(X>t)<c/t$ for $2$ colours in $d=2$, and $\P^*(X>t)<e^{-c'{t^d}}$ for 2
colours in $d\geq 3$ and 1 colour in $d\geq 1$.  Combined with \cref{minlimit}
this covers the claimed cases with $\gamma>0$.  \cref{quasilimit} gives
$\gamma\in(-\infty,1)$.  For $\gamma=-\infty$ see \cite{hpps}.
\end{proof}

\section{Uniqueness and finite differences}
\label{uniq}

In this section we use quasistability and levels to prove uniqueness and
finite differences for $\gamma$-minimal $2$-colour matchings in the
subcritical regime $\gamma<1$ with $d=1$.
(Recall that we do not know whether similar results hold for $1$-colour matchings.)
Here is the key step, a property of the random walk $W$ whose proof we defer until after its applications.

\begin{samepage}
\begin{prop}\label{intervals}
Let $R$ and $B$ be independent Poisson processes of intensity $1$ on $\R$, and define the walk $W$ as in \eqref{walk}.  Fix any $a>1$.  There exists an a.s.\ positive, finite random variable $Y=Y_a$ such that $W>0$ on $[-aY,-Y]\cup[Y,aY]$.  Moreover we can take $Y$ to be supported in the discrete set $\{(3a)^n:n\in\Z^+\}$, and to satisfy the tail bound $\P(Y>y)<y^{-\alpha}$ for some $\alpha=\alpha(a)>0$.
\end{prop}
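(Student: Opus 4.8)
The plan is to construct $Y$ by a multi-scale argument on the geometric grid $y_n:=(3a)^n$, $n\in\Z^+$. Since $R,B$ are independent intensity-$1$ Poisson processes, $W$ is a continuous-time symmetric random walk with $W(t)$ of mean $0$ and variance $2|t|$, so under diffusive scaling $W(Tu)/\sqrt{2T}$ converges to standard Brownian motion $B$. Because $W(0-)=0$ and increments over disjoint intervals are independent, the right process $(W(t))_{t\ge0}$ and the left process $(W(-t))_{t\ge0}$ are independent, and by the reflection symmetry of $(R,B)$ they are identically distributed. I would set
\[
E_n^R:=\{W>0\text{ on }[y_n,ay_n]\},\qquad E_n^L:=\{W>0\text{ on }[-ay_n,-y_n]\},\qquad E_n:=E_n^R\cap E_n^L,
\]
and define $Y:=(3a)^N$, where $N:=\min\{n:E_n\text{ holds}\}$ (with $Y:=\infty$ if no such $n$ exists). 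This makes $Y$ automatically positive and supported on $\{(3a)^n\}$, and reduces everything to proving $\P(N>N_0)\le C\rho^{N_0}$ for some $\rho<1$: then $\P(Y>(3a)^{N_0})=\P(N>N_0)$ gives both a.s.\ finiteness and the tail bound with $\alpha=\log(1/\rho)/\log(3a)$.

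The first ingredient is a uniform single-scale lower bound. By the invariance principle $\P(E_n^R)\to\P(B>0\text{ on }[1,a])$, and the latter equals $\tfrac1\pi\arcsin(1/\sqrt a)>0$ by the arcsine law (the event of no zero on $[1,a]$ has probability $\tfrac2\pi\arcsin\sqrt{1/a}$, halved by sign symmetry); the relevant path set is a Wiener-continuity set, so the convergence is genuine. Hence $\P(E_n^R)\ge c_0>0$ for all $n\ge n_0$, and by independence and symmetry of the two sides $\P(E_n)=\P(E_n^R)^2\ge c_0^2$ for $n\ge n_0$. The factor $3$ in the grid is what guarantees a buffer $[ay_n,3ay_n]$ of length comparable to $y_n$ separating consecutive target intervals, allowing the walk to reset between scales.

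The main obstacle is that the events $E_n$ at different scales are \emph{dependent}, through the position carried across scales: the conditional law of $E_n^R$ given the past is governed by $W(ay_{n-1})$, and it degenerates when the walk sits far below $0$. Writing $\mathcal F_{n-1}=\sigma\bigl(W|_{[-ay_{n-1},ay_{n-1}]}\bigr)$ and using the Markov property together with monotonicity in the starting height, one gets $\P(E_n^R\mid\mathcal F_{n-1})\ge c_1>0$ on the good event $\{0\le W(ay_{n-1})\le\sqrt{ay_{n-1}}\}$ (again by the invariance principle at scale $y_{n-1}$), but no such bound can hold unconditionally. The resolution is to track the normalized position $V_n:=W(ay_n)/\sqrt{ay_n}$: by the Markov property and the exact variance computation it satisfies $V_n=(3a)^{-1/2}V_{n-1}+\xi_n$ with independent innovations $\xi_n$ of constant variance and contraction coefficient $(3a)^{-1/2}<1$, so in the large-$n$ regime it is a tight, geometrically ergodic Gaussian autoregression; in particular the two-sided position returns to the good band with geometric frequency and lies there a positive fraction of the time.

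Finally I would combine these through a conditional Borel--Cantelli argument. Applying the conditional lower bound along the geometric sequence of returns of $(V_n)$ to the good band, and using that given the good band and $\mathcal F_{n-1}$ the two sides are conditionally independent so $\P(E_n\mid\text{good band},\mathcal F_{n-1})\ge c_1^2$, geometric ergodicity shows that the first scale at which \emph{both} sides succeed has a geometrically decaying tail, i.e.\ $\P(N>N_0)\le C\rho^{N_0}$. This yields $Y$ finite a.s.\ with $\P(Y>y)<y^{-\alpha}$. The delicate points to execute carefully are the passage from the asymptotic Brownian/AR$(1)$ picture to honest uniform-in-$n$ estimates for the discrete walk, and the bookkeeping that converts geometric ergodicity of the position process into the clean product bound; the remaining steps are standard.
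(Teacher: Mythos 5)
Your strategy is sound and, executed in full, would prove the proposition; but it takes a genuinely different route from the paper at the one step that matters. Both arguments share the same skeleton: the geometric grid $(3a)^n$, independence and colour/reflection symmetry of the two half-lines, monotonicity in the starting height, and a single-scale lower bound via Donsker plus the arcsine law (the paper's \cref{frombridge2} is exactly your invariance-principle estimate). The difference is in how inter-scale dependence is tamed. You condition on the full trajectory $\mathcal{F}_{n-1}$, accept that the conditional success probability degenerates when the endpoint positions $W(\pm a(3a)^{n-1})$ are deeply negative, and repair this by tracking the normalized endpoints as a contracting autoregression, using its ergodicity to show that good scales occur with positive frequency, and running a conditional Borel--Cantelli argument along those scales. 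The paper instead conditions on a \emph{coarser} $\sigma$-algebra, the one generated by the positivity set $S_n$ (\cref{step}): the failure events $\overline{A_1},\dots,\overline{A_n}$ are still measurable with respect to it, yet $\P(A_{n+1}\mid S_n)$ is bounded below uniformly (an essential-infimum bound), because the worst case the positivity set can encode is ``$W\le 0$ on $[L,r]$'', and \cref{stayneg2} shows recovery from exactly that conditioning has probability bounded below, using that the hitting-time tail $\P(T>t)\sim ct^{-1/2}$ of \cref{hit} decays polynomially, so the ratio $\P(T>s+r)/\P(T>s)$ stays away from $1$. That choice of conditioning yields a uniform per-scale bound, so the geometric tail of $N$ is immediate, with no position-tracking at all; your route avoids the conditioning trick but pays for it with Markov-chain machinery.

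That machinery is where your real remaining work lies, and it is more than routine bookkeeping: bare geometric ergodicity does not by itself give an \emph{exponentially} small probability that the good band is visited fewer than $\eps N_0$ times among the first $N_0$ scales, which is what your product bound $(1-c_1^2)^{\eps N_0}$ needs. You must prove an occupation-time concentration estimate, e.g.\ via a Foster--Lyapunov drift bound $\E\bigl[e^{c|V_n|}\bigm|V_{n-1}\bigr]\le\lambda e^{c|V_{n-1}|}+b$ with $\lambda<1$ (available here, uniformly in $n$, since the innovations have uniformly exponential tails even though they are not identically distributed and your chain is time-inhomogeneous), followed by a renewal/Chernoff argument for the return times to a compact box. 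These steps are standard and do close the argument, but they constitute the bulk of a complete proof --- precisely the work that the paper's choice of conditioning makes unnecessary.
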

\end{samepage}

\begin{proof}[Proof of \cref{2colclass}(iv)]
Let $\gamma\in [-\infty,1)$.  There exists a $\gamma$-minimal $2$-colour matching by \cref{quasilimit}.
Let $V=\min((R\cup B)\cap[0,\infty))$ be the first point to the right of the origin.  To establish uniqueness, it suffices to prove that a.s.\ $V$ has the same partner in all $\gamma$-minimal matchings.
Let $\kappa=\kappa(\gamma)$ be as in \cref{quasi}, and let $a=2\kappa+1$.  By \cref{intervals} there exists $Y$ such that $W>0$ on $[-aY,-Y]\cup[Y,aY]$.  This implies that there is some point in $(0,Y]$, so $V$ lies in this interval.  Let $\Lambda$ be the level containing $V$, which is either $\Lambda_0$ or $\Lambda_{-1}$ depending on the colour of $V$.  Then $\Lambda$ has no points in $[-aY,-Y]\cup[Y,aY]$.  Moreover, the set
\begin{equation}\label{hdef}
H:=\Lambda\cap (-Y,Y)
\end{equation}
contains equal numbers of red and blue points.

Recall that a.s.\ every $\gamma$-minimal matching
is perfect (\cref{perfect}).
Let $M$ be any perfect $\gamma$-minimal matching.
We claim that every point in $H$ has its partner in $H$.  If not, since
the colours balance, there must exist a red point $r$ and a blue point $b$ in $H$ both with partners outside $H$.  By \cref{levels}, the partners $M(r),M(b)$ are in $\Lambda$ and therefore outside $[-aY,aY]$.  Therefore
$$|r-M(r)|,|b-M(b)|>(a-1)Y=2\kappa Y.$$
But since $|r-b|\leq 2Y$, this contradicts \cref{quasi}.

We have shown that a.s.\ in every $\gamma$-minimal matching, $H$ is matched
to itself.  But $H$ is finite, so by \cref{coincidence} it a.s.\ has only
one $\gamma$-minimal matching, $m$ say.  So every $\gamma$-minimal matching
$M$ has $M(V)=m(V)$.
Hence, there is a.s.\ a unique minimal matching.

Since the minimal matching is unique, it must be a factor, and it is locally infinite by \cref{locinf}.
\end{proof}

We will use the following estimate for the walk, the analogue of a standard fact for simple symmetric random walk in discrete time.

\begin{lemma}\label{hit}
  Let $R$ and $B$ be independent Poisson processes of intensity $1$ on $\R$, and define the walk $W$ as in \eqref{walk}.  The hitting time $T:=\min\{t>0: W(t)=1\}$ satisfies $\P(T>t)\sim c t^{-1/2}$ as $t\to\infty$ for some fixed $c>0$.
\end{lemma}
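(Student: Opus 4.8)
The plan is to compute the Laplace transform of $T$ exactly and then invert its small-$\lambda$ behaviour via a Tauberian theorem. Recall that $W$ is a continuous-time symmetric random walk: its jumps form a Poisson process of rate $2$, and each jump is $+1$ or $-1$ with probability $\tfrac12$ independently. By symmetry $W$ is recurrent, so $T<\infty$ almost surely.

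First I would set $\psi(\lambda):=\E e^{-\lambda T}$ for $\lambda>0$ and obtain a closed form by a first-jump decomposition. The first jump occurs after an $\mathrm{Exp}(2)$ waiting time $E$, with $\E e^{-\lambda E}=2/(2+\lambda)$. With probability $\tfrac12$ this jump is to $+1$ and we are done; with probability $\tfrac12$ it is to $-1$, after which, by spatial homogeneity and the strong Markov property, the remaining time to reach $+1$ is distributed as a sum of two independent copies of $T$ (for the passages $-1\to 0$ and $0\to 1$). This gives
$$\psi(\lambda)=\frac{2}{2+\lambda}\Bigl(\tfrac12+\tfrac12\,\psi(\lambda)^2\Bigr),$$
i.e.\ $\psi^2-(2+\lambda)\psi+1=0$. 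Since $\psi(\lambda)\in(0,1)$ we take the smaller root,
$$\psi(\lambda)=\frac{2+\lambda-\sqrt{\lambda(\lambda+4)}}{2}.$$

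Next I would extract the behaviour as $\lambda\downarrow 0$. Writing $\sqrt{\lambda(\lambda+4)}=2\sqrt\lambda\,\sqrt{1+\lambda/4}$ gives
$$1-\psi(\lambda)=\tfrac12\sqrt{\lambda(\lambda+4)}-\tfrac\lambda2\sim\sqrt\lambda,\qquad \lambda\downarrow 0.$$
Now $g(s):=\P(T>s)$ is non-increasing, and Fubini yields $\int_0^\infty e^{-\lambda s}g(s)\,ds=(1-\psi(\lambda))/\lambda\sim\lambda^{-1/2}$ as $\lambda\downarrow 0$. Applying Karamata's Tauberian theorem (together with the monotone density theorem) to the non-increasing function $g$, with index $\rho=\tfrac12$ and slowly varying part $\equiv 1$, then gives
$$\P(T>t)\sim\frac{1}{\Gamma(1/2)}\,t^{-1/2}=\frac{1}{\sqrt\pi}\,t^{-1/2},\qquad t\to\infty,$$
which is the claim (with $c=1/\sqrt\pi$, although only $c>0$ is required).

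The computation itself is routine; the two points demanding care are the first-jump decomposition—where one must invoke recurrence to know $T<\infty$ a.s., so that $\psi$ is well defined and the correct branch (the one with $\psi(0+)=1$) is selected—and the Tauberian inversion, where the monotonicity of $g$ is exactly what licenses passing from the Laplace-transform asymptotics back to the genuine tail asymptotics. An alternative, more hands-on route would pass to the embedded discrete walk, use the classical first-passage tail $\P(\tau>n)\sim\sqrt{2/(\pi n)}$ and the representation $T=\sum_{i=1}^\tau E_i$ with $E_i$ i.i.d.\ $\mathrm{Exp}(2)$ independent of $\tau$; concentration of the partial sums around $\tau/2$ would produce the same exponent, but the Laplace-transform argument is both cleaner and yields the constant.
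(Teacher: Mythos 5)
Your proof is correct, but it takes a genuinely different route from the paper's. The paper argues via the reflection principle: since $W$ is symmetric and skip-free upwards, $\P(T\leq t,\,W(t)\leq 0)=\P(T\leq t,\,W(t)\geq 2)$, whence $\P(T>t)=\P(W(t)\in\{0,1\})$; this probability is then estimated by conditioning on the Poisson number of jumps and using standard Binomial (local limit) asymptotics, the two-point set $\{0,1\}$ conveniently eliminating parity issues. You instead compute the Laplace transform exactly via a first-jump decomposition---the quadratic $\psi^2-(2+\lambda)\psi+1=0$, the branch selection, and $1-\psi(\lambda)\sim\sqrt\lambda$ are all correct---and then invert by Karamata's Tauberian theorem plus the monotone density theorem, for which the monotonicity of $g(s)=\P(T>s)$ is precisely the required hypothesis. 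Both routes yield the same constant $c=1/\sqrt\pi$: in the paper's approach this comes from the Skellam/Bessel asymptotics $\P(W(t)\in\{0,1\})=e^{-2t}\bigl(I_0(2t)+I_1(2t)\bigr)\sim 1/\sqrt{\pi t}$. What each buys: the paper's argument is elementary and self-contained, resting only on the reflection trick and local CLT estimates; yours outsources the hard analysis to a standard (but heavier) Tauberian package, in exchange for a purely mechanical computation from which the constant, and indeed recurrence itself, fall out. On that last point, a minor simplification: you do not actually need to invoke recurrence as external input. The quadratic and the branch choice are valid for $\psi(\lambda)=\E\bigl[e^{-\lambda T}\ind[T<\infty]\bigr]$ (with the convention $e^{-\lambda\infty}=0$ the first-jump identity still holds), and your closed form then gives $\psi(0+)=1$, i.e.\ $\P(T<\infty)=1$, so recurrence is a corollary of the computation rather than a prerequisite.
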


\begin{proof}
  An application of the reflection principle gives $\P(T\leq t,\, W(t)\leq 0)=\P(T\leq t,\ W(t)\geq 2)$, which implies that $\P(T>t)=\P(W(t)\in\{0,1\})$.  The latter quantity can be analysed by conditioning on the number of jumps of the walk by time $t$ to reduce it to the discrete random walk analogue, and using standard Binomial distribution asymptotics.  Combining the two cases $\{0,1\}$ eliminates parity issues.
\end{proof}

\begin{proof}[Proof of \cref{2coltail}(ii)]
Let us couple the Palm process $(R^*,B^*)$ with $(R,B)$ by adding a red point at the origin: $R^*=R\cup\{0\}$ and $B^*=B$.  Let the walk $W$ be defined in terms of $(R,B)$ via \eqref{walk} as usual, and let $W^*$ be defined similarly in terms of $(R^*,B^*)$, so that $W^*(x)=W(x)+\ind[x\geq 0]$.

First consider the $(1-)$-minimal matching $M=M_{-\infty}$ under the Palm measure.  By \eqref{meshalkin}, the partner of the point at $0$ equals the first return time of $W^*$ to $0$:
\begin{equation}\label{exact}
M^*(0)=\min\{t>0: W^*(t)=0\},
\end{equation}
and moreover, $M^*(0)$ can be determined from the restrictions of $(R^*,B^*)$ to $[0,M^*(0)]$, so the matching is a finitary factor with $L=M^*(0)$.
But the right side of \eqref{exact} equals $\min\{t>0: W(t)=-1\}$, which by symmetry is equal in law to the time $T$ in \cref{hit}.  So the claimed tail bound holds with $\alpha=1/2$.  By symmetry, the same conclusion holds for $M_\infty$.

Now let $\gamma\in[-\infty,1)$.  With $a=2\kappa+1$ as in the proof of \cref{2colclass}(iv) above, let
$$Y:=\min\Bigl\{y:W>0\text{ on }[-ay,-y]\cup[y,ay] \text{ where }y=(3a)^n\text{ for some } n\in\Z^+\Bigr\},$$
and let $Y^*$ be defined similarly in terms of $W^*$.  (The restriction to $y$ of the form $(3a)^n$ avoids complications involving minima versus infima.)  Since $W^*\geq W$ we have $Y^*\leq Y$. By \cref{intervals}, $Y$ satisfies a power law tail bound, therefore so does $Y^*$, and so does $L:=a Y^*$.  We can determine $L$ from the restriction of $(R^*,B^*)$ to $[-L,L]$.  Moreover, by the argument in proof of \cref{2colclass}(iv) (applied to the Palm process, with $V=0$) we can also determine the partner of $0$ in the minimal matching.
\end{proof}

\begin{proof}[Proof of \cref{2colcomp}(ii), case $\gamma,\gamma'<1$]
Here we prove finite differences for the minimal matchings with $\gamma,\gamma'\in(-\infty,1)$.  (The case $\gamma=1-$ will require a different argument, to be given later.)
We apply the same construction as in the proof of \cref{2colclass}(iv)
above, but using the constant $a=2[\kappa(\gamma)\vee \kappa(\gamma')]+1$.
Then the set $H$ defined in \eqref{hdef} is matched to itself in both the
$\gamma$-minimal matching $M$ and $\gamma'$-minimal matching $M'$.  Hence
the component containing $V$ in the graph with edge set $M\cup M'$ is
confined to $H$,
and is thus finite.
\end{proof}

Now we turn to the proof of \cref{intervals}, which we break into lemmas.

%

\begin{lemma}\label{stayneg2}
The random walk $W$ satisfies
$$\liminf_{r\to\infty}\inf_{s\in[0,r]}
\P\Bigl(W(s+r)>0  \Bigmid W\leq 0 \text{ \rm on }[0,s] \Bigr)
>0.
$$
\end{lemma}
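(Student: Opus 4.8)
The plan is to use the Markov property of $W$ at the deterministic time $s$, reducing the statement to two independent facts: that a single increment of the walk over time $r$ is positive and of magnitude $\gtrsim\sqrt r$ with probability bounded away from $0$, and that, conditioned on staying nonpositive up to time $s\le r$, the walk is not much more negative than $-\sqrt r$ at time $s$. Recall that $W$ is a compound Poisson process with $\E W(t)=0$ and $\operatorname{Var}W(t)=2t$, so $W(t)/\sqrt{2t}$ converges in law to a standard normal $Z$. Write $A_s:=\{W\le0\text{ on }[0,s]\}$ and $\bar F_r(x):=\P(W(r)>x)$. Since the increment $W(s+r)-W(s)$ is independent of the history up to time $s$ and distributed as $W(r)$, while $A_s$ and $W(s)$ are determined by that history,
\[
\P\bigpar{W(s+r)>0\Bigmid A_s}=\E\bigl[\bar F_r\bigpar{-W(s)}\Bigmid A_s\bigr],
\]
and under $A_s$ we have $-W(s)\ge0$.

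First I would bound this below using that $\bar F_r$ is nonincreasing. Fixing a constant $C$ (to be chosen) and restricting the expectation to $\{-W(s)\le C\sqrt r\}$ gives
\[
\P\bigpar{W(s+r)>0\Bigmid A_s}\ \ge\ \bar F_r\bigpar{C\sqrt r}\cdot\P\bigpar{-W(s)\le C\sqrt r\Bigmid A_s}.
\]
The first factor is controlled by the central limit theorem: $\bar F_r(C\sqrt r)=\P\bigpar{W(r)/\sqrt{2r}>C/\sqrt2}\to\P(Z>C/\sqrt2)>0$, so it exceeds a positive constant for all large $r$. It then remains to bound the second factor below, uniformly over $s\in(0,r]$.

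The heart of the argument is the claim that for every $\delta>0$ there is a $C$ with $\sup_{s>0}\P\bigpar{-W(s)>C\sqrt s\mid A_s}\le\delta$. Granting this, since $s\le r$ forces $C\sqrt s\le C\sqrt r$, the second factor is at least $1-\delta$, and the displays combine to give a uniform positive lower bound, hence the claimed liminf. To prove the claim I would condition on the number of jumps $N(s)\sim\operatorname{Poisson}(2s)$, reducing to a discrete simple symmetric random walk of random length. For that walk the reflection principle gives $\P(\text{stays }\le0,\ W(s)=-m)=\P(W(s)=-m)-\P(W(s)=-m-2)$; summing over $m\ge k$ telescopes to $\P\bigpar{A_s,\ W(s)\le-k}=\P(W(s)=-k)+\P(W(s)=-k-1)$, and averaging over $N(s)$ preserves this identity in continuous time. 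The case $k=0$ gives $\P(A_s)=\P(W(s)=0)+\P(W(s)=-1)$, so
\[
\P\bigpar{-W(s)\ge k\Bigmid A_s}=\frac{\P(W(s)=-k)+\P(W(s)=-k-1)}{\P(W(s)=0)+\P(W(s)=-1)}.
\]
The local central limit theorem $\P(W(s)=j)\sim(4\pi s)^{-1/2}e^{-j^2/(4s)}$, uniform for $j=O(\sqrt s)$, shows this ratio is asymptotic to $e^{-k^2/(4s)}$; taking $k=\lceil C\sqrt s\rceil$ makes it at most $\delta$ once $C$ is large, uniformly in $s$.

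The main obstacle is exactly the uniformity in $s$ of this conditional tail bound: the Markov decomposition and the single-increment CLT are routine, but the ratio estimate must be controlled for all $s\in(0,\infty)$ simultaneously. The reflection identity makes the small-$s$ regime transparent (there $\P(-W(s)\ge1\mid A_s)\asymp s\to0$) and the large-$s$ regime is governed by the Gaussian factor $e^{-C^2/4}$, but interpolating between them requires a uniform local limit estimate with explicit error control (noting that in continuous time there is no parity restriction on $W(s)$, so the usual factor of $2$ does not intrude). Assembling the three displays then yields $\P\bigpar{W(s+r)>0\mid A_s}\ge(1-\delta)\,\P(Z>C/\sqrt2)/2>0$ for all large $r$ and all $s\in(0,r]$, which proves the lemma.
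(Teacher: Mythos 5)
Your proposal is correct in substance but takes a genuinely different route from the paper's proof. You apply the Markov property at the \emph{deterministic} time $s$, splitting the problem into (a) a CLT lower bound for the independent increment $W(s+r)-W(s)\eqd W(r)$, and (b) a uniform conditional tail bound showing that, given $A_s=\{W\le 0\text{ on }[0,s]\}$, the value $-W(s)$ exceeds $C\sqrt s$ with probability at most $\delta$; you get (b) from the exact reflection identity $\P\bigl(A_s,\,W(s)\le -k\bigr)=\P(W(s)=-k)+\P(W(s)=-k-1)$ (which is correct, as is the telescoping that produces it) together with a local limit theorem. The paper instead observes that $A_s=\{T>s\}$ for $T$ the hitting time of $1$, applies the \emph{strong} Markov property at $T$ with symmetry (after reaching $1$, the walk ends positive with probability at least $\tfrac12$), and then needs only the already-established tail asymptotics $\P(T>t)\sim ct^{-1/2}$ of \cref{hit} to bound $\P(T>s+r\mid T>s)$ away from $1$ uniformly in $s\in[0,r]$, by splitting into $s\le r/2$ and $s>r/2$. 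The trade-off is clear: the paper's only quantitative input is a scalar tail estimate already proved, while your step (b) requires pointwise control of $\P(W(s)=-k)$ with errors uniform over $k=O(\sqrt s)$ \emph{and} over all $s>0$ simultaneously. You correctly identify this as the crux, and you are right that it cannot be bypassed by a crude Chernoff-type tail bound: since $\P(A_s)\asymp s^{-1/2}$, a non-local estimate of $\P(W(s)\le -k)$ leaves an uncancelled factor of order $\sqrt s$, so genuinely local estimates are needed. That uniform local estimate is true (for instance via uniform asymptotics of the Bessel functions $I_k$, since $\P(W(s)=-k)=e^{-2s}I_k(2s)$, with the small- and moderate-$s$ regimes handled directly by your identity), so your argument does go through; but as written this ingredient is invoked rather than established, which makes your proof heavier and less self-contained than the paper's, whose hitting-time decomposition sidesteps local estimates at general lattice points entirely. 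What your route buys in exchange is explicit control of the full conditional law of $W(s)$ given $A_s$, a stronger statement than the lemma itself.
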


\begin{proof}
Let
$$T:=\min\{t>0: W(t)>0\}=\min\{t>0: W(t)=1\}.$$
By the strong Markov property at $T$ and symmetry we have
$$\P\Bigl(W(s+r)>0\Bigmid T\Bigr)\geq \tfrac12 \,\ind[T<s+r],$$
and therefore
\begin{align*}&\P\Bigl(W(s+r)>0\Bigmid W\leq 0 \text{ \rm on }[0,s]\Bigr)\\ &=\P\Bigl(W(s+r)>0\Bigmid T>s\Bigr)\geq \tfrac12 \,\P(T<s+r\mid T>s),
\end{align*}


Write $p_t=\P(T>t)$. Using \cref{hit}, for $s\in[0,r]$ we have
\begin{align*}
\P(T>s+r  \mid T>s )=
\frac{p_{s+r}}{p_{s}}
\leq \frac{p_{r}}{p_{r/2}}\vee \frac{p_{3r/2}}{p_{r}} \xrightarrow{r\to\infty} 2^{-1/2}\vee (\tfrac32)^{-1/2}=\surd\tfrac23,
\end{align*}
where the inequality arises from splitting into the cases $s\leq r/2$ and $s>r/2$.  Therefore the expression in the lemma is at least $\tfrac12 (1-\surd\tfrac23)$.
\end{proof}

%

\begin{lemma}\label{frombridge2}
For any fixed $1<u<v$, the random walk $W$ satisfies
$$\liminf_{r\to\infty}\inf_{k\geq 0}\P\Bigl(W>0 \text{ \rm on }[ur,vr]\Bigmid W(r)=k\Bigr)>0.
$$
\end{lemma}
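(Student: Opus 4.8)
The plan is to exploit the Markov property of $W$ at time $r$ and then remove the infimum over $k$ by a monotonicity argument. Write $a:=u-1$ and $b:=v-1$, so $0<a<b$. Conditional on $W(r)=k$, the shifted process $\tilde W(s):=W(r+s)-k$ is an independent copy of the walk started at $0$, and the event $\{W>0\text{ on }[ur,vr]\}$ becomes $\{k+\tilde W(s)>0\text{ for all }s\in[ar,br]\}$. For a fixed realization of $\tilde W$ this event is nondecreasing in $k$, hence so is its probability $\P(W>0\text{ on }[ur,vr]\mid W(r)=k)$. Therefore the infimum over $k\geq 0$ is attained at $k=0$, and it remains to prove
$$\liminf_{r\to\infty}\P\bigl(\tilde W>0\text{ on }[ar,br]\bigr)>0.$$

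To bound this probability below uniformly in large $r$, I would split the interval at its left endpoint $ar$. Fix a large constant $\lambda>0$ to be chosen. I will require two independent things: first that $\tilde W(ar)\geq\lambda\sqrt r$, and second that the post-$ar$ increment walk $W'(s):=\tilde W(ar+s)-\tilde W(ar)$ stays above $-\lambda\sqrt r$ throughout $s\in[0,(b-a)r]$. On the intersection of these events, $\tilde W(ar+s)=\tilde W(ar)+W'(s)>\lambda\sqrt r-\lambda\sqrt r=0$ for all such $s$, so $\tilde W>0$ on $[ar,br]$; and the two events are independent by the Markov property. Thus
$$\P\bigl(\tilde W>0\text{ on }[ar,br]\bigr)\geq \P\bigl(\tilde W(ar)\geq\lambda\sqrt r\bigr)\cdot\P\Bigl(\min_{0\leq s\leq (b-a)r}W'(s)>-\lambda\sqrt r\Bigr).$$

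Each factor is then controlled by standard one-dimensional estimates. Since $W(t)$ has mean $0$ and variance $2t$ (it is the difference of two independent rate-$1$ Poisson counts), the central limit theorem gives $\P(\tilde W(ar)\geq\lambda\sqrt r)\to\P(Z\geq\lambda/\sqrt{2a})>0$, where $Z$ is standard normal. For the second factor, the reflection principle for the continuous-time skip-free walk $W'$ yields $\P(\min_{[0,t]}W'\leq -m)\leq 2\,\P(W'(t)\leq -m)$; taking $t=(b-a)r$ and $m\approx\lambda\sqrt r$ and applying the central limit theorem again gives $\P(\min_{[0,(b-a)r]}W'\leq -\lambda\sqrt r)\to 2\,\P(Z\leq -\lambda/\sqrt{2(b-a)})$. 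Choosing $\lambda$ large enough that this limit is below $\tfrac12$ makes the second factor at least $\tfrac12$ for all large $r$, while the first factor stays bounded below by a positive constant. The product therefore has a positive $\liminf$, as required.

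I expect the only delicate point to be the uniform-in-$r$ lower bound on $\P(\tilde W>0\text{ on }[ar,br])$, and within it the clean application of the reflection principle to the running minimum of the continuous-time walk: skip-freeness makes the minimum dropping to $-m$ equivalent to hitting $-m$, so the usual reflection argument applies verbatim. The reduction from general $k\geq 0$ to $k=0$ via the monotonicity in $k$ is essentially free, and the remaining ingredients are routine central limit asymptotics for the Skellam-distributed increments of $W$.
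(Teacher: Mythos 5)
Your proof is correct, and its core estimate goes by a genuinely different route than the paper's. The first step is the same: both you and the paper use monotonicity in $k$ to reduce to $k=0$, and then the Markov property to reduce the statement to $\liminf_{r\to\infty}\P\bigl(W>0\text{ on }[(u-1)r,(v-1)r]\bigr)>0$. From there the paper invokes a functional limit theorem: the rescaled process $\bigl(r^{-1/2}W(rt)\bigr)_{t\ge0}$ converges in the Skorohod topology to $\sqrt2\,B(t)$ (Donsker, or general L\'evy-process limit theorems), so the probability converges to $\P\bigl(B>0\text{ on }[u-1,v-1]\bigr)$, which is evaluated exactly as $\pi^{-1}\arcsin\sqrt{(u-1)/(v-1)}$ via the arcsine law for the last zero. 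You instead avoid functional convergence entirely: you manufacture a buffer of height $\lambda\sqrt r$ at time $(u-1)r$ and then control the running minimum of the subsequent increments by the reflection principle, using only the one-dimensional CLT for the Skellam-type marginals. Your two events are indeed independent by the independent-increments property, the skip-freeness justification for the reflection inequality $\P(\min_{[0,t]}W'\le -m)\le 2\,\P(W'(t)\le -m)$ is sound (with the harmless integer rounding of $m$ that you flag), and choosing $\lambda$ large makes the second factor at least $\tfrac12$ while the first stays bounded below, so the product argument closes. What each approach buys: the paper's is shorter given the cited machinery and yields the exact limiting constant; yours is more elementary and self-contained (reflection principle plus CLT, both of which are already in the paper's toolbox via \cref{hit}), at the cost of a cruder, non-explicit lower bound --- which is all the lemma requires.
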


\begin{proof}\sloppypar
For fixed $u,v$ and $r$ the probability is clearly increasing in $k$, so it
suffices to take $k=0$, in which case by the Markov property it equals
\begin{align}\label{PW>}
\P\Bigl(W>0 \text{ on } \bigl[(u-1)r,(v-1)r\bigr]\Bigr).
\end{align}

As $r\to\infty$,
the rescaled process $(r^{-1/2}\,W(rt))_{t\ge0}$ converges in distribution
in the Skorohod topology on $D[0,\infty)$
to $\sqrt2 B(t)$, where $B$ is standard Brownian motion;
this is an easy consequence of Donsker's theorem \cite[Theorem~14.9]{kallenberg}
and a random time change;
alternatively it follows directly by general limit theorems for L\'evy processes
\cite[Theorems 15.14 and 15.17]{kallenberg}.
Consequently, the probability in \eqref{PW>} converges to
\begin{align*}\label{brown}
\P\Bigl(B>0 \text{ on }[u-1,v-1]\Bigr)
.\end{align*}
By symmetry and the arcsine law for the last zero \cite[Theorem~13.16]{kallenberg}, the last probability equals $\pi^{-1}\arcsin \sqrt{(u-1)/(v-1)}$, which is positive.
\end{proof}

The essential infimum, $\essinf X$, of a random variable $X$ is the largest constant $x$ such that $X\geq x$ a.s.

\begin{cor}\label{step}
Define the random set $S(r)=\{x\in(0,r]:W(x)>0\}$.  For each $a>1$,
$$\liminf_{r\to\infty}\essinf\P\Bigl(W>0 \text{ \rm on } [3r,3ar]\Bigmid S(r)\Bigr)>0.$$
\end{cor}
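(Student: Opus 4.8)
The plan is to interpolate through the deterministic intermediate time $2r$, using \cref{frombridge2} to pass from a nonnegative value at $2r$ to positivity on $[3r,3ar]$, and \cref{stayneg2} to guarantee that $W(2r)\ge 0$ with probability bounded below whatever sign pattern $S(r)$ dictates.

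First I would record the reduction. Since $S(r)$ is measurable with respect to the history $\mathcal F_{2r}$, conditioning on $\mathcal F_{2r}$ and using that $\{W(2r)\ge 0\}\in\mathcal F_{2r}$ gives, by the Markov property,
$$\P\bigl(W>0 \text{ on }[3r,3ar]\bigm| S(r)\bigr)\ \ge\ \E\bigl[\ind[W(2r)\ge 0]\,h_r(W(2r))\bigm| S(r)\bigr],$$
where $h_r(k):=\P(W>0\text{ on }[3r,3ar]\mid W(2r)=k)$. Writing $[3r,3ar]=[\tfrac32(2r),\tfrac{3a}{2}(2r)]$ and applying \cref{frombridge2} with base time $2r$ and $1<u=\tfrac32<v=\tfrac{3a}2$ (valid since $a>1$) yields a constant $c_1>0$ with $h_r(k)\ge c_1$ for all $k\ge 0$ and all large $r$. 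Hence the displayed quantity is at least $c_1\,\P(W(2r)\ge 0\mid S(r))$, and the Corollary reduces to showing that $\liminf_{r\to\infty}\essinf \P(W(2r)\ge 0\mid S(r))>0$.

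The remaining bound I would prove by a last-exit decomposition at $s^\ast:=\sup\{t\le r:W(t)>0\}=\sup S(r)$, which is a function of $S(r)$. If $r\in S(r)$ then $W(r)\ge 1$, and by the Markov property at $r$ and symmetry of the increments $\P(W(2r)\ge 0\mid\mathcal F_r)=\P(\widetilde W(r)\ge -W(r))\ge\tfrac12$ for a fresh walk $\widetilde W$, so the bound holds with constant $\tfrac12$. If $r\notin S(r)$ then $W(s^\ast)=0$ (the walk steps down to $0$ at its last positive time, or $s^\ast=0$ when $S(r)=\emptyset$), and $W\le 0$ on $(s^\ast,r]$. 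By the last-exit decomposition the post-$s^\ast$ process $t\mapsto W(s^\ast+t)$ is a fresh walk from $0$, independent of the sign pattern on $[0,s^\ast]$, conditioned to stay $\le 0$ on $(0,s]$ with $s:=r-s^\ast\in[0,r]$. Since $W(2r)>0$ corresponds to this fresh walk being positive at time $2r-s^\ast=r+s$, \cref{stayneg2} gives $\P(W(2r)>0\mid S(r))\ge c$ uniformly in $s\in[0,r]$ for all large $r$. Combining the two cases bounds $\P(W(2r)\ge 0\mid S(r))$ below by $\min(\tfrac12,c)>0$, completing the proof.

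The main obstacle is this last step, specifically justifying the last-exit decomposition: the time $s^\ast$ is not a stopping time, so the strong Markov property does not apply directly. I would handle this by the standard last-exit argument for the compound Poisson walk—decomposing according to the value of $s^\ast$ and verifying that, conditionally on $s^\ast$ and on $W(s^\ast)=0$, the future is a fresh walk conditioned to stay nonpositive for the prescribed duration and independent of the past—or, if a self-contained treatment is preferred, by deriving it as a limit of the corresponding elementary identity for the discrete-time simple random walk before passing to the continuous-time process. The one point requiring care is uniformity: because $s^\ast$ and hence $s=r-s^\ast$ are determined by $S(r)$, the essential infimum over sign patterns translates into a supremum over $s\in[0,r]$, which is exactly the range controlled by the uniform lower bound in \cref{stayneg2}.
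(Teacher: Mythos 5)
Your proposal is correct and follows essentially the same route as the paper: the Markov property at the intermediate time $2r$ combined with \cref{frombridge2} (with $u=\tfrac32$, $v=\tfrac{3a}{2}$) to reduce to a lower bound on $\P(W(2r)\geq 0\mid S(r))$, then a decomposition at $L=\sup(S(r)\cup\{0\})$ into the two cases $W(r)>0$ (symmetry gives $\tfrac12$) and $W(L)=0$ with $W\leq 0$ on $[L,r]$ (handled by \cref{stayneg2} with $s=r-L$). The only difference is presentational: you explicitly flag and justify the last-exit conditioning at the non-stopping time $L$, which the paper passes over tersely via the identity $\P(D_r\mid S(r))=\P(D_r\mid L,\,W(L))$.
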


\begin{proof}[Proof of \cref{step}]
We use the Markov property at the intermediate time $2r$.
By \cref{frombridge2} we have
$$\liminf_{r\to\infty}\inf_{k> 0}\P\Bigl(W>0 \text{ \rm on }[3r,3ar]\Bigmid W(2r)=k\Bigr)>0.
$$
Therefore, defining $D_r=\{W(2r)> 0\}$, it is enough to prove that
\begin{equation}\label{dbound}
\liminf_{r\to\infty}\essinf\P\bigl(D_r\bigmid S(r)\bigr)>0.
\end{equation}

Let $L= \sup (S(r)\cup\{0\})$, and note that
$$\P(D_r\mid S(r))=\P\bigl(D_r\bigmid L,\,W(L)\bigr).$$
We consider two cases.  If $L=r$ then $W(r)>0$.  But for any integer $k>0$ we have
$$\P\bigl(D_r\bigmid L=r,\,W(L)=k\bigr)=\P\bigl(D_r\bigmid W(r)=k\bigr)\geq \P\bigl(D_r\bigmid W(r)=1\bigr)\geq \tfrac12$$
by symmetry.
On the other hand, if $L\in[0,r)$ then $W(L)=0$ and $W\leq 0$ on $[L,r]$.  Moreover, for $t<r$,
\begin{align*}
\P\bigl(D_r\bigmid L=t\bigr)
&=\P\bigl( W(2r)>0\bigmid W(t)=0,\,W\leq 0 \text{ on }[t,r]\bigr)\\
&= \P\Bigl(W(2r-t)>0  \Bigmid W\leq 0 \text{ on }[0,r-t]\Bigr)
\end{align*}
which by \cref{stayneg2} (with $s=r-t$) is bounded away from $0$ as $r\to\infty$ uniformly in $t$.
Combining the two cases we obtain \eqref{dbound}.
\end{proof}

\begin{proof}[Proof of \cref{intervals}]
Fix $a>1$, and for integer $n\geq 1$ define events
\begin{align*}
  A^+_n&:=\bigl\{W>0 \text{ on } [(3a)^{n},a(3a)^{n}]\bigr\};\\
  A^-_n&:=\bigl\{W>0 \text{ on } [-a(3a)^{n},-(3a)^{n}]\bigr\},
\end{align*}
and write
$
  A_n:=A^+_n\cap A^-_n
$.

Define the random sets
\begin{align*}
S^+_n&:=\bigl\{x\in (0,a(3a)^n]: W(x)>0\bigr\};\\
S^-_n&:=\bigl\{x\in [-a(3a)^n,0): W(x)>0\bigr\},
\end{align*}
and let $S_n=S^+_n\cup S^-_n$.  By \cref{step} there exist $\delta=\delta(a)>0$ and $N=N(a)<\infty$ such that for all $n\geq N$,
$$\P(A_{n+1}^+\mid S^+_n)>\delta\qquad\text{a.s.}$$
By symmetry and independence of the left and right half lines it follows that for $n\geq N$,
$$\P(A_{n+1}\mid S_n)>\delta^2\qquad\text{a.s.}$$
In particular,
$$\P\Bigl(A_{n+1}\Bigmid \textstyle \bigcap_{i=1}^n \overline{A_n}\Bigr)>\delta^2$$
for $n\geq N$, and since this conditional probability is positive for all $n$, it is bounded below for all $n\geq 0$.  We deduce that $\P(\cup_{n=1}^\infty A_n)=1$, and moreover that $\P(\cap_{i=1}^n \overline{A_n})$ decays exponentially as $n\to\infty$.  Since on $A_n$ we can set $Y=(3a)^n$ we obtain the claimed result.
\end{proof}

\section{Edge Orientations in the subcritical case}
\label{orient}

In this section we prove a structural property of the $2$-colour $\gamma$-minimal matching
for $\gamma<1$.  In addition to being interesting in its own right, this
will enable us to prove finite differences between $\gamma<1$ and
$\gamma=1-$.  Recall that, for $\gamma<1$, each point is matched within its
own level (\cref{levels}).

\begin{thm}[Locally infinite levels]\label{nest}
  Let $R$ and $B$ be independent Poisson processes of intensity $1$ on $\R$.  Let $\gamma\in[-\infty,1)$ and consider the $\gamma$-minimal $2$-colour matching.  Almost surely, for any $x\in\R$ and $k\in\Z$, the level $\Lambda_k$ contains infinitely many edges that cross $x$.  These edges can be ordered as $e_1,e_2,\ldots$, where $e_{i+1}$ straddles $e_i$ for each $i\geq 1$; then their orientations alternate.
\end{thm}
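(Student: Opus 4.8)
The plan is to reduce everything to the structure of the matching within a single level, and to separate the two assertions (infinitely many crossing edges; alternating orientations), since the second is purely combinatorial. Throughout I work with the unique $\gamma$-minimal matching $M$ (\cref{2colclass}(iv)), which is a.s.\ perfect (\cref{perfect}) and has no entwined edges (by \cref{ineq} and the definition of $\gamma$-minimality for $\gamma<1$). By \cref{levels} every point is matched within its own level, so it suffices to study the restriction of $M$ to a fixed level $\Lambda_k$, which is itself a $\gamma$-minimal matching of the bi-infinite, strictly colour-alternating point set $\Lambda_k$. First I record that any two edges crossing a common point $x$ can be neither separate (their spans both contain $x$) nor entwined (none are), so one straddles the other; thus the edges of $\Lambda_k$ crossing $x$ are totally ordered by straddling, and once infinitude is known they can be listed as $e_1,e_2,\dots$ with $e_{i+1}$ straddling $e_i$.

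For the orientation statement I would argue by colour parity. Fix consecutive crossing edges $e_i=\langle p',q'\rangle$ and $e_{i+1}=\langle p,q\rangle$ with $p<p'<x<q'<q$ and no crossing edge between them. Every point of $\Lambda_k$ in the open interval $(p,p')$ must be matched within $(p,p')$: a partner lying beyond $p'$ or beyond $q'$ would create either an edge crossing $x$ strictly between $e_i$ and $e_{i+1}$ or an edge entwined with $e_i$ or $e_{i+1}$, all of which are excluded. Hence $(p,p')$ contains an even number of level points, and since the level alternates in colour this forces the endpoints $p$ and $p'$ to have opposite colours. As the orientation of an edge is determined by the colour of its left endpoint, $e_i$ and $e_{i+1}$ have opposite orientations, giving the claimed alternation. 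This step uses only perfectness, the absence of entwined edges, and the alternation of colours within a level, so it is robust.

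The real work is to prove that infinitely many edges of $\Lambda_k$ cross $x$. I emphasise that local infiniteness of the full matching (\cref{locinf}) does not suffice: for $\gamma=1-$ the matching $M_{-\infty}$ is locally infinite, yet each level contributes at most one crossing edge, the crossings being spread over infinitely many levels. Thus the argument must exploit the strict inequality in \cref{ineq} for $\gamma<1$ (straddling strictly cheaper than entwining, and cheaper than separating precisely when the straddled gap is small). Since $M$ is a factor, it is enough to show that a.s.\ there are crossing edges of arbitrarily large length. The plan is a multi-scale argument in the spirit of \cref{intervals,step}: at a sequence of scales $Y=(3a)^n\to\infty$ I would use the walk estimates to produce, with conditional probability bounded below given the configuration inside, an event on which $\Lambda_k\cap(x-Y,x+Y)$ forms a balanced block matched within itself whose internal geometry (a comparatively small central gap, guaranteed by the excursion structure of $W$) forces its $\gamma$-minimal matching to contain an edge straddling $x$ of length comparable to $Y$. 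Combining the conditional lower bounds across scales, exactly as \cref{step} is bootstrapped to \cref{intervals}, would show that infinitely many scales succeed almost surely, yielding crossing edges of unbounded length and hence infinitely many of them.

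The main obstacle is the quantitative heart of the last paragraph: showing that the $\gamma$-minimal matching of the scale-$Y$ block is forced to send some point on the left of $x$ to some point on its right. A naive balanced self-matched block need not cross its centre (if the central gap is large, the minimal matching separates there), so the event must be engineered, using the walk, to guarantee a small central gap relative to the flanking structure, so that \cref{ineq} makes straddling strictly preferable to any separating alternative. Controlling this comparison uniformly in the surrounding (random) configuration, and verifying the conditional-probability lower bound needed to run the Borel--Cantelli-type conclusion, is where quasistability (\cref{quasi}) and the fluctuation estimates for $W$ must be combined; this is the step I expect to require the most care.
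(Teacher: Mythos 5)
Your reduction to a single level, the total ordering of crossing edges by straddling, and the colour-parity argument for alternating orientations are all correct (the alternation step is essentially the paper's, run by counting rather than by contradiction), and your observation that \cref{locinf} alone cannot suffice --- witness $M_{-\infty}$ for $\gamma=1-$, which is locally infinite but has at most one crossing edge per level --- is exactly the right diagnosis. The problem is that the heart of the theorem is the infinitude of crossing edges within a fixed level, and there your proposal is a plan rather than a proof. Everything hinges on two unproved ingredients: a deterministic criterion on the point configuration that would force the $\gamma$-minimal matching of a self-matched block to contain an edge straddling its centre with length comparable to the block, and a uniform conditional lower bound (given all smaller scales) on the probability of engineering that criterion, so that a conditional Borel--Cantelli argument applies. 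You explicitly defer both, and neither is routine: the natural colour-imbalance criterion can only ever certify a bounded number of crossing edges, because the orientations alternate and hence the signed count of crossing edges at a point lies in $\{-1,0,1\}$; to force \emph{nested} crossing edges at every scale you would need to control the minimal matching of the annular point sets $\Lambda_k\cap\bigl((-Y_{n+1},-Y_n)\cup(Y_n,Y_{n+1})\bigr)$ relative to the inner block, uniformly in the conditioning, and none of this is carried out.

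It is worth knowing that the paper proves infinitude by an entirely soft argument, with no walk fluctuation estimates or multi-scale construction at all. Each level restriction $m_j$ is classified as left, right, or $\infty$ type according to the common orientation of its outer (unstraddled) edges; one checks that right type propagates to level $j+1$ and left type to level $j-1$, so there are random thresholds $K_-\leq K_+$ separating the three regimes. Replacing each level by the appropriate alternating matching turns $M$ into the $(1-)$-minimal matching $M_{K_-}$ (respectively $M_{K_+}$) of \eqref{defmk} by a translation-equivariant map, so these are invariant schemes; by \cref{locinf} they must be locally infinite, while $M_j$ is locally finite for finite $j$, forcing $K_\pm\in\{-\infty,\infty\}$. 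Finally ergodicity of $M$ (a consequence of uniqueness, \cref{2colclass}(iv)) shows one type holds for all levels simultaneously, and reflection invariance of the law of $M$ (again from uniqueness) rules out all-left and all-right, leaving $\infty$ type everywhere. So uniqueness is the real engine here; your route, even if it could be completed, would be doing substantially harder quantitative work than the statement requires.
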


\begin{proof}
  The argument will be similar to the proof of \cref{2colclass}(iii) concerning $\gamma=1-$ (but more intricate).  Let $M$ be the minimal matching, which is unique by \cref{2colclass}(iv) and perfect by \cref{perfect}.  No two edges are entwined, so each point is matched within its level by \cref{levels}.

  Fix $k\in\Z$ and let $m_k=M|_{\Lambda_k}$ be the restriction of the matching to level $k$.
  Since bounded intervals contain finitely many points, either $m_k$ is locally finite or every $x\in\R$ is crossed by infinitely many edges of $m_k$.
  Consider any location $x\in\R\setminus(R\cup B)$ and let $e_1,e_2,\ldots,e_n(,\ldots)$ be the edges of $m_k$ that cross $x$, ordered so that $e_{i+1}$ straddles $e_i$ for each $i$.  We claim that their orientations alternate.  Indeed, suppose that $e'=\e{r',b'}$ straddles $e=\e{r,b}$, and that they belong to the same level and have the same orientation, say right.  Since the points of the level alternate in colour, it has more blue than red points in the interval $(r',r)$.  Since edges do not entwine, at least one of these blue points must be matched to a red point between $b$ and $b'$, giving an edge of $m_k$ of the opposite orientation straddling $e$ and straddled by $e'$.  This proves the claim.

  Call an edge of $m_k$ \df{outer} if it not straddled by any other edge of $m_k$.  If $m_k$ is locally finite then it has infinitely many outer edges, and, since the colours alternate, all its outer edges must have the same orientation; $m_k$ partitions $\R$ into an alternating sequence of outer edges and \df{gaps}, i.e.\ intervals crossed by no edges.  (The outer edges may straddle other edges of $m_k$.)  We say that $m_k$ has \df{left type} if all its outer edges are oriented left, or \df{right type} if they are oriented right, or \df{$\infty$ type} if it is locally infinite.

  Now we consider how different levels are related.  Suppose that $m_k$ has
  right type, and consider a gap, which is an interval $I=(b,r)$ where
  $b,r\in\Lambda_k$ with $b$ blue and $r$ red, and no edges of $m_k$
  crossing $I$.  The walk $W$ satisfies $W\leq k$ on $I$.  In particular $I$
  contains no points of $\Lambda_{k+1}$.  Moreover, since the same applies
  to each gap and there
are no entwined edges, $I$ is crossed by no edges of $m_{k+1}$.
  On the other hand, a.s., some outer edge of $m_k$ must cross some point of $\Lambda_{k+1}$, otherwise we would have $W\leq k+1$ on $\R$.  Suppose that $\e{r,m_k(r)}$ is such an outer edge.
  Let $r'$ be the first point of $\Lambda_{k+1}$ to the right of $r$, which must be red because $W(r+)=k+1$.  The partner $b'=m_{k+1}(r')$ must be to the right of $r'$, since edges do not entwine, and by the previous remarks, $\e{r',b'}$ is an outer edge.  Therefore, $m_{k+1}$ also has right type.  A similar argument shows that if $m_k$ has left type then so does $m_{k-1}$.

  We conclude that there exist (random) $K_-,K_+\in\Z\cup\{-\infty,\infty\}$ with $K_-\leq K_+$ such that for each $j\in\Z$,
  $$m_j\text{ has type:}\left\{\begin{array}{l@{\quad}r@{\ }l}
    \text{right},& K_+-\tfrac12<&j,\\[2pt]
    \infty,& K_--\tfrac12<&j<K_+-\tfrac12,\\[2pt]
    \text{left},& &j<K_--\tfrac12.
  \end{array}\right.$$
  So far we have used that $M$ is perfect and has no entwined edges, and that each level is unbounded in both directions.  Next we will use invariance properties to show that $K_-=-\infty$ and $K_+=\infty$.

  First we claim that all levels have the same type; the idea is that it is impossible to specify a level in an invariant way.
  Recall the $(1-)$-minimal matchings $(M_k:k\in\Z\cup\{-\infty,\infty\})$ defined in \eqref{defmk}.  Construct the new matching $M_{K_-}$, where $K_-$ is the random variable above.  This is a perfect matching of $R$ and $B$, and it can be constructed as a translation-equivariant function of $M$: within each level $j$ we replace the matching $m_j$ with one of the two matchings from the earlier proof: either $m_j^-$ (if $m_j$ has left type) or $m_j^+$ (if $m_j$ has right or $\infty$ type).  Therefore, $M_{K_-}$ is an invariant matching scheme, so by \cref{locinf}, it is locally infinite a.s.  But as shown in the proof of \cref{2colclass}(iii), $M_k$ is locally finite for finite $k$.   Therefore, $K_-\in\{-\infty,\infty\}$ a.s.  Similarly, considering the matching $M_{K_+}$ shows that $K_+\in\{-\infty,\infty\}$ a.s.  Thus, a.s., all levels of $M$ have the same type.

  Since the $\gamma$-minimal matching $M$ is unique, it is a factor, and thus ergodic.
  The event that all levels have left type is translation invariant, and similarly for right and type $\infty$ types.  Therefore for one of the three types, a.s.\ all levels have that type.  Finally, we can rule out left type and right type, because $M$ is invariant in law under reflections of $\R$.  Thus all levels are locally infinite, completing the proof.
\end{proof}

We can now prove finite differences in the remaining case.

\begin{proof}[Proof of \cref{2colcomp}(ii), case $\gamma'=1-$]\sloppypar
Let $\gamma\in[-\infty,1)$ and let $M$ be the $\gamma$-minimal matching.
Let $M'$ be one of the $(1-)$-minimal matchings $(M_k:
k\in\Z\cup\{-\infty,\infty\})$.  Since both $M$ and $M'$ match within
levels, it is enough to prove finite differences within a level, say
$\Lambda_j$.  Recall that the restriction of $M'$ to $\Lambda_j$ is one of
the two alternating matchings, say without loss of generality $m_j^+$.  Fix
a point $x\in\Lambda_j$.  By \cref{nest} there exists a right-oriented edge
$\e{r,b}\in M|_{\Lambda_j}$ with $r<x<b$.  Since all edges of $m_j^+$ are
oriented right, $r$ and $b$ are matched in the interval $[r,b]$ in $M'$.
Since there are no entwined edges, every point in $\Lambda_j\cap[r,b]$ is
matched within this set in both $M$ and $M'$.
\end{proof}

\begin{remark}
  Notwithstanding the above proof, the finite differences property does \emph{not} hold between any two of the distinct $(1-)$-minimal $2$-colour matchings $M_k$, as is easily checked by considering a level on which they differ.
\end{remark}

\section{Tail Bounds}
\label{tail}

\begin{proof}[Proof of \cref{2coltail}(i)]
  The condition $\E^* X^{1/2}=\infty$ holds for \emph{any} invariant $2$-colour matching scheme of $R$ and $B$, by \cite[Theorem~2]{hpps}.  We therefore turn to the claimed upper tail bound.

  For $\gamma=1-$ and $M=M_{-\infty}$ the argument was already given in the proof of \cref{2coltail}(ii): by \eqref{meshalkin}, $X=M^*(0)$ is the first return time of the walk $W^*$ to $0$ \eqref{exact}, so the claimed bound holds by \cref{hit}.  The case $M=M_\infty$ is similar.

 Let $\gamma\in [-\infty,1)$ and let $M$ be the unique $\gamma$-minimal $2$-colour matching in $d=1$.
  Let $\kappa=\kappa(\gamma)$ be the constant from \cref{quasi}.  Let $t>0$ and call a red or blue point $x$ \df{bad} if $|x-M(x)|>\kappa t$, and \df{good} otherwise.  By \cref{quasi}, the interval $[0,t]$ cannot contain bad points of both colours.

  Suppose that $[0,t]$ contains bad red points, and let $U$ and $V$ be the
  first and last bad red point in the interval.  Then no good point in
  $[U,V]$ is matched outside $[U,V]$, since that would entail entwined
 edges, in contradiction to \cref{ineq} (see also \cref{possibilities}).
Hence $[U,V]$ contains good red and good blue points in equal numbers, and so the number of bad red points in $[0,t]$ equals $W(V+)-W(U-)$, where $W$ is the random walk of \eqref{walk}.  We deduce that
  \begin{align*}
   t\,\P^*(X>\kappa t) &=\E\#\bigl\{x\in R\cap[0,t]: x \text{ is bad}\bigr\} \\
   &\leq \E\sup_{u,v:\,0<u\leq v<t} \bigl(W(v)-W(u)\bigr)^+ \\
   &\leq 2\,\E\sup_{x\in[0,t]} |W(x)|
   \leq C\,t^{1/2},
  \end{align*}
  for some fixed $C>0$.  (To check the last inequality, Doob's martingale inequality \cite[Proposition~7.16]{kallenberg} gives $\|\sup_{x\in[0,t]}|W(x)|\|_2\leq 2 \|W(t)\|_2=2\sqrt{2t}$, whereupon Lyapunov's norm inequality completes the argument).
  The bound $\P^*(X>x)<c\,x^{-1/2}$ now follows.
\end{proof}

For $1$-colour matching we have an upper bound in all dimensions.

\begin{thm}\label{dmom}
  Let $d\geq 1$ and $\gamma\in [-\infty,1)$ and let $R$ be a Poisson process of intensity $1$ on $\R^d$.  There exists $c=c(d,\gamma)>0$ such that for any invariant $\gamma$-minimal $1$-colour matching scheme we have
  $$\P^*(X>x)<c\,x^{-d},\qquad x>0.$$
\end{thm}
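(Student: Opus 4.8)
The plan is to use quasistability (\cref{quasi}) to show that points carrying a long edge must be spatially well separated, and then to convert this separation into the tail bound through the Palm relation together with an elementary packing estimate. The whole argument works uniformly in $d\geq 1$.

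First I fix $t>0$ and let $\kappa=\kappa(\gamma)$ be the constant from \cref{quasi}. Since the scheme is invariant and $\gamma$-minimal it is perfect by \cref{inv-perfect}, so $M(x)$ is defined for every $x\in R$. Call a point $x\in R$ \emph{bad} if $|x-M(x)|>\kappa t$. If $x$ and $y$ are two distinct bad points, then $|x-M(x)|\wedge|y-M(y)|>\kappa t$, and \cref{quasi} gives $\kappa t<|x-M(x)|\wedge|y-M(y)|\leq\kappa\,|x-y|$, whence $|x-y|>t$. Thus the bad points are pairwise at distance greater than $t$.

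Next I bound the number of bad points in a cube of side $t$. Because the bad points are $t$-separated, the open balls of radius $t/2$ centred at them are pairwise disjoint, and those centred at bad points lying in a fixed cube of side $t$ are all contained in the concentric cube of side $2t$. Comparing volumes, if $\omega_d$ denotes the volume of the unit ball then the number of such bad points is at most
\[
\frac{(2t)^d}{\omega_d\,(t/2)^d}=\frac{4^d}{\omega_d}=:C_d,
\]
a constant depending only on $d$, and this bound holds surely, for every realization.

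Finally I invoke the Palm relation. Since ``bad'' is a translation-equivariant property of the configuration viewed from a point, the process of bad points is invariant with intensity equal to $\P^*(\text{origin is bad})$; hence its expected number of points in a cube of side $t$ equals $t^d\,\P^*(\text{origin is bad})$, which by the previous step is at most $C_d$. The event that the origin is bad is exactly $\{X=|M^*(0)|>\kappa t\}$, so $\P^*(X>\kappa t)\leq C_d\,t^{-d}$, and substituting $x=\kappa t$ yields $\P^*(X>x)<c\,x^{-d}$ with $c=C_d\kappa^d$. The individual steps are routine; the only real content is the first observation, that quasistability converts ``having a long edge'' into a separation condition at the matching scale. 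The point to be careful about is to take the comparison region to have side equal to the matching scale $t$ rather than a fixed size: this makes the packing bound a pure dimensional constant, so that the $t^d$ normalisation coming from the Palm relation produces precisely the claimed $t^{-d}$ decay. Verifying measurability of the bad-point property and the applicability of the Palm identity is standard and presents no difficulty.
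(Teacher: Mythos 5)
Your proof is correct and follows essentially the same route as the paper: quasistability (\cref{quasi}) forces points with long edges to be well separated, and the Palm relation converts a deterministic count of such points in a region of side $t$ into the tail bound. The paper streamlines the middle step by observing that a ball of radius $t/2$ contains \emph{at most one} bad point (two bad points there would violate \eqref{kappa-ineq} directly), which avoids your packing estimate and yields a slightly smaller constant, but this is a cosmetic difference only.
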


\begin{proof}
  Let $\kappa=\kappa(\gamma)$ be the constant from \cref{quasi}.  Let $t>0$ and call a red or blue point $x$ \df{bad} if $|x-M(x)|>\kappa t$, and \df{good} otherwise. By \cref{quasi}, the ball $S_{t/2}=\{x\in\R^d :|x|<t/2\}$ contains at most one bad point.  Therefore, writing $\omega=\omega(d)$ for the volume of the unit ball,
  \begin{equation*}
  (t/2)^d \,\omega\, \P^*(X>\kappa t) =\E\#\bigl\{x\in R\cap S_{t/2}: x \text{ is bad}\bigr\}
  \leq 1. \qedhere
  \end{equation*}
\end{proof}

\begin{proof}[Proof of \cref{1coltail}]
Part (i) ($\gamma\geq 1-$) is a trivial consequence of the proof of
\cref{1colclass}(i).
The invariant matching scheme $M$ is an equal mixture of the two alternating matchings $M_+$ and $M_-$.  Therefore the same is true for the Palm version, and thus $X$ is a standard exponential variable.
Part (ii) ($\gamma<1$)
 is the $d=1$ case of \cref{dmom}.
\end{proof}

\section*{Acknowledgements}

We thank Maria Deijfen for many valuable conversations.  Alexander Holroyd thanks the University of Uppsala for a most enjoyable visit during which much of this work was carried out.


%
%
%
%
%
%
%
%
%
%
%
%
%
%
%
%
%

\bibliographystyle{abbrv}
\bibliography{mm}

\end{document}